\theoremstyle{plain}
\newtheorem{theorem}[equation]{Theorem}
\newtheorem*{theorem*}{Theorem}
\newtheorem{corollary}[equation]{Corollary}
\newtheorem{lemma}[equation]{Lemma}
\newtheorem*{fact*}{Fact}
\newtheorem*{question*}{Question}
\newtheorem{main}{Theorem}
\newtheorem{maintheorem}[main]{Theorem}
\theoremstyle{remark}
\newtheorem*{remark}{Remark}
\theoremstyle{definition}
\newtheorem{definition}[equation]{Definition}
\newtheorem*{convention}{Convention}
\newtheorem*{standing hypothesis}{Standing Hypothesis}
\newcommand{\thmref}[1]{Theorem~\ref{#1}}
\newcommand{\corref}[1]{Corollary~\ref{#1}}
\newcommand{\lemref}[1]{Lemma~\ref{#1}}
\newcommand{\defref}[1]{Definition~\ref{#1}}
\newcommand{\defn}[1]{\emph{#1}}
\newcommand{\N}{\mathbb{N}}
\newcommand{\Z}{\mathbb{Z}}
\newcommand{\R}{\mathbb{R}}
\renewcommand{\setminus}{\smallsetminus}
\DeclareMathOperator{\supp}{supp}
\DeclareMathOperator{\injrad}{injrad}
\DeclareMathOperator{\diam}{diam}
\DeclareMathOperator{\Isom}{Isom}
\newcommand{\set}[1]{\left\{#1\right\}}
\newcommand{\setp}[2]{\left\{#1 : #2\right\}}
\newcommand{\smallsetp}[2]{\{#1 : #2\}}
\newcommand{\family}[1]{\left(#1\right)}
\newcommand{\abs}[1]{\left| #1 \right|}
\newcommand{\norm}[1]{\left\| #1 \right\|}
\newcommand{\res}[1]{\vert_{#1}}
\newcommand{\cl}[1]{\overline{#1}}
\newcommand{\bd}{\partial}
\newcommand{\double}[1]{#1 \times #1}
\newcommand{\dbX}{\double{\bd X}}
\newcommand{\Reg}{\mathcal R}
\newcommand{\Zerowidth}{\mathcal{Z}}
\newcommand{\lmod}{\backslash}
\newcommand{\modgp}[2]{#2 \lmod #1}
\newcommand{\modG}[1]{\modgp{#1}{\Gamma}}
\DeclareMathOperator{\pr}{pr}
\DeclareMathOperator{\emap}{E}
\newcommand{\G}{\Gamma}
\newcommand{\g}{\gamma}
\DeclareMathOperator{\dT}{d_T}
\renewcommand{\epsilon}{\varepsilon}
\DeclareMathOperator{\width}{width}
\DeclareMathOperator{\CAT}{CAT}
\newcommand{\card}[1]{\# {#1}}
\newcommand{\mc}{\mathcal}
\newcommand{\pifp}{\ensuremath \pi_{\textit{fp}}} %
\newcommand{\piG}{\ensuremath \pi_{\modG{X}}^{X}}
\newcommand{\piGS}{\ensuremath \pi_{\modG{SX}}^{SX}}
\newcommand{\piGSi}{\ensuremath \piGS\phantom{}^{-1}}
\newcommand{\htop}{\ensuremath{h_{\mathrm{top}}}}
\DeclareMathOperator{\Ball}{B}
\DeclareMathOperator{\image}{im}
\newcommand{\eqrefstar}[1]{(\ref*{#1})}
\title[The unique measure of maximal entropy for a CAT(0) space]{The unique measure of maximal entropy for a compact rank one locally CAT(0) space}
\author{Russell Ricks}
\address{Binghamton University, Binghamton, New York, USA}
\email{ricks@math.binghamton.edu}
\thanks{
The author was partially supported by NSF RTG 1045119.}
\begin{document}

\begin{abstract}
Let $X$ be a compact, geodesically complete, locally CAT(0) space such that the universal cover admits a rank one axis.
We prove the Bowen-Margulis measure on the space of geodesics is the unique measure of maximal entropy for the geodesic flow, which has topological entropy equal to the critical exponent of the Poincar\'e series. \end{abstract}

\maketitle

\section{Introduction}

Let $\G$ be a group acting geometrically (that is: properly discontinuously, cocompactly, and isometrically) and freely on a proper, geodesically complete $\CAT(0)$ space $X$ with rank one axis.
The space of unit-speed parametrized geodesics, $SX$, has a natural geodesic flow. The quotient under $\G$ admits a Borel probability measure of full support called the Bowen-Margulis measure \cite{ricks-mixing}.
In this paper we show that the geodesic flow has topological entropy equal to the critical exponent of the Poincar\'e series, and that the Bowen-Margulis measure is (up to rescaling) the unique measure of maximal entropy for the geodesic flow.

\begin{maintheorem}				\label{main manning}
Let $\G$ be a group acting freely geometrically on a proper, geodesically complete $\CAT(0)$ space $X$.
The geodesic flow on $\modG{SX}$ has topological entropy equal to the critical exponent of the Poincar\'e series.
\end{maintheorem}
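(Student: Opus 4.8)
The plan is to prove the two inequalities $\htop \le \delta$ and $\htop \ge \delta$ separately, where $\delta$ denotes the critical exponent of the Poincar\'e series. First I would record the standard fact that, because $\G$ acts cocompactly, $\delta$ equals the exponential orbit-growth rate
\[
\delta = \limsup_{T \to \infty} \frac{1}{T} \log \#\set{g \in \G : d(\p, g\p) \le T}
\]
for any basepoint $\p \in X$; this is immediate from comparing the Poincar\'e series to its dyadic blocks. Throughout I would measure the geodesic flow with the dynamical metric $\dT$ obtained by lifting geodesics to $X$ and setting $\dT(\xi, \eta) = \max_{0 \le s \le T} d(\xi(s), \eta(s))$, and compute $\htop$ via $(T, \epsilon)$-separated sets.

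For the upper bound $\htop \le \delta$, I would run Manning's covering argument, using CAT(0) convexity in place of nonpositive curvature. Fix a compact fundamental domain $D$, so that every orbit of the geodesic flow has a lift $\xi$ with $\xi(0) \in D$; then $\xi(T) \in \Ball(\p, T + \diam D)$. Convexity of $s \mapsto d(\xi(s), \eta(s))$ along CAT(0) geodesics forces this distance to attain its maximum on $[0,T]$ at an endpoint, so a pair of segments is $(T,\epsilon)$-close as soon as the endpoint pairs $(\xi(0), \xi(T))$ and $(\eta(0), \eta(T))$ are $\epsilon$-close. Hence the number of $(T,\epsilon)$-separated geodesics is at most the product of the $\epsilon$-covering numbers of $D$ and of $\Ball(\p, T + \diam D)$. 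Cocompactness makes the latter comparable to the orbit count $\#\set{g : d(\p, g\p) \le T + C}$, so letting $T \to \infty$ and then $\epsilon \to 0$ yields $\htop \le \delta$.

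The lower bound $\htop \ge \delta$ is the crux, and here the rank one hypothesis is essential. The naive idea is to turn the $\approx e^{\delta T}$ orbit points $g\p$ in an annulus $\set{T - C \le d(\p, g\p) \le T}$ into that many $(T,\epsilon)$-separated segments $[\p, g\p]$; the obstruction is that two such segments can fellow-travel for the entire interval $[0,T]$ when they run into a flat, so distinct $g$ need not be $\dT$-separated. To defeat this I would work near a rank one axis $\axis$: because $\axis$ is contracting, geodesics whose forward endpoints lie in disjoint shadows seen from $\p$ diverge at a definite rate, and a shadow lemma for the Patterson-Sullivan measure $\mu_{\p}$ (of dimension $\delta$) shows that a set of endpoints of $\mu_{\p}$-measure $\asymp e^{-\delta T}$ produces segments that are mutually $(T,\epsilon)$-separated. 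Covering $\Limitset$ by $\asymp e^{\delta T}$ such shadows then furnishes $e^{(\delta - o(1))T}$ separated segments, giving $\htop \ge \delta$.

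I expect the separation estimate of the previous paragraph to be the main obstacle: controlling exactly how the presence of flats can cause fellow-traveling, and showing that the contracting behavior along the rank one axis confines this degeneracy to a $\mu_{\p}$-null, dynamically negligible set. Once the shadow lemma and the contraction estimate are in hand, the counting is routine. One could alternatively extract $\htop \ge \delta$ from the variational principle if the measure-theoretic entropy $h_{\mu_{\mathrm{BM}}} = \delta$ of the Bowen-Margulis measure were already known, but I would prefer the direct counting, so as to keep this theorem logically prior to the identification of the measure of maximal entropy.
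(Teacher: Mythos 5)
Your upper bound is essentially the paper's \lemref{htop le dG} (Manning's covering argument, with CAT(0) convexity pinning the maximum of $s \mapsto d(\xi(s),\eta(s))$ at the endpoints), and it is correct in outline. The one detail you elide is that the metric on $SX$ is $d_{SX}(v,w) = \sup_{t \in \R} e^{-\abs{t}} d_X(v(t),w(t))$, which sees the whole geodesic, not just $[0,T]$; so ``endpoint pairs $\epsilon$-close $\Rightarrow$ $(T,\epsilon)$-close'' is not literally true. The paper repairs this with \lemref{pre-5.2} and \corref{uniform compact-open constants}, spanning at the times $-a_\epsilon$ and $T+a_\epsilon$ rather than at $0$ and $T$; the passage between $SX$ and $\modG{SX}$ is \lemref{entropy of covers}.

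The genuine gap is in your lower bound, in two ways. First, Theorem~\ref{main manning} assumes no rank one axis---that hypothesis enters only in Theorem~\ref{main knieper}---so an argument resting on a rank one axis, Patterson--Sullivan shadows, and contraction estimates cannot prove the theorem as stated; indeed the paper's shadow lemmas are developed only under the rank one standing hypothesis (they use minimality of the $\G$-action on $\bd X$ and \lemref{Ballmann's lemma}). Second, the obstruction you set out to defeat does not exist, and recognizing this is precisely what makes the paper's proof elementary. Choose $\epsilon$ so that for every $\g \in \G$ either $\g x = x$ or $d(x,\g x) > 3\epsilon$ (proper discontinuity), and count orbit points in annuli of width $\epsilon$ rather than of fixed width $C$: by definition of $\delta_\G$ there are radii $r_k \to \infty$ with at least $e^{(\delta_\G - \alpha) r_k}$ orbit points in $\Ball(x,r_k+\epsilon) \setminus \Ball(x,r_k)$. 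Extend each segment from $x$ to an orbit point $y$ in this annulus to a full geodesic $v_{x,y} \in SX$ (this is where geodesic completeness is used). For distinct orbit points $y,y'$ we have $d(v_{x,y}(r_k),y) < \epsilon$ and $d(v_{x,y'}(r_k),y') < \epsilon$, hence
\[d\bigl(v_{x,y}(r_k),\,v_{x,y'}(r_k)\bigr) \ge d(y,y') - 2\epsilon > 3\epsilon - 2\epsilon = \epsilon,\]
so the two geodesics are forced apart at time $r_k$ no matter what flats they cross: $\epsilon$-fellow-traveling over all of $[0,r_k]$ is ruled out by the triangle inequality alone, not by any contraction property. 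This is exactly \lemref{dG le htop}, which holds for every proper, geodesically complete geodesic metric space with a properly discontinuous isometric action---no curvature, cocompactness, or rank assumption---and it keeps Theorem~\ref{main manning} logically prior to, and strictly more general than, the rank one theory, which is the priority you said you wanted to preserve.
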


\begin{maintheorem}				\label{main knieper}
Let $\G$ be a group acting freely geometrically on a proper, geodesically complete $\CAT(0)$ space $X$ with rank one axis.
The Bowen-Margulis measure on $\modG{SX}$ is (up to rescaling) the unique measure of maximal entropy for the geodesic flow on $\modG{SX}$.
\end{maintheorem}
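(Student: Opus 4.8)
The plan is to adapt Knieper's uniqueness argument for rank one manifolds of nonpositive curvature, with the Patterson--Sullivan and Bowen--Margulis constructions of \cite{ricks-mixing} in place of the Riemannian input. Write $m_{\mathrm{BM}}$ for the Bowen--Margulis measure on $\modG{SX}$ and $h$ for the critical exponent of the Poincar\'e series, so that $h = \htop$ by \thmref{main manning}. I would prove two things: that $h_{m_{\mathrm{BM}}} = h$, so $m_{\mathrm{BM}}$ is a measure of maximal entropy; and that any ergodic measure of maximal entropy $m$ equals $m_{\mathrm{BM}}$. Both follow from a uniform estimate on the $m_{\mathrm{BM}}$-measure of dynamical balls together with a comparison argument, so the heart of the matter is that estimate. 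By ergodicity of both measures, uniqueness reduces to showing $m \ll m_{\mathrm{BM}}$ with Radon--Nikodym density bounded above and below.

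First I would reduce to the regular locus. Genuine (hyperbolic) expansion occurs only along the zero-width geodesics $\Zerowidth$; every other geodesic bounds a flat strip, along which the flow is isometric in the transverse directions and so carries no entropy. A separated-set estimate should then show that the topological entropy of the flow restricted to the image of $SX \setminus \Zerowidth$ is strictly below $h$, forcing $m\big(\modG{\Zerowidth}\big) = 1$. The rank one axis hypothesis is essential here: it is what makes $\Zerowidth$ nonempty and the Patterson--Sullivan measures nonatomic of full support on the limit set, so that the hyperbolic locus truly dominates the dynamics.

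Next I would prove a uniform two-sided Gibbs estimate for $m_{\mathrm{BM}}$. On the regular locus $m_{\mathrm{BM}}$ carries a local product structure: in a flow box it is the product of the Patterson--Sullivan measures on the forward- and backward-endpoint factors with Lebesgue measure in the flow direction. Feeding this through the shadow lemma for the Patterson--Sullivan measures and the Busemann-cocycle transformation rule, I would establish that for each small $\epsilon > 0$ there is $C = C(\epsilon)$ with
\[
C^{-1} e^{-hT} \le m_{\mathrm{BM}}\big(\Ball(v, T, \epsilon)\big) \le C\, e^{-hT}
\]
for every regular $v$ and every $T > 0$, where $\Ball(v,T,\epsilon)$ denotes the Bowen $(T,\epsilon)$-ball. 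In particular, applying the Brin--Katok formula to $m_{\mathrm{BM}}$ itself yields $h_{m_{\mathrm{BM}}} = h$.

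Finally I would run the comparison. By Brin--Katok, for $m$-a.e.\ $v$ the quantity $-\tfrac1T \log m(\Ball(v,T,\epsilon))$ converges (after $T \to \infty$ and $\epsilon \to 0$) to $h_m = h$, while the Gibbs estimate pins the corresponding quantity for $m_{\mathrm{BM}}$ to $h$ with two-sided constants. A differentiation argument along Bowen balls, which form a Vitali basis on the regular locus thanks to the product structure, then shows $dm/dm_{\mathrm{BM}}$ is bounded above and below; flow-invariance and ergodicity (indeed mixing, from \cite{ricks-mixing}) force it to be constant, giving $m = m_{\mathrm{BM}}$ up to rescaling. The principal obstacle throughout is extracting \emph{uniform} contraction and expansion from the zero-width condition alone: with no smooth stable/unstable foliation or Anosov structure available, both the lower Gibbs bound and the concentration on $\Zerowidth$ must be derived from CAT(0) convexity and geodesic completeness, uniformly over the noncompact regular locus.
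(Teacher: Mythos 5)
Your outline founders on the central estimate it relies on: the \emph{uniform lower} Gibbs bound $m_{\mathrm{BM}}(\Ball(v,T,\epsilon)) \ge C(\epsilon)^{-1}e^{-hT}$ at arbitrarily small scales $\epsilon$, uniformly over regular $v$ and all $T$. This is not available here, and its failure mode is exactly the presence of flat strips: a zero-width geodesic can track thin neighborhoods of positive-width strips for a definite fraction of the time $[0,T]$, and there the Patterson--Sullivan mass of $\epsilon$-shadows is not uniformly comparable to $e^{-h\,(\mathrm{distance})}$. The shadow estimates the construction actually provides (\lemref{knieper98 2.3 5} and \lemref{knieper98 2.3 6}) are two-sided only at one fixed \emph{large} scale $R_0$ produced by compactness and minimality; the lower bound does not shrink to scale $\epsilon$. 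This asymmetry is the whole reason the paper's argument is shaped the way it is: the upper bound at small scales does hold (it is \lemref{knieper98 2.5}, and it alone gives $h_{m_\G}(\phi)\ge\delta_\G$, so maximality is fine), but for the lower bound the paper works with the large-scale sets $D(x_i,R',R)$, $R\ge R_0$, getting $m_\G(L_i^n)\ge c\,e^{-2hn}$ only at that scale (Lemmas \ref{knieper98 5.1} and \ref{knieper98 5.3}), and then bridges large to small scales not by a Gibbs estimate but by the bounded-multiplicity counting lemmas (\lemref{knieper98 5.2} and \lemref{knieper98 5.4}). A genuine small-scale Gibbs property for the Bowen--Margulis measure in rank one is a well-known delicate point that neither Knieper's paper nor this one establishes, so you cannot take it as an input.

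Two further steps are also gaps. First, you force any measure of maximal entropy onto the zero-width locus by asserting that the flow restricted to the positive-width locus has topological entropy strictly below $h$. No ``separated-set estimate'' for this is known at this level of generality; in fact such an entropy gap is most naturally a \emph{corollary} of the uniqueness theorem (via $h$-expansivity, \lemref{knieper98 3.3}, finite dimensionality, \corref{SXG is fin-dim}, and upper semicontinuity of the entropy map), so as written this step is close to circular. The paper never proves or needs it: it takes $\nu$ mutually singular to $m_\G$ (reduction justified by ergodicity of $m_\G$), picks $\Omega$ with $m_\G(\Omega)=0$, $\nu(\Omega)=1$, and merely \emph{enlarges} $\Omega$ to contain all positive-width geodesics, which is legitimate because $m$ charges only zero-width ones. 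Second, even granting your two-sided Gibbs bound, Brin--Katok gives only the pointwise, non-uniform asymptotic $-\frac{1}{T}\log\nu(\Ball(v,T,\epsilon))\to h_\nu$ for $\nu$-a.e.\ $v$; comparing with the Gibbs bound then yields only $\nu(\Ball(v,T,\epsilon))\le C e^{\delta T} m_{\mathrm{BM}}(\Ball(v,T,\epsilon))$ for each $\delta>0$, and this exponential error defeats any Vitali differentiation argument --- you cannot conclude a bounded Radon--Nikodym derivative, and the standard separated-set comparison gives only $h_\nu\le h$, not strictness. The mechanism that actually produces $h_\nu<h$ in the paper is the dynamical approximation of $\Omega$ by unions $C_n$ of the sets $\phi^n(L_i^n)$ (\lemref{knieper98 5.6}, which uses flat-strip rigidity of zero-width geodesics), followed by splitting the entropy sum over $\mc{B}^n$ along $C_n$ and observing $b_n\log m_\G(C_n)\to-\infty$; your proposal has no substitute for this step.
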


These two theorems generalize well-known results of Manning \cite{manning} and Knieper \cite{knieper98}, respectively, for nonpositively-curved Riemannian manifolds.
The techniques used here are essentially those of the original authors, but the details of the adaptations are finicky enough that we felt they merited being written up properly.
This paper provides those details.

We mention two additional points of interest in the current work.
First, Manning proved an inequality for the topological entropy for all compact Riemannian manifolds; we extend this to all proper, geodesically complete, geodesic metric spaces (\lemref{dG le htop}).
Second, in order to adapt Knieper's proof of the uniqueness of the measure of maximal entropy, we show that $\modG{SX}$ is finite dimensional; this follows from showing that $X$ is finite dimensional.
In particular, every closed ball in a proper, geodesically complete $\CAT(\kappa)$ space ($\kappa \in \R$) is finite dimensional (\lemref{X is fin-dim}).
\footnote{\lemref{X is fin-dim} also follows from [independent work by Lytchak and Nagano](ln19).}

\section{Topological entropy of the geodesic flow}

\subsection{The space of geodesics}

Let $X$ be a proper metric space (i.e.~ all closed metric balls are compact).
A \defn{geodesic} in $X$ is an isometric embedding $v \colon \R \to X$.
A \defn{geodesic segment} is an isometric embedding of a compact interval, and a \defn{geodesic ray} is an isometric embedding of $[0, \infty)$.
We call the space $X$ \defn{geodesic} if for every pair of distinct $x,y \in X$ there is a geodesic segment in $X$ with endpoints $x$ and $y$.
We call $X$ \defn{geodesically complete} if every geodesic segment in $X$ can be extended to a geodesic $v \colon \R \to X$.

Denote by $SX$ the space of all geodesics $\R \to X$, where $SX$ is endowed with the compact-open topology.
This space is metrizable with metric
\begin{equation*}
d_{SX}(v,w) = \sup_{t \in \R} e^{-\abs{t}} d_X(v(t), w(t)).
\end{equation*}
Under this metric, $SX$ is a proper metric space; moreover, the \defn{footpoint projection} $\pifp \colon SX \to X$ given by $\pifp(v) = v(0)$ is $1$-Lipschitz.
Note that $\pifp$ is a proper map by the Arzel\`a-Ascoli Theorem since $X$ is proper.
The \defn{geodesic flow} $g^t$ on $SX$, defined by $(g^t v)(r) = v(t + r)$, is $e^{\abs{t}}$-Lipschitz for all $t \in \R$.

The following lemma gives uniform control over the compact-open topology.

\begin{lemma}					\label{pre-5.2}
For every $\delta > 0$ there exists $r \ge 0$ such that for every $L$-Lipschitz map $f \colon \R \to \R$,
if $e^{-\abs{t}} f(t) \le \delta$ for all $t \in [-r,r]$,
then $e^{-\abs{t}} f(t) \le \delta$ for all $t \in \R$.
\end{lemma}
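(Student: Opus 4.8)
The plan is to show that the hypothesis on the compact window $[-r,r]$ propagates outward once $r$ is taken large enough relative to $L/\delta$. Since the hypothesis, the conclusion, and the Lipschitz condition are all invariant under the substitution $t \mapsto -t$ (replacing $f$ by the $L$-Lipschitz map $t \mapsto f(-t)$, which still satisfies the hypothesis on $[-r,r]$), it suffices to prove that $e^{-t} f(t) \le \delta$ for all $t \ge r$; the range $t \le -r$ then follows by applying this to $f(-\cdot)$, and $t \in [-r,r]$ is the hypothesis. Accordingly I would choose $r = \max\set{0, \ln(L/\delta)}$, so that $\delta e^{r} \ge L$.

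To handle a fixed $t \ge r$, I would combine two elementary estimates. Evaluating the hypothesis at the endpoint gives $f(r) \le \delta e^{r}$, and the $L$-Lipschitz bound gives $f(t) \le f(r) + L(t - r) \le \delta e^{r} + L(t-r)$. On the other hand, convexity of the exponential (its graph lies above the tangent line at $r$) yields $e^{t} - e^{r} \ge e^{r}(t - r)$, hence
\begin{equation*}
\delta e^{t} \ge \delta e^{r} + \delta e^{r}(t - r) \ge \delta e^{r} + L(t-r) \ge f(t),
\end{equation*}
where the middle inequality uses $\delta e^{r} \ge L$ together with $t - r \ge 0$. Dividing by $e^{t}$ gives $e^{-t} f(t) \le \delta$, as required. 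This argument uses only the Lipschitz bound and convexity, so it invokes no differentiability of $f$; note also that wherever $f(t) \le 0$ the conclusion is automatic, so nonnegativity of $f$ is never needed.

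There is no serious obstacle here; the one point that must be respected is that the threshold $r$ genuinely depends on the Lipschitz constant $L$ (as well as on $\delta$), since the outward propagation can only begin once $\delta e^{r}$ dominates the slope $L$. Indeed, for any prescribed $r$ one can take $f(t) = \delta e^{\abs{t}}$ on $[-r,r]$ and extend it beyond $\pm r$ with slope $L > \delta e^{r}$: the resulting map satisfies the hypothesis yet has $e^{-t}f(t)$ strictly increasing just past $r$, confirming that $r \gtrsim \ln(L/\delta)$ is the correct order of magnitude. In the intended application $f(t) = d_X(v(t), w(t))$ for unit-speed geodesics $v,w$, which is $2$-Lipschitz, so one may simply fix $L = 2$ throughout.
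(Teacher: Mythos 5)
Your proof is correct and is essentially the paper's argument: you choose the same threshold $r = \max\set{0, \log(L/\delta)}$ and use the same two estimates (the Lipschitz growth bound against the exponential growth bound $e^{t}-e^{r} \ge e^{r}(t-r)$), merely phrased directly via the tangent-line inequality where the paper argues by contradiction using an integral bound. The symmetry reduction to $t \ge r$ matches the paper's ``similarly for $t_0 < -r$.''
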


\begin{proof}
Let $r = \max \set{0, \log(\frac{L}{\delta})}$.
Suppose that $t_0 > r$ satisfies $f(t_0) > \delta e^{\abs{t_0}}$ but $f(t) \le \delta e^{\abs{t}}$ for all $t \in [-r,r]$.
Then
\[f(t_0) - f(r)
> \delta (e^{t_0} - e^{t})
= \delta \int_{r}^{t_0} e^t \, dt
> \delta e^{r} (t_0 - r)
\ge L (t_0 - r),\]
contradicting our hypothesis that $f$ is $L$-Lipschitz.
Similarly for $t_0 < -r$.
\end{proof}

\begin{corollary}				\label{uniform compact-open constants}
For every $\delta > 0$ there is some $r > 0$ such that for all $v,w \in SX$, if $d_{X}(v(t),w(t)) < \delta$ for all $t \in [-r,r]$, then $d_{SX}(v,w) < \delta$.
\end{corollary}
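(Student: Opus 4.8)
The plan is to deduce the corollary from \lemref{pre-5.2} by taking $f(t) = d_X(v(t), w(t))$. First I would check that $f$ is $2$-Lipschitz: since $v$ and $w$ are isometric embeddings of $\R$, we have $d_X(v(s), v(t)) = \abs{s - t}$ and $d_X(w(s), w(t)) = \abs{s - t}$, so two applications of the triangle inequality give $\abs{f(s) - f(t)} \le d_X(v(s), v(t)) + d_X(w(s), w(t)) = 2 \abs{s - t}$. Thus $f$ is $L$-Lipschitz with $L = 2$, \emph{independent of $v$ and $w$}, which is exactly what forces the resulting $r$ to be uniform over all pairs.

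Next I would apply \lemref{pre-5.2} to the given $\delta$ with $L = 2$, obtaining $r \ge 0$ (enlarge it to $r > 0$ if necessary, which only strengthens the hypothesis of the corollary and so is harmless). The assumption $d_X(v(t), w(t)) < \delta$ on $[-r, r]$ gives $e^{-\abs{t}} f(t) \le f(t) < \delta$ there, and in particular $e^{-\abs{t}} f(t) \le \delta$ on $[-r,r]$; the lemma then yields $e^{-\abs{t}} f(t) \le \delta$ for all $t \in \R$. Since $d_{SX}(v, w) = \sup_{t} e^{-\abs{t}} f(t)$, this already delivers the nonstrict bound $d_{SX}(v,w) \le \delta$.

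The one genuine point is upgrading $\le \delta$ to the strict inequality $< \delta$ demanded by the statement. Here I would use that $g(t) := e^{-\abs{t}} f(t)$ tends to $0$ as $\abs{t} \to \infty$ (the exponential weight beats the at-most-linear growth of the Lipschitz function $f$), so the supremum is attained at some point $t^*$. If $\abs{t^*} \le r$, strictness is immediate from the strict hypothesis, since $g(t^*) \le f(t^*) < \delta$. If $\abs{t^*} > r$, say $t^* > r$, then comparing $g(t^*)$ with $g$ at nearby points $t^* - h$ and invoking the $2$-Lipschitz bound forces $f(t^*) \le 2$; hence $g(t^*) = e^{-t^*} f(t^*) \le 2 e^{-t^*} < \delta$, using $t^* > r$ together with the lemma's choice $r = \max\set{0, \log(2/\delta)}$. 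I expect this strictness upgrade to be the only mildly delicate step; the reduction to \lemref{pre-5.2} itself is routine once the Lipschitz constant is pinned down.
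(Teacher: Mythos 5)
Your proposal is correct and follows the paper's intended route: the corollary is stated as an immediate consequence of \lemref{pre-5.2}, applied to the function $f(t) = d_X(v(t),w(t))$, which is $2$-Lipschitz because $v$ and $w$ are isometric embeddings, so the resulting $r$ is uniform over all pairs. Your extra step upgrading $d_{SX}(v,w) \le \delta$ to the strict inequality $d_{SX}(v,w) < \delta$ (via attainment of the supremum and the first-order estimate $f(t^*) \le 2$ at an exterior maximum) is sound and addresses a strict-versus-nonstrict mismatch that the paper, which records no proof of the corollary, silently glosses over.
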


Now let $\G$ be a group acting properly discontinuously, by isometries on $X$.
The $\G$-action on $X$ naturally induces a properly discontinuous, isometric action on $SX$, which is cocompact if and only if the action on $X$ is.
Since the $\G$-action on $SX$ commutes with the geodesic flow $g^t$ on $SX$, $g^t$ descends to a flow $g^t_\G$ on the quotient $\modG{SX}$.
We will write $\piGS \colon SX \to \modG{SX}$ for the canonical projection map.

\subsection{Topological entropy}

We now define topological entropy and mention some basic properties.
A good reference is \cite{walters}. 
Let $X$ be a metric space and $f \colon X \to X$ be continuous.

For $Y \subseteq X$, $n \in \N$, and $\epsilon > 0$, we say a subset $A \subseteq Y$ \defn{$(n,\epsilon)$-spans} $Y$ (with respect to $f$) if for all $y \in Y$ there exists $a \in A$ such that for all $i$ with $0 \le i < n$, we have $d(f^i(y),f^i(a)) \le \epsilon$.
Let $r_n(f,Y,\epsilon)$ be the minimum number of elements in an $(n,\epsilon)$-spanning set of $Y$ under $f$, and define the \defn{topological entropy}
\begin{gather*}
\htop(f,Y) = \lim_{\epsilon \to 0^+}
\limsup_{n \to \infty} \frac{1}{n} \log r_n(f,Y,\epsilon) \\
\text{and} \quad
\htop(f) = \sup \setp{\htop(f,K)}{K \subseteq Z \text{ compact}}.
\end{gather*}

It is a standard fact (see e.g.~ \cite[Corollary 7.5.1]{walters}) that when $f$ is uniformly continuous, we may use arbitrarily small compact sets $K$ when calculating topological entropy; that is, for all $\delta > 0$ we have
\[\htop(f) = \sup \setp{\htop(f,K)}{K \subseteq Z \text{ compact and } \diam K < \delta}.\]

It is often useful to use the notion of a separated set, which is in some sense dual to the notion of a spanning set.
For $n \in \N$ and $\epsilon > 0$, we say a subset $A \subseteq X$ \defn{$(n,\epsilon)$-separated} (with respect to $f$) if for all distinct $x,y \in A$, some $i$ with $0 \le i < n$ satisfies $d(f^i(x),f^i(y)) > \epsilon$.
We call a set $A \subseteq X$ \defn{$\epsilon$-separated} if it is $(1,\epsilon)$-separated, i.e.~ $d(x,y) > \epsilon$ for all distinct $x,y \in A$; likewise a set $A \subseteq Y$ \defn{$\epsilon$-spans} $Y$ if $d(y,A) \le \epsilon$ for all $y \in Y$.
Write $s_n(f,Y,\epsilon)$ for the maximum number of elements in an $(n,\epsilon)$-separated subset of $Y$ under $f$.
Then
\begin{equation*}
r_n(f,\epsilon,Y)
\le s_n(f,\epsilon,Y)
\le r_n(f,\frac{\epsilon}{2},Y)
\end{equation*}
and therefore
\begin{equation*}
\htop(f,Y)
:= \lim_{\epsilon \to 0^+}
\limsup_{n \to \infty} \frac{1}{n} \log r_n(f,Y,\epsilon)
= \lim_{\epsilon \to 0^+}
\limsup_{n \to \infty} \frac{1}{n} \log s_n(f,Y,\epsilon).
\end{equation*}
Thus we could use either spanning sets or separated sets in the definition of $\htop(f)$.

It is well known (see e.g.~ \cite[Theorem 7.10]{walters})
that when $f \colon X \to X$ is uniformly continuous, then $\htop(f^k) = k \htop(f^k)$ for all $k > 0$.
Thus, for a continuous flow $\phi = (\phi^t)_{t \in \R} \colon X \to X$, we define the \defn{topological entropy} of $\phi$ to be the topological entropy of the time-one map $\phi^1$, i.e.~ $\htop(\phi) := \htop(\phi^1)$.
Then $\htop(\phi^r) = r \htop(\phi)$ for all $r > 0$.
We remark that although in general $\htop(\phi^{-1}) \neq \htop(\phi)$,
when $\phi$ is the geodesic flow on some $SX$ then $\htop(\phi^r) = \abs{r} \htop(\phi^r)$ for all $r \in \R$.

\begin{definition}
Let $\G$ be a group acting properly discontinuously, isometrically on a proper metric space $X$.
Let $\piG \colon X \to \modG{X}$ be the canonical projection.
Define \begin{equation*}
\injrad(X,\G) = \inf_{x \in X} \sup \smallsetp{r \ge 0}{\piG \res{\Ball(x,r)} \text{ is injective}}.
\end{equation*}
Note if $x,y \in X$ have $d(x,y) < \injrad(X,\G)$, then $d(\piGS(x),\piGS(y)) = d(x,y)$.
\end{definition}

The next lemma follows from \cite[Theorem 8.12]{walters}.

\begin{lemma}					\label{entropy of covers}
Let $\G$ act isometrically on a proper metric space $X$, and let $f \colon X \to X$ be a $\G$-equivariant and uniformly continuous map.
If $\injrad(X,\G) > 0$, then the factor map $f_\G \colon \modG{X} \to \modG{X}$ satisfies $\htop(f_\G) = \htop(f)$.
\end{lemma}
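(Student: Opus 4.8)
The plan is to prove the two inequalities $\htop(f_\G) \le \htop(f)$ and $\htop(f) \le \htop(f_\G)$ separately; the first is soft, while the second is where the hypothesis $\injrad(X,\G) > 0$ does the work. For the easy inequality, observe that $\piG$ is $1$-Lipschitz, since it is a quotient by isometries and hence $d(\piG x, \piG y) = \inf_\g d(x, \g y) \le d(x,y)$, and that it intertwines the dynamics, $\piG \circ f = f_\G \circ \piG$. Consequently any $(n,\epsilon)$-spanning set of a compact $K \subseteq X$ projects under $\piG$ to an $(n,\epsilon)$-spanning set of $\piG(K)$, so $r_n(f_\G, \piG K, \epsilon) \le r_n(f, K, \epsilon)$. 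Because every compact $L \subseteq \modG{X}$ equals $\piG(K)$ for some compact $K$ (cover $L$ by finitely many evenly covered balls and lift each), taking logarithms and limits gives $\htop(f_\G, L) \le \htop(f)$ for all such $L$, hence $\htop(f_\G) \le \htop(f)$.

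For the main inequality, fix a compact $K \subseteq X$. Using uniform continuity of $f$, choose $\epsilon \in (0, \injrad(X,\G))$ so small that $d(p,q) \le \epsilon/2$ implies $d(f(p), f(q)) < \injrad(X,\G)$. Let $B \subseteq \piG(K)$ be a minimal $(n,\epsilon/2)$-spanning set, so that $\card B = r_n(f_\G, \piG K, \epsilon/2)$. Given $x \in K$, pick $b \in B$ with $d(f_\G^i \piG x, f_\G^i b) \le \epsilon/2$ for $0 \le i < n$, and let $\tilde b$ be the unique lift of $b$ with $d(x, \tilde b) \le \epsilon/2$ (existence and uniqueness of this lift follow from positivity of $\injrad(X,\G)$). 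The forced orbit $\family{f^i \tilde b}$ projects to $\family{f_\G^i b}$, and I claim $d(f^i x, f^i \tilde b) \le \epsilon/2$ for all $i < n$: this holds at $i=0$, and if it holds at $i$, then the choice of $\epsilon$ gives $d(f^{i+1}x, f^{i+1}\tilde b) < \injrad(X,\G)$, so the remark following the definition of $\injrad$ forces $d(f^{i+1}x, f^{i+1}\tilde b) = d(f_\G^{i+1}\piG x, f_\G^{i+1} b) \le \epsilon/2$. Thus each such $\tilde b$ lies in the compact set $Q := \cl{\setp{y}{d(y,K) \le \epsilon/2}}$, and its $f$-orbit $(n,\epsilon)$-spans $x$; the set $\tilde B$ of all lifts lying in $Q$ of all $b \in B$ is therefore an $(n,\epsilon)$-spanning set of $K$ for $f$.

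To conclude, note that the number of lifts of a single $b$ lying in $Q$ is at most $C := \card{\setp{\g \in \G}{\g Q \cap Q \neq \emptyset}}$, which is finite by proper discontinuity and independent of both $b$ and $n$. Hence $r_n(f, K, \epsilon) \le \card{\tilde B} \le C \cdot r_n(f_\G, \piG K, \epsilon/2)$. Applying $\frac1n \log$ and then $\limsup_{n}$ makes the constant $C$ disappear, so $\htop(f, K) \le \htop(f_\G, \piG K) \le \htop(f_\G)$ after letting $\epsilon \to 0^+$; taking the supremum over compact $K$ yields $\htop(f) \le \htop(f_\G)$, completing the proof.

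I expect the inductive stay-close claim to be the main obstacle: a priori the forced lift $f^i \tilde b$ could jump to a distant sheet of the covering. The combination of $\injrad(X,\G) > 0$ with uniform continuity of $f$ is precisely what prevents this, by ensuring that at each step the two lifts remain within the injectivity radius, where $\piG$ is a local isometry. The secondary point requiring care is that the fiber-count $C$ be uniform in $n$, so that it contributes nothing to the exponential growth rate; this is exactly where proper discontinuity — the finiteness of $\setp{\g \in \G}{\g Q \cap Q \neq \emptyset}$ — is used.
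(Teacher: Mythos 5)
Your proof is correct, but it is not the paper's route: the paper disposes of this lemma in a single line, citing Theorem~8.12 of Walters' book, which is the standard result on invariance of topological entropy under uniformly-locally-isometric covering maps. What you have done is essentially reprove that cited theorem in the present setting: the easy inequality $\htop(f_\G)\le\htop(f)$ by pushing spanning sets down through the $1$-Lipschitz projection, and the hard inequality by lifting spanning sets, where your inductive ``stay-close'' argument (uniform continuity keeps consecutive images within $\injrad(X,\G)$, and the remark after the definition of $\injrad$ then converts the quotient estimate back into an estimate upstairs) together with a uniform bound on the number of lifts meeting a fixed compact set gives $r_n(f,K,\epsilon)\le C\cdot r_n(f_\G,\piG K,\frac{\epsilon}{2})$; the constant $C$ is killed by $\frac{1}{n}\log$. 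This is exactly how the cited theorem is proved, so the mathematical content agrees; your version buys self-containedness and an explicit verification that the quotient map $\piG$ really satisfies the covering hypotheses under the assumption $\injrad(X,\G)>0$, at the cost of length. Two small points to tighten: (i) under the paper's definition an $(n,\epsilon)$-spanning set of $K$ must be a \emph{subset} of $K$, and your $\tilde B$ lies only in $Q$; replace each lift $\tilde b$ that is actually used by a point of $K$ within $\frac{\epsilon}{2}$ of it in the $d_n$-sense, which costs a factor of $2$ in $\epsilon$ and nothing in the limit $\epsilon\to 0^+$; (ii) proper discontinuity is not literally among the lemma's hypotheses (it enters only through the paper's definition of $\injrad$), but you do not need it: positive injectivity radius forces every $\G$-orbit to be $\injrad(X,\G)$-separated, so properness of $X$ alone bounds the number of lifts of $b$ lying in your compact set $Q$, uniformly in $b$ and $n$.
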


Thus the geodesic flow $g^t$ on $SX$ for a proper, geodesically complete, geodesic metric space $X$, under an isometric action by $\G$ with $\injrad(X,\G) > 0$, must satisfy
\[\htop(g^t) = \htop(g_\G^t).\]

\subsection{A general inequality}

Let $\G$ be a group acting properly discontinuously and isometrically on a proper metric space $X$.
The critical exponent
\[\delta_\G = \inf \smallsetp{s \ge 0}{\sum_{\g \in \G} e^{-s d(p, \g q)} < \infty}\]
of the Poincar\'e series for $\G$ does not depend on choice of $p$ or $q$.
We remark that $\delta_\G < \infty$ whenever $\G$ is finitely generated.

\begin{lemma}	[cf.~ Theorem 1 of \cite{manning}]	\label{dG le htop}
Let $\G$ be a group acting properly discontinuously and isometrically on a proper, geodesically complete, geodesic metric space $X$.
Then the geodesic flow $\phi = (\phi^t)_{t \in \R} = (g^t)_{t \in \R}$ on $SX$ satisfies $\htop(\phi) \ge \delta_\G$.
\end{lemma}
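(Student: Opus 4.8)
The plan is to realize the exponential orbit-growth rate as a lower bound on the number of dynamically separated geodesics lying in a single compact subset of $SX$. I would first recast the critical exponent as an orbit-counting rate: fixing a basepoint $p$ and writing $N(T) = \card{\setp{\g \in \G}{d(p,\g p) \le T}}$ (finite, since the action is properly discontinuous on a proper space), one has $\delta_\G = \limsup_{T \to \infty} \frac{1}{T}\log N(T)$. This follows by comparing the Poincaré series with $\sum_k N(k) e^{-sk}$ and using that a nonnegative series and its sequence of partial sums have the same abscissa of convergence. Next I attach a geodesic to each orbit point: for $\g$ with $t_\g := d(p,\g p) \le T$, geodesicity gives a segment from $p$ to $\g p$, and geodesic completeness extends it to a complete geodesic $v_\g \colon \R \to X$ with $v_\g(0) = p$ and $v_\g(t_\g) = \g p$. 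Every $v_\g$ lies in $K := \pifp^{-1}(p)$, which is compact because $\pifp$ is proper. Since $\htop(\phi) = \htop(g^1) \ge \htop(g^1, K)$, it suffices to bound below the maximal size $s_n$ of an $(n,\epsilon)$-separated subset of $K$ under $g^1$.

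The heart of the argument is a multiplicity estimate. Fix $\epsilon > 0$ and take $n \ge T + 1$. If $v_\g$ and $v_{\g'}$ are \emph{not} $(n,\epsilon)$-separated, then, reading off the $t = 0$ term of the supremum defining $d_{SX}$, we get $d_X(v_\g(i), v_{\g'}(i)) \le d_{SX}(g^i v_\g, g^i v_{\g'}) \le \epsilon$ for all integers $0 \le i < n$. Taking $i = \floor{t_{\g'}} \in \set{0, \dots, n-1}$ and using $v_{\g'}(t_{\g'}) = \g' p$ yields $d_X(\g' p, v_\g(i)) \le 1 + \epsilon$, so $\g' p$ lies in the ball of radius $1+\epsilon$ about the fixed point $v_\g(i)$. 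Proper discontinuity bounds the number of orbit points in any ball of this radius by the constant $C := \card{\setp{\eta \in \G}{d(\eta p, p) \le 2(1+\epsilon)}}$, uniformly in the center. Summing over the $n$ possible values of $i$, each $v_\g$ fails to be $(n,\epsilon)$-separated from at most $nC$ of the $v_{\g'}$.

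To conclude, let $A \subseteq \set{v_\g}$ be a maximal $(n,\epsilon)$-separated subset. By maximality every $v_\g$ fails to be separated from some element of $A$, and each element of $A$ has at most $nC$ such partners, so $N(T) \le nC \cdot \abs{A} \le nC \cdot s_n(g^1, K, \epsilon)$. Hence $\frac{1}{n}\log s_n \ge \frac{1}{n}\log N(T) - \frac{1}{n}\log(nC)$. Choosing $T_k \to \infty$ realizing the orbit-growth rate and $n_k = \lceil T_k \rceil + 1 \sim T_k$, the subtracted term is $O(\tfrac{\log n_k}{n_k}) \to 0$, so $\limsup_n \frac{1}{n}\log s_n(g^1, K, \epsilon) \ge \delta_\G$ for every $\epsilon > 0$, whence $\htop(g^1, K) \ge \delta_\G$ and therefore $\htop(\phi) \ge \delta_\G$.

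The main obstacle is precisely this multiplicity bound, and the key realization is that it need only be \emph{subexponential} in $n$. Because the action is not assumed free, distinct orbit points, and hence their geodesics, can be arbitrarily close, so no honest $\epsilon$-separation of the whole family $\set{v_\g}$ is available and one cannot simply identify $s_n$ with $N(T)$. What rescues the count is that sorting the endpoints $\g' p$ by the integer $\floor{t_{\g'}}$ confines the ambiguity to $n$ balls of fixed radius, each carrying boundedly many orbit points by proper discontinuity; the resulting factor $nC$ is harmless once one applies $\frac{1}{n}\log$ and lets $n \to \infty$.
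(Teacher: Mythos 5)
Your proof is correct, and while it follows the same Manning-style skeleton as the paper's argument---geodesics from a fixed basepoint to orbit points, all lying in the compact fiber $\pifp^{-1}(p)$, with proper discontinuity controlling coincidences---the counting mechanism is genuinely different. The paper counts orbit points only in a thin annulus $S(r_k) = \Ball(x, r_k+\epsilon) \setminus \Ball(x, r_k)$ and shrinks $\epsilon$ in advance so that distinct orbit points are more than $3\epsilon$ apart; since all endpoints then lie at essentially the same distance from the basepoint, the associated geodesics form an honestly $(r_k,\epsilon)$-separated set, and the separated-set count is just the annulus count. You instead count the whole ball via $N(T)$, keep $\epsilon$ arbitrary, and replace exact separation by a multiplicity bound: binning endpoints by $\lfloor t_{\g'} \rfloor$ and using the coset argument, each geodesic fails to be separated from at most $nC$ others, so a maximal separated subset still has size at least $N(T)/(nC)$, and the polynomial factor dies under $\frac{1}{n}\log$. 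Your route buys three things: you work directly with the time-one map and integer times (the paper needs its device of taking $r_k$ to be a multiple of $\epsilon$ to reconcile flow-time separation with the discrete-time definition of entropy), you never have to adapt $\epsilon$ to the local geometry of the orbit, and the multiplicity bound transparently absorbs stabilizers and coincident orbit points, which the paper's bijection between geodesics and orbit points quietly elides. The paper's route buys exact separation with no correction factor and a shorter count. One small gloss in your write-up: for stabilizer elements ($\g p = p$) there is no segment from $p$ to $\g p$; either discard these finitely many $\g$ or assign them an arbitrary geodesic through $p$---your multiplicity bound covers them either way.
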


In particular, if $\injrad(X,\G) > 0$ then the factor flow $\phi_\G$ on $\modG{SX}$ satisfies
\[\htop(\phi_\G) = \htop(\phi) \ge \delta_\G.\]

\begin{proof}
The case $\delta_\G = 0$ is trivial, so assume $\delta_\G > 0$.
Fix $x \in X$, and find $\epsilon > 0$ small enough that for all $\g \in \G$, either $\g x = x$ or $d(x, \g x) > 3 \epsilon$.
Let $\alpha > 0$ be arbitrary.
For each $r > 0$, let $S(r) = \Ball(x, r + \epsilon) \setminus \Ball(x, r)$ and $V_S(r) = \card{\setp{\g \in \G}{\g x \in S(r)}}$.
It follows from the definition of $\delta_\G$ that there is a sequence $r_k \to \infty$ such that $V_S(r_k) \ge e^{(\delta_\G - \alpha)r_k}$ for all $k$; we may assume each $r_k = \epsilon n_k$ for some $n_k \in \N$.
Now for each point $y \in S(r_k) \cap \G x$ choose $v_{x,y} \in SX$ such that $v_{x,y}(0) = x$ and $v_{x,y}(d(x,y)) = y$.
Then the set $A_{r_k} := \setp{v_{x,y}}{y \in S(r_k) \cap \G x}$ is $(r_k,\epsilon)$-separated because each pair of distinct $v_{x,y}, v_{x,y'} \in A_{r_k}$ has $d(g^{r_k} v_{x,y}, g^{r_k} v_{x,y'}) \ge d(v_{x,y}(r_k), v_{x,y'}(r_k)) \ge d(y, y') - d(y, v_{x,y}(r_k)) - d(y', v_{x,y'}(r_k)) > 3\epsilon - \epsilon - \epsilon = \epsilon$.
Also, $A_{r_k} \subseteq \pifp^{-1} \set{x}$, which is compact because $X$ is proper.
Thus $\htop(\phi) \ge \limsup_{r_k \to \infty} \frac{1}{r_k} \log \card{A_{r_k}} = \limsup_{k \to \infty} r_k^{-1} \log V_S(r_k) \ge \delta_\G - \alpha$.
Since $\alpha > 0$ was arbitrary, $\htop(\phi) \ge \delta_\G$.
\end{proof}

\subsection{Nonpositive curvature}

A metric space $X$ is called \defn{CAT(0)} if $X$ is geodesic and satisfies the \defn{CAT(0) inequality}: for every triple of distinct points $x,y,z \in X$, points on the geodesic triangle $\triangle(x,y,z)$ are no further apart than the corresponding points on a triangle in Euclidean $\R^2$ with the same edge lengths.
This generalizes nonpositive curvature from Riemannian manifolds.
For more on $\CAT(0)$ spaces, see \cite{ballmann} or \cite{bridson}.
An important consequence of the $\CAT(0)$ inequality is that every $\CAT(0)$ metric is \defn{convex}---that is, for every pair of geodesics (or even geodesic segments) $v,w$ in $X$, the function $t \mapsto d_X (v(t),w(t))$ is convex.

\begin{lemma}	[cf.~ Theorem 2 of \cite{manning}]	\label{htop le dG}
Let $\G$ be a group acting geometrically on a proper, geodesically complete,
convex (e.g.~ $\CAT(0)$) geodesic metric space $X$.
Then the geodesic flow $\phi = (\phi^t)_{t \in \R} = (g^t)_{t \in \R}$ on $SX$ satisfies $\htop(\phi) \le \delta_\G$.
\end{lemma}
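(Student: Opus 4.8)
The plan is to bound the number of $(n,\epsilon)$-separated orbit segments by the number of distinguishable pairs of endpoints, and then to control that number by the orbit-counting growth of $\G$, which is governed by $\delta_\G$. Fix a compact $K \subseteq SX$; since $\pifp(K)$ is compact we may fix $x_0 \in X$ and $\rho > 0$ with $\pifp(K) \subseteq \Ball(x_0,\rho)$. It suffices to show $\htop(\phi,K) \le \delta_\G$ for every such $K$, since $\htop(\phi) = \sup_K \htop(\phi,K)$. I will work with separated sets, which compute the same entropy.

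The central observation is that convexity lets one recover $(n,\epsilon)$-closeness from just two footpoints. Given $\epsilon > 0$, choose $r$ as in \corref{uniform compact-open constants}, so that $d_X((g^i v)(t),(g^i w)(t)) < \epsilon$ for all $t \in [-r,r]$ forces $d_{SX}(g^i v, g^i w) < \epsilon$. For $0 \le i < n$ the window $[i-r,i+r]$ lies in $[-r,n+r]$, so if $d_X(v(s),w(s)) < \epsilon$ for all $s \in [-r,n+r]$ then $v$ and $w$ are not $(n,\epsilon)$-separated. But $s \mapsto d_X(v(s),w(s))$ is convex, hence attains its maximum on $[-r,n+r]$ at an endpoint; thus it is enough that $d_X(v(-r),w(-r)) < \epsilon$ and $d_X(v(n+r),w(n+r)) < \epsilon$. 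Contrapositively, the assignment $v \mapsto (v(-r),v(n+r))$ carries any $(n,\epsilon)$-separated subset of $K$ to a set that is $\epsilon$-separated in the max metric on $X \times X$. Since $v(-r) \in \Ball(x_0,\rho+r)$ and $v(n+r) \in \Ball(x_0,\rho+n+r)$, the number of such points is at most $C_1(\epsilon) \cdot M(\rho+n+r)$, where $C_1(\epsilon)$ bounds the $\frac{\epsilon}{2}$-covering number of the fixed ball $\Ball(x_0,\rho+r)$ and $M(R)$ is the maximal size of an $\epsilon$-separated subset of $\Ball(x_0,R)$.

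It remains to bound $M(R)$ by orbit growth. Using cocompactness, fix a compact $D$ with $\G D = X$ and $x_0 \in D$. Every point of $\Ball(x_0,R)$ lies in some translate $\g D$ with $d(x_0,\g x_0) \le R + \diam D$, and each such translate, being isometric to $D$, meets an $\epsilon$-separated set in at most $C_2(\epsilon)$ points; hence $M(R) \le C_2(\epsilon) \cdot \card{\setp{\g \in \G}{d(x_0,\g x_0) \le R + \diam D}}$. Writing $N(R)$ for this orbit-counting function, the elementary bound $N(R) \le e^{sR}\sum_{\g \in \G} e^{-s d(x_0,\g x_0)}$, valid and finite for every $s > \delta_\G$, gives $\limsup_{R} \tfrac1R \log N(R) \le \delta_\G$. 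Combining the two estimates yields $s_n(\phi,K,\epsilon) \le C_1 C_2\, N(n + \rho + r + \diam D)$, and the constants (depending on $\epsilon$ and $K$ but not on $n$) wash out under $\tfrac1n \log$, so $\htop(\phi,K) \le \delta_\G$.

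The main obstacle is the first step: passing from the genuinely global distance $d_{SX}$, which weighs the entire geodesic line, to a statement about the compact window $[-r,n+r]$, and then collapsing that window to its two endpoints. This is exactly where \corref{uniform compact-open constants} and convexity do the essential work; everything after is bookkeeping with covering numbers and the standard comparison between the orbit-counting function and the Poincar\'e series.
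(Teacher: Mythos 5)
Your proof is correct, and it is the counting dual of the paper's argument rather than a copy of it. Both proofs run on the same two engines: convexity of $t \mapsto d_X(v(t),w(t))$, which collapses control on the window $[-r,n+r]$ to control at its two endpoints, and \corref{uniform compact-open constants}, which upgrades window control to $d_{SX}$-control along the orbit; and both finish with the fact that $\card{\setp{\g \in \G}{d(p,\g p) \le R}}$ grows at exponential rate at most $\delta_\G$. The difference is the direction of the count. The paper, following Manning, builds an $(n,\epsilon)$-\emph{spanning} set: it takes $\frac{\epsilon}{3}$-nets $Q_1,Q_2$ of two balls around the initial and terminal points of the orbit segments, and for each pair $(x,y) \in Q_1 \times Q_2$ constructs a geodesic line $v_{x,y}$ through $x$ and $y$, which by convexity shadows every geodesic of $\pifp^{-1}(K)$ whose ends land near $x$ and $y$; the cardinality of $Q_2$ is then bounded by covering the large ball with orbit translates of a fixed compact set. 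You instead bound an $(n,\epsilon)$-\emph{separated} set from above by injecting it into the endpoint pairs $(v(-r),v(n+r))$, which are pairwise separated in $X \times X$, and then estimate packing numbers. Your route buys a little extra: you never construct a geodesic, so geodesic completeness---which the paper uses precisely to extend the segment from $x$ to $y$ to a full line $v_{x,y} \in SX$---plays no role in your argument, and your proof of $\htop(\phi) \le \delta_\G$ works in any proper convex space with a cocompact, properly discontinuous, isometric action; the paper's route, in exchange, produces an explicit spanning family, which is the shape of Manning's original proof. One small repair to yours: with the paper's conventions, separation of $v,w$ only forces one of the two endpoint distances to be $\ge \epsilon$ (not $> \epsilon$), so after pigeonholing on the $\frac{\epsilon}{2}$-covering of $\Ball(x_0,\rho+r)$---where two first coordinates may be exactly $\epsilon$ apart---the second coordinates need not be separated; covering by balls of radius $\frac{\epsilon}{3}$ instead (or propagating non-strict inequalities through \lemref{pre-5.2}) makes the step airtight, and nothing downstream changes.
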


\begin{proof}
Choose a basepoint $p \in X$, and find $R > 0$ such that the compact set $K_1 := \cl{\Ball}(p,R)$ in $X$ satisfies $\G K_1 := \bigcup_{\g \in \G} \g K_1 = X$.
Let $L \subset SX$ be an arbitrary compact set.
Now $\pifp(L)$ is compact in $X$, so there is some $c \ge 1$ such that $\pifp(L)$ is a subset of the compact set $K := \cl{\Ball}(p,cR)$.
Hence $L$ is a subset of the compact set $K' := \pifp^{-1} K$.
Since every $(n,\epsilon)$-separated subset of $L$ is an $(n,\epsilon)$-separated subset of $K'$, we see that $\htop(\phi, L) \le \htop(\phi, K')$.
We will prove $\htop(\phi, K') \le \delta_\G$.

Let $\alpha, \epsilon, n > 0$ be arbitrary.
It follows from the definition of $\delta_\G$ that there is some $r_0 > 0$ such that for all $r \ge r_0$,
we have $\card{\setp{\g \in \G}{\g p \in \Ball(p,r)}}
\le e^{(\delta_\G + \alpha)r}$.
Choose $a_{\epsilon} \ge r_0$ such that for all $v,w \in SX$, if $d(v(t),w(t)) \le \epsilon$ for all $t \in [-a_{\epsilon}, a_{\epsilon}]$, then $d(v,w) \le \epsilon$.

Choose a minimal $\frac{\epsilon}{3}$-spanning set $Q \subset K$ for $K$.
Let $B_1 = \Ball(p, a_{\epsilon} + \diam K)$.
By assumption on $K$, the set $B_1$ is covered by $\G$-translates of $K$.
Every such $\G$-translate of $K$ is contained in $D_1 = \Ball(p, a_{\epsilon} + 2\diam K)$.
Since $a_{\epsilon} \ge r_0$, we find
\[\card{\setp{\g \in \G}{\g p \in D_1}}
\le e^{(\delta_\G + \alpha)(a_{\epsilon} + 2\diam K)}\]
and therefore $B_1$ is covered by at most $e^{(\delta_\G + \alpha)(a_{\epsilon} + 2\diam K)}$ $\G$-translates of $K$.
Thus $B_1$ contains an $\frac{\epsilon}{3}$-spanning set $Q_1$ (with points in $\G Q$) of cardinality at most $e^{(\delta_\G + \alpha)(a_{\epsilon} + 2\diam K)} \card{Q}$.
Similarly, let $B_2 = \Ball(p, a_{\epsilon} + n + \diam K)$; the above argument shows $B_2$ contains an $\frac{\epsilon}{3}$-spanning set $Q_2$ of cardinality at most
$e^{(\delta_\G + \alpha)(a_{\epsilon} + n + 2\diam K)} \card{Q}$.

Note that $K' := \pifp^{-1} (K) \subset SX$ is compact by properness of $\pifp$.
For each $x \in Q_1$ and $y \in Q_2$, choose some $v_{x,y} \in SX$ such that $v_{x,y}(-a_{\epsilon}) = x$ and $v_{x,y}(d(x,y) - a_{\epsilon}) = y$.
We claim the set $A_{n} = \setp{v_{x,y}}{x \in Q_1 \text{ and } y \in Q_2}$ is an $(n,\epsilon)$-spanning set for $K'$.
Let $w \in K'$ be arbitrary.
Since $w(0) \in K$, we know $w(-a_{\epsilon}) \in B_1$ and $w(n + a_{\epsilon}) \in B_2$.
Thus there exist $x \in Q_1$ and $y \in Q_2$ such that $d(x, w(-a_{\epsilon})) \le \frac{\epsilon}{3}$ and $d(y, w(n + a_{\epsilon})) \le \frac{\epsilon}{3}$.
Then
$d(y, v_{x,y}(n + a_{\epsilon}))
= \abs{d(x,y) - (n + 2 a_{\epsilon})}
= \abs{d(x,y) - d(w(-a_{\epsilon}), w(n + a_{\epsilon}))} \le 
\frac{2\epsilon}{3}$
by the triangle inequality,
and we conclude $d(v_{x,y}(n + a_{\epsilon}), w(n + a_{\epsilon})) \le \epsilon$.
By convexity of the metric on $X$, $d(v_{x,y}(t), w(t)) \le \epsilon$ for all $t \in [-a_{\epsilon}, n + a_{\epsilon}]$.
Therefore, $d(g^t v_{x,y}, g^t v_{x',y'}) \le \epsilon$ for all $t \in [0, n]$ by choice of $a_{\epsilon}$.
This proves $A_{n}$ is an $(n,\epsilon)$-spanning set for $K'$, as claimed.

Thus we see that, since $\card{Q}$ and $\card{Q_1}$ do not depend on $n$,
\begin{equation*}
\limsup_{n \to \infty} \frac{1}{n} \log r_n(\phi,K',\epsilon)
\le \limsup_{n \to \infty} \frac{1}{n} \log \card{A_{n}}
= \limsup_{n \to \infty} \frac{1}{n} \log \card{Q_2}
= \delta_\G + \alpha.
\end{equation*}
Since $\alpha, \epsilon > 0$ were arbitrary, $\htop(\phi, K') \le \delta_\G$.
Thus $\htop(\phi, L) \le \htop(\phi, K') \le \delta_\G$.
Since $L$ was arbitrary, $\htop(\phi) \le \delta_\G$.
\end{proof}

Combining Lemmas \ref{entropy of covers}, \ref{dG le htop}, and \ref{htop le dG} gives us \thmref{main manning}.

\section{Finite Dimension}

We now show $\modG{SX}$ is finite dimensional;
we will use this to prove \corref{knieper98 3.4}.

Every complete $\CAT(0)$ space $X$ has an \defn{ideal boundary}, written $\bd X$, obtained by taking equivalence classes of asymptotic geodesic rays.
The compact-open topology on the set of rays induces a topology on $\bd X$, called the \defn{cone} or \defn{visual} topology.
When $X$ is proper, both $\bd X$ and $\cl{X} = X \cup \bd X$ are compact metrizable spaces.

By classical theory (see, e.g.~ \cite[Theorem 1.7.7]{engelking}) for a separable metric space, the Lebesgue covering dimension, small inductive dimension, and large induction dimension are all equal.
Moreover, Kleiner remarks \cite[p. 412]{kleiner} that the geometric dimension of a separable $\CAT(0)$ space $X$ equals the Lebesgue covering dimension.

Throughout this section $X$ will be a proper $\CAT(0)$ space, and thus all the spaces $X$, $SX$, $\modG{X}$, $\modG{SX}$, $\bd X$, and $\cl{X}$ are separable metric spaces.
Therefore, we will simply write \defn{finite dimensional} to mean any of the equivalent definitions.

\begin{lemma}					\label{X is fin-dim}
Let $X$ be a proper, geodesically complete $\CAT(0)$ space which admits a cocompact action by isometries.
Then $X$ is finite dimensional.
\end{lemma}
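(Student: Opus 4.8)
The plan is to show that every closed ball in $X$ is a \emph{doubling} metric space at small scales, and then to globalize using cocompactness. Recall that a doubling metric space has finite Assouad dimension, and in particular finite covering dimension; since all the spaces in sight are separable metric, finite covering dimension is exactly what we want. Concretely, writing $S(x,\rho) = \setp{y \in X}{d(x,y) = \rho}$ for the metric sphere, and letting $\mathrm{cov}(Y,\epsilon)$ and $\mathrm{sep}(Y,\epsilon)$ denote the minimal size of an $\epsilon$-net and the maximal size of an $\epsilon$-separated subset of $Y$, I would aim to prove a bound $\mathrm{cov}(\cl{\Ball}(x,\rho),\rho/2) \le N$ that is uniform over all centers $x$ and all radii $\rho \le 1$.

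The key geometric input is a \emph{radial expansion} estimate coming from geodesic completeness together with convexity of the $\CAT(0)$ metric. Given $x$ and $\rho \le 1$, each point $a \in S(x,\rho)$ determines the unique segment $[x,a]$, which by geodesic completeness extends to a geodesic $\gamma_a$ on $[0,1]$ with $\gamma_a(0) = x$ and $\gamma_a(\rho) = a$; choosing one such extension sends $a$ to a point $\Phi(a) := \gamma_a(1) \in S(x,1)$. For $a \ne b$ the function $f(t) = d(\gamma_a(t),\gamma_b(t))$ is convex and vanishes at $0$, so $f(t)/t$ is nondecreasing and hence $d(\Phi(a),\Phi(b)) \ge \rho^{-1} d(a,b)$; thus \emph{any} choice of extensions yields a map expanding distances by the factor $\rho^{-1}$. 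The crucial consequence is a cancellation of scales: an $\epsilon$-separated subset of $S(x,\rho)$ maps to an $(\epsilon/\rho)$-separated subset of $S(x,1)$, so
\[
\mathrm{sep}(S(x,\rho),\tfrac{\rho}{4}) \le \mathrm{sep}(S(x,1),\tfrac14).
\]
The right-hand side is evaluated at the \emph{fixed} scale $\tfrac14$ on the unit sphere, independently of $\rho$.

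To pass from spheres to balls I would use the opposite, \emph{contracting} direction of the same convexity: radial contraction toward $x$ from radius $\rho$ to radius $t \le \rho$ is $(t/\rho)$-Lipschitz. Taking a $\tfrac{\rho}{4}$-net $\{s_j\}$ of $S(x,\rho)$ together with an $O(1)$-point radial net $\{t_k\}$ of $[0,\rho]$, the points $\gamma_{s_j}(t_k)$ together with $x$ form a $\tfrac{\rho}{2}$-net of $\cl{\Ball}(x,\rho)$: a point $y$ at radius $t$ has outward projection to $S(x,\rho)$ within $\tfrac{\rho}{4}$ of some $s_j$, the contraction to radius $t$ places $y$ within $\tfrac{\rho}{4}$ of the ray through $s_j$, and the radial net then supplies a point within $\tfrac{\rho}{4}$. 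Hence $\mathrm{cov}(\cl{\Ball}(x,\rho),\tfrac{\rho}{2}) \le O(1)\cdot \mathrm{sep}(S(x,\rho),\tfrac{\rho}{4}) \le O(1)\cdot\mathrm{sep}(S(x,1),\tfrac14)$ for all $\rho \le 1$. Properness makes $\mathrm{sep}(S(x,1),\tfrac14) \le \mathrm{sep}(\cl{\Ball}(x,1),\tfrac14)$ finite, and this is where cocompactness enters: since $X = \G K$ for a compact $K$, every unit ball is isometric to a subset of a single fixed compact ball, so $\sup_x \mathrm{sep}(\cl{\Ball}(x,1),\tfrac14) =: N_0 < \infty$. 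This gives uniform doubling at all scales $\rho \le 1$. Covering $X$ by countably many balls $\cl{\Ball}(x_i,\tfrac12)$ with $\{x_i\}$ dense, each is doubling and hence finite-dimensional with a uniform bound, and the countable closed sum theorem for covering dimension yields $\dim X < \infty$.

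The main obstacle I anticipate is bookkeeping rather than a conceptual gap. The extensions used to define $\Phi$ need not be unique — indeed geodesics branch, already in trees — so one must be careful that only a \emph{selection} of extensions is used and that the expansion estimate survives any such selection, which the convexity argument above guarantees. One must also cleanly separate the radial and angular contributions in the ball-to-sphere step, since the outward projection collapses the radial direction; isolating that direction as the $O(1)$ radial-net factor is exactly what makes the estimate go through. Finally, I would need to state precisely the dimension-theoretic fact that a doubling metric space is finite-dimensional and confirm it applies in the separable metric setting, which is where the equalities among the various notions of dimension recorded above are used.
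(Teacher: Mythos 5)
Your proof is correct, but it takes a genuinely different route from the paper's. The paper argues by contradiction through two substantial cited results: Kleiner's Theorem A (infinite geometric dimension produces Gromov--Hausdorff approximations of Euclidean $k$-balls of every dimension, which cocompactness lets one place at a single point $p$) and Theorem 9.4 of the author's earlier work, which converts these into round $k$-spheres in the link at $p$; geodesic completeness then promotes $\frac{\pi}{2}$-separated directions to $2k$ uniformly separated points on the unit sphere about $p$ for every $k$, contradicting properness. Your argument replaces this machinery with the elementary radial expansion/contraction estimate: convexity of the $\CAT(0)$ metric plus geodesic completeness shows that any choice of extension map $S(x,\rho) \to S(x,1)$ expands distances by the factor $\rho^{-1}$ (and the reverse contraction is $(t/\rho)$-Lipschitz), giving a covering bound $\mathrm{cov}(\cl{\Ball}(x,\rho),\rho/2) \le O(1)\cdot \mathrm{sep}(S(x,1),\tfrac14)$ that is uniform over $x$ (by cocompactness and properness) and over $\rho \le 1$; finite dimension then follows from standard dimension theory (doubling implies finite Hausdorff dimension, hence finite covering dimension for separable metric spaces by Szpilrajn's theorem, plus the countable closed sum theorem). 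Both proofs use the three hypotheses in the same roles---geodesic completeness to extend geodesics, properness for finiteness of separated sets, cocompactness for uniformity---but yours is self-contained modulo textbook facts and is quantitative, bounding $\dim X$ explicitly in terms of the doubling constant of a single compact ball, whereas the paper's is shorter on the page but leans on Kleiner's theorem and prior work; your argument also yields the paper's follow-up remark (local finite dimensionality without cocompactness) for free, since properness alone makes each fixed ball doubling, just not uniformly so over all of $X$.
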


\begin{proof}
Suppose, by way of contradiction, that $X$ has infinite geometric dimension.
By Kleiner \cite[Theorem A]{kleiner}, this means for every $k \in \N$ there exist $p_k \in X$ and sequences $R_j \to 0, S_j \subset X$ such that $d(S_j, p_k) \to 0$, and $\frac{1}{R_j} S_j$ converges to the unit ball $\Ball(1) \subset \R^k$ in the Gromov--Hausdorff topology.
By cocompactness we find some $p \in X$ such that for every $k \in \N$ there exist sequences $R_j \to 0, S_j \subset X$ such that $d(S_j, p) \to 0$, and $\frac{1}{R_j} S_j$ converges to the unit ball $\Ball(1) \subset \R^k$ in the Gromov--Hausdorff topology.
Thus by \cite[Theorem 9.4]{ricks-mixing} we find round spheres of dimension $k$ in the link of $X$ at $p$, for all $k$.
Each such $k$-dimensional sphere contains $2k$ points that are $\frac{\pi}{2}$-separated, and by geodesic completeness we can extend these directions to obtain a $\frac{\pi}{2}$-separated subset of $2k$ points in the metric sphere of radius $1$ about $p$ in $X$.
This holds for all $k$, violating properness of the metric on $X$.
Therefore, $X$ must have finite geometric dimension.
\end{proof}

\begin{remark}
The above proof works, with minor modification, when $X$ is assumed to be a proper, geodesically complete $\CAT(\kappa)$ space which admits a cocompact action by isometries, for any $\kappa \in \R$.
Additional minor modifications show $X$, even when not cocompact, is still locally finite dimensional.
\end{remark}

The following is due to Eric Swenson \cite{swenson99}.

\begin{theorem} [Theorem 12 of \cite{swenson99}]
If $X$ is a proper $\CAT(0)$ space which admits a cocompact action by isometries, then $\bd X$ is finite dimensional.
\end{theorem}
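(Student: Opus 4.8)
The plan is to reduce the finite dimensionality of $\bd X$ to that of $X$ itself, which \lemref{X is fin-dim} already supplies. If $X$ is bounded then it is compact and $\bd X = \emptyset$, so I may assume $X$ is unbounded. Fix a basepoint $p \in X$ and set $n = \dim X$, which is finite by \lemref{X is fin-dim}. Each metric sphere $S_k := \setp{x \in X}{d(p,x) = k}$ is a closed subset of $X$, hence satisfies $\dim S_k \le n$, and it is compact because $X$ is proper. My aim is to realize $\bd X$ as the limit of an inverse sequence assembled from these spheres, and then invoke the standard fact that the covering dimension of an inverse limit of compacta does not exceed the supremum of the dimensions of its terms.

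First I would set up the bonding maps. For $k \in \N$ and $x \in S_{k+1}$, the geodesic segment $[p,x]$ meets $S_k$ in a single point; sending $x$ to that point defines a geodesic retraction $\rho_k \colon S_{k+1} \to S_k$, which is continuous by convexity of the $\CAT(0)$ metric. Each $\xi \in \bd X$ is represented by a unique geodesic ray $c_\xi$ issuing from $p$ (uniqueness from convexity, existence from properness via Arzel\`a--Ascoli), and the points $c_\xi(k) \in S_k$ are radially consistent, i.e.\ $\rho_k\paren{c_\xi(k+1)} = c_\xi(k)$. Thus $\Phi(\xi) = \family{c_\xi(k)}_{k \in \N}$ defines a map $\Phi \colon \bd X \to \varprojlim (S_k, \rho_k)$.

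Next I would verify that $\Phi$ is a homeomorphism. It is continuous because convergence in the cone topology is precisely uniform convergence of the rays on compact sets, which forces convergence of each coordinate $c_\xi(k)$. It is injective because, for distinct $\xi, \eta$, the convex function $t \mapsto d\paren{c_\xi(t), c_\eta(t)}$ vanishes at $0$ and is positive somewhere, hence is positive and nondecreasing for all large $t$, so the coordinates eventually differ. It is surjective because a radially consistent family $(x_k)$ assembles into a single unit-speed ray $c$ with $c(k) = x_k$, whose endpoint maps back to it. Being a continuous bijection between compact Hausdorff spaces, $\Phi$ is a homeomorphism.

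Finally, the inverse-limit dimension theorem for compacta (see, e.g., \cite{engelking}) yields
\[\dim \bd X = \dim \varprojlim (S_k, \rho_k) \le \sup_{k} \dim S_k \le n < \infty.\]
I expect the main obstacle to be purely topological bookkeeping: confirming that the cone topology on $\bd X$ agrees with the inverse-limit topology (so that both $\Phi$ and $\Phi^{-1}$ are continuous) and pinning down the exact form of the inverse-limit dimension estimate. The geometric ingredients---existence and uniqueness of rays from $p$ and continuity of the geodesic retractions---are routine $\CAT(0)$ convexity, and the finiteness of $n$ is exactly \lemref{X is fin-dim}, which is where cocompactness enters.
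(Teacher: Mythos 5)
Your reduction has a genuine gap at its very first step: you invoke \lemref{X is fin-dim} to get $n = \dim X < \infty$, but that lemma requires $X$ to be \emph{geodesically complete}, whereas the theorem you are asked to prove makes no such assumption---it concerns an arbitrary proper $\CAT(0)$ space with a cocompact isometric action. The mismatch is fatal rather than cosmetic, because under the stated hypotheses $X$ can honestly be infinite dimensional: take $C = \setp{x \in \ell^2}{\abs{x_n} \le 1/n}$ (a compact, convex, infinite-dimensional subset of Hilbert space, hence a proper $\CAT(0)$ space) and let $X = C \times \R$, with $\Z$ acting cocompactly by translations in the second factor. Here $\dim X = \infty$, and every metric sphere $S_k$ with $k$ larger than the diameter of $C$ is also infinite dimensional (it contains the topological copy $\setp{(x,t)}{x \in C,\ t = \sqrt{k^2 - \norm{x}^2}}$ of $C$), so your final estimate $\dim \bd X \le \sup_k \dim S_k \le \dim X$ is vacuous---even though the theorem's conclusion still holds in this example, since $C$ is bounded, every ray is vertical, and $\bd X$ consists of two points. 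So any proof of the theorem as stated must work intrinsically on the boundary rather than through the dimension of $X$ or of its spheres; your strategy cannot be repaired by better bookkeeping.

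For comparison: the paper does not prove this statement at all---it is quoted as Theorem 12 of Swenson \cite{swenson99}, whose argument is of exactly that intrinsic kind. Your inverse-limit construction itself is sound: $\Phi \colon \bd X \to \varprojlim (S_k, \rho_k)$ is indeed a homeomorphism (rays from $p$ exist, are unique, and vary as you describe), and the dimension theorem for inverse limits of compacta is correctly applied. Hence, with geodesic completeness added to the hypotheses, your argument does prove the weaker statement $\dim \bd X \le \dim X < \infty$, and that weaker statement would in fact suffice for the only use the paper makes of Swenson's theorem, namely \corref{SXG is fin-dim}, since throughout the paper $X$ is geodesically complete. But as a proof of the theorem as stated, the proposal fails.
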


\begin{lemma}
Let $X$ be a proper $\CAT(0)$ space.
The map $SX \to X \times \dbX$ given by $v \mapsto (\pi(v), \emap(v)) = (v(0), v^-, v^+)$ is a topological embedding.
\end{lemma}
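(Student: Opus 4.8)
The plan is to write the map as $\Phi(v) = (\pi(v), \emap(v)) = (v(0), v^-, v^+)$ and to check in turn that $\Phi$ is continuous, injective, and a homeomorphism onto its image. The footpoint component $\pi = \pifp$ is $1$-Lipschitz, hence continuous, so the content lies in the endpoint map $\emap = (\emap^-, \emap^+)$. Here I would invoke the standard fact that in a proper $\CAT(0)$ space the endpoint-at-infinity map is continuous for the cone topology on $\bd X$ (see \cite{bridson}): if $v_n \to v$ in the compact-open topology, then the forward rays $v_n \res{[0,\infty)}$ converge uniformly on compacta to $v \res{[0,\infty)}$, and hence $v_n^+ \to v^+$ in $\bd X$; symmetrically $v_n^- \to v^-$. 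This yields continuity of $\Phi$.

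For injectivity I would use uniqueness of geodesic rays: in a proper $\CAT(0)$ space there is a unique geodesic ray from a given point to a given point at infinity. If $\Phi(v) = \Phi(w)$, then $v$ and $w$ share a footpoint $x$ and share both endpoints, so their forward rays are both the unique ray from $x$ to $v^+ = w^+$ and hence coincide, and likewise their backward rays coincide; thus $v = w$.

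The remaining and most structurally important step is continuity of $\Phi^{-1}$ on the image, and this is where properness of $\pifp$ does the work. Suppose $\Phi(v_n) \to \Phi(v)$, that is, $v_n(0) \to v(0)$ and $v_n^\pm \to v^\pm$. The footpoints all lie in the compact set $C = \set{v_n(0)} \cup \set{v(0)}$, so $\set{v_n} \subseteq \pifp^{-1}(C)$, which is compact because $\pifp$ is proper. Hence every subsequence of $(v_n)$ has a sub-subsequence converging to some $w \in SX$; continuity of $\Phi$ gives $\Phi(w) = \Phi(v)$, and injectivity then forces $w = v$. Since $SX$ is metrizable, it follows that $v_n \to v$, so $\Phi^{-1}$ is continuous and $\Phi$ is a topological embedding.

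The hard part is the continuity of the endpoint map; the other two steps are clean consequences of properness of $\pifp$ and uniqueness of geodesic rays, with the compactness trick above converting continuity plus injectivity into continuity of the inverse. I would either cite the cone-topology continuity directly, or, for a self-contained argument, unwind the cone-neighborhood basis at $v^+$ and use convexity of the metric to bound the distance between the ray from a fixed basepoint to $v_n^+$ and the forward ray of $v_n$, thereby converting compact-open convergence of the $v_n$ into cone convergence of their endpoints.
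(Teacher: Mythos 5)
The paper states this lemma without proof---it is quoted as a known fact and used only to deduce that $SX$ is finite dimensional---so there is no argument of the author's to compare yours against; judged on its own, your proof is correct. The decomposition into continuity, injectivity, and continuity of the inverse is the natural one, and each step is sound: injectivity follows from uniqueness of the geodesic ray from a point to a given boundary point (convexity of $t \mapsto d(v(t),w(t))$ plus boundedness forces this function to vanish), and your subsequence argument for the inverse correctly exploits properness of $\pifp$, which the paper has already established via Arzel\`a--Ascoli; since $SX$ and $X \times \dbX$ are metrizable ($X$ being proper, $\bd X$ compact metrizable), sequential arguments do suffice, as you note. The one step that genuinely needs the care you flag at the end is continuity of $\emap$: the cone topology on $\bd X$ is defined by uniform-on-compacta convergence of rays issuing from a \emph{fixed} basepoint, whereas your rays $v_n\res{[0,\infty)}$ have moving basepoints $v_n(0)$. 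The bridge is exactly the convexity estimate you sketch: letting $\sigma_n$ be the ray from $v(0)$ asymptotic to $v_n\res{[0,\infty)}$, the function $t \mapsto d(\sigma_n(t), v_n(t))$ is convex, nonnegative, and bounded, hence nonincreasing, so $d(\sigma_n(t), v_n(t)) \le d(v(0), v_n(0)) \to 0$ for all $t \ge 0$; combined with $v_n \to v$ uniformly on compacta this gives $\sigma_n \to v\res{[0,\infty)}$ uniformly on compacta, which is precisely cone convergence $v_n^+ \to v^+$. With that detail spelled out, your proof is a complete justification of the fact the paper leaves unproved.
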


\begin{lemma}
Let $X$ be a proper, geodesically complete $\CAT(0)$ space which admits a cocompact action by isometries.
Then $SX$ is finite dimensional.
\end{lemma}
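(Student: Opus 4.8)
The plan is to exhibit $SX$ as a subspace of a finite product of spaces already known to be finite dimensional, and then to appeal to the standard monotonicity and product theorems of dimension theory. By the preceding lemma, the assignment $v \mapsto (v(0), v^-, v^+)$ gives a topological embedding $SX \into X \times \dbX$; since $\dbX = \bd X \times \bd X$, it suffices to prove that $X \times \bd X \times \bd X$ is finite dimensional and that finite dimensionality is inherited by subspaces.

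First I would note that, as recorded at the beginning of this section, $X$, $\bd X$, and $SX$ are all separable metric spaces because $X$ is proper. This is exactly the hypothesis under which the classical dimension theory of \cite{engelking} applies, and in particular it justifies conflating the covering, small inductive, and large inductive dimensions throughout. Next I would bound the dimension of the target: by \lemref{X is fin-dim} the space $X$ is finite dimensional, and by Swenson's theorem above $\bd X$ is finite dimensional. For separable metric spaces covering dimension is subadditive under finite products, so applying the product inequality $\dim(A \times B) \le \dim A + \dim B$ (see \cite{engelking}) twice gives $\dim(X \times \bd X \times \bd X) \le \dim X + 2\dim \bd X < \infty$.

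Finally, since $SX$ is homeomorphic to a subspace of $X \times \bd X \times \bd X$, and covering dimension is monotone under passage to subspaces for separable metric spaces (again \cite{engelking}), I would conclude $\dim SX \le \dim(X \times \bd X \times \bd X) < \infty$, so $SX$ is finite dimensional. I do not anticipate a genuine obstacle in this argument; the only point requiring care is verifying that every space in sight is separable and metrizable so that both the product and the subspace theorems are available, and this is guaranteed by properness of $X$ together with the hypotheses of \lemref{X is fin-dim} and Swenson's theorem.
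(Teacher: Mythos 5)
Your proof is correct and follows essentially the same route as the paper: embed $SX$ into $X \times \dbX$ via the preceding lemma, then combine \lemref{X is fin-dim} and Swenson's theorem with the product and subspace theorems from \cite{engelking}. The only difference is that you spell out the separability hypotheses and the explicit dimension inequality, which the paper leaves implicit.
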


\begin{proof}
The product $X \times \dbX$ of finite-dimensional spaces is finite dimensional \cite[Theorem 1.5.16]{engelking}.
Since $SX$ embeds into $X \times \dbX$, it is also finite dimensional \cite[Theorem 1.1.1]{engelking}.
\end{proof}

\begin{corollary}				\label{SXG is fin-dim}
Let $\G$ be a group acting freely geometrically on a proper, geodesically complete $\CAT(0)$ space $X$.
Then $\modG{X}$ and $\modG{SX}$ are finite dimensional.
\end{corollary}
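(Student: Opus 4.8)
The plan is to descend finite-dimensionality from $X$ and $SX$ to their quotients, the crucial point being that the action is \emph{free}. First I would observe that a free, properly discontinuous isometric action on a proper (hence locally compact Hausdorff) space is a covering-space action: every point has a neighborhood $U$ with $\g U \cap U = \emptyset$ for all $\g \neq e$, so the canonical projections $\piG \colon X \to \modG{X}$ and $\piGS \colon SX \to \modG{SX}$ are covering maps, in particular local homeomorphisms. (Freeness and proper discontinuity on $SX$ follow from the corresponding properties on $X$, since $\pifp$ is $\G$-equivariant and $\g v = v$ forces $\g$ to fix $v(0)$.) Since the action is also cocompact and $X$ is proper, the quotients $\modG{X}$ and $\modG{SX}$ are compact, and they are metrizable by the standing remarks at the start of this section.

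Because $\piG$ and $\piGS$ are local homeomorphisms, every point of $\modG{X}$ (resp.\ $\modG{SX}$) has an open neighborhood homeomorphic to an open subset of $X$ (resp.\ $SX$). By \lemref{X is fin-dim} and the preceding lemma, $X$ and $SX$ are finite dimensional, say of covering dimension $n$ and $m$; by the subspace theorem \cite[Theorem 1.1.1]{engelking} each such neighborhood has dimension at most $n$ (resp.\ $m$). I would then use compactness of the quotient to extract a finite subcover by such open sets and shrink it to a closed cover $\{F_i\}$ (possible since a compact space is normal), with each $\dim F_i$ still bounded by $n$ (resp.\ $m$) via the subspace theorem. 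Finally, the finite closed sum theorem for covering dimension \cite[Theorem 1.5.3]{engelking} gives $\dim \modG{X} \le n$ and $\dim \modG{SX} \le m$, so both quotients are finite dimensional.

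The step I expect to require the most care — and the one that is genuinely the heart of the matter — is the justification that dimension is controlled locally. A continuous image of a finite-dimensional space can have strictly larger, even infinite, covering dimension, so naive monotonicity under the quotient map is unavailable; it is essential to exploit that $\piG$ and $\piGS$ are honest local homeomorphisms and then to invoke the sum theorem to promote the local dimension bound to a global one. Everything else (compactness of the quotients, the shrinking of finite covers, the subspace and sum theorems) is standard once the covering-space structure coming from freeness is in place.
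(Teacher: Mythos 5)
Your proposal is correct, but it takes a genuinely different route from the paper. The paper's proof is a single line: the projections $X \to \modG{X}$ and $SX \to \modG{SX}$ are open maps with discrete fibers (the fibers are orbits, which are discrete by proper discontinuity), so $\dim(X) = \dim(\modG{X})$ and $\dim(SX) = \dim(\modG{SX})$ by \cite[Theorem 1.12.7]{engelking}, the theorem that open maps with discrete fibers between separable metric spaces preserve dimension. You instead exploit freeness to make $\piG$ and $\piGS$ covering maps, bound the dimension locally via the subspace theorem, and then globalize using cocompactness and the finite closed sum theorem. Both arguments are sound, and your diagnosis of the real issue --- that dimension is not monotone under continuous surjections, so some structure of the quotient map must be used --- is exactly right. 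The trade-offs: the paper's citation is shorter, yields equality of dimensions rather than just an upper bound, and does not use freeness at all, since openness with discrete fibers holds for any properly discontinuous isometric action, including ones with torsion, where the quotient map fails to be a local homeomorphism near points with nontrivial stabilizer; your argument genuinely needs freeness (for the covering structure) and compactness of the quotient (for the finite cover), but it is more elementary and self-contained, using only the subspace and sum theorems rather than the more specialized open-mapping theorem. Incidentally, you could drop the reliance on compactness: the quotients are separable metric, hence Lindel\"of, so a countable subcover together with the countable closed sum theorem would serve equally well.
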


\begin{proof}
The projections $X \to \modG{X}$ and $SX \to \modG{SX}$ are open with discrete fibers, so $\dim(X) = \dim(\modG{X})$ and $\dim(SX) = \dim(\modG{SX})$ by \cite[Theorem 1.12.7]{engelking}.
\end{proof}

\section{Review of Bowen-Margulis measures}

STANDING HYPOTHESIS:
For the remainder of the paper, let $\G$ be a group acting freely geometrically on a proper, geodesically complete, $\CAT(0)$ space $X$ with a rank one axis.
To simplify the exposition, we will exclude the trivial case $X = \R$ by always assuming $\bd X$ has at least three points.

\subsection{More on $\CAT(0)$ spaces}

We have a natural endpoint projection $\emap \colon SX \to \dbX$ defined by $\emap(v) = (v^-, v^+) := (\lim_{t \to -\infty} v(t), \lim_{t \to +\infty} v(t))$.
And in fact $v \in SX$ is parallel to $w \in SX$ if and only if $\emap(v) = \emap(w)$.
We will also use the map $\pi_p \colon SX \to \dbX \times \R$ given by
$\pi_p(v) = (v^-, v^+, b_{v^-} (v(0), p))$.
Define the \defn{cross section} of $v \in SX$ to be $CS(v) = \pi_p^{-1} \set{\pi_p(v)}$, and the \defn{width} of a geodesic $v \in SX$ to be $\width(v) = \diam CS(v)$.
The width of $v$ is in fact the maximum width of a flat strip $\R \times [0, R]$ in $X$ parallel to $v$.

We call a geodesic in a $\CAT(0)$ space \defn{rank one} if $\width(v) < \infty$, and \defn{zero width} if $\width(v) = 0$.
Write $\Reg \subseteq SX$ for the set of rank one geodesics, and $\Zerowidth \subseteq \Reg$ for the set of zero-width geodesics.
The following lemma describes an important aspect of the geometry of rank one geodesics in a $\CAT(0)$ space.

\begin{lemma}[Lemma III.3.1 in \cite{ballmann}]\label{Ballmann's lemma}
Let $v \in SX$ have $\width(v) < R$ for some $R > 0$.
There are neighborhoods $U$ and $V$ in $\cl{X}$ of $v^-$ and $v^+$ such that for any $\xi \in U$ and $\eta \in V$, there is a geodesic $w$ joining $\xi$ to $\eta$.
For any such $w$, we have $d(v(0), \image(w)) < R$; in particular, 
we may assume $\width(w) < R$ for all such $w$.
\end{lemma}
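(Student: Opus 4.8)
The plan is to reduce everything to the flat strip theorem together with an Arzel\`a--Ascoli compactness argument, using the finite-width (rank one) hypothesis to prevent the connecting geodesics from escaping to infinity. Write $o = v(0)$. The geometric input I would record first is the following: any complete geodesic $w$ with $\emap(w) = (v^-, v^+)$ is parallel to $v$, so by the flat strip theorem $v$ and $w$ bound a flat strip whose width is at most $\width(v)$; in particular $d(o, \image(w)) \le \width(v) < R$. This is the inequality that the whole argument is designed to propagate from parallel geodesics to nearby, non-parallel ones.

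For the existence of a connecting geodesic I would argue by contradiction: if no neighborhoods $U, V$ work, choose $\xi_n \to v^-$ and $\eta_n \to v^+$ with no geodesic joining $\xi_n$ to $\eta_n$. Approximating each ideal point by interior points $x_m \to \xi_n$, $y_m \to \eta_n$ and taking the unique $\CAT(0)$ geodesic segments $[x_m, y_m]$, I would extract via Arzel\`a--Ascoli (properness of $X$) a subsequential limit that is a geodesic joining $\xi_n$ to $\eta_n$, contradicting the choice. The only thing that can obstruct passage to the limit is the segments drifting off, i.e.\ the nearest point of $[x_m, y_m]$ to $o$ escaping to infinity; ruling this out is the crux and is exactly where rank one enters.

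For the distance bound I would again argue by contradiction: suppose $\xi_n \to v^-$, $\eta_n \to v^+$ and geodesics $w_n$ join them with $d(o, \image(w_n)) \ge R$. Reparametrize each $w_n$ so that $w_n(0)$ is the nearest point of $w_n$ to $o$. Once I know the sequence $w_n(0)$ is bounded, Arzel\`a--Ascoli yields a subsequential limit $w_\infty$, a geodesic with $\emap(w_\infty) = (v^-, v^+)$ and $d(o, \image(w_\infty)) \ge R$; but $w_\infty$ is parallel to $v$, so the flat strip bound forces $d(o, \image(w_\infty)) \le \width(v) < R$, a contradiction. The final width clause I would handle by the same scheme: if the joining geodesics had $\width(w_n) \ge R$, then the cross section of $w_n$ contains a second geodesic $w_n'$ (also joining $\xi_n$ to $\eta_n$) at distance at least $R - 1/n$ from $w_n$; the distance bound keeps both $w_n$ and $w_n'$ within $R$ of $o$, so passing to limits produces two geodesics parallel to $v$ at mutual distance $\ge R$, forcing a flat strip parallel to $v$ of width $\ge R$ and hence $\width(v) \ge R$, a contradiction.

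The main obstacle throughout is the uniform non-escape estimate --- that the nearest points $w_n(0)$ (and the limiting segments in the existence step) stay within a bounded distance of $o$. This is precisely the place where the hypothesis $\width(v) < \infty$ is indispensable: in a flat plane, where $v$ has infinite width, geodesics joining points near $v^\pm$ can wander arbitrarily far from $o$, so the conclusion genuinely fails without rank one. To close the gap I would convert the cone-topology convergence $\xi_n \to v^-$, $\eta_n \to v^+$ into metric control via $\CAT(0)$ comparison (the Alexandrov angle $\angle_o(\xi_n, \eta_n)$ tends to $\pi$), and then show that an unbounded escape of $w_n(0)$ would, in the limit, produce a flat half-plane bounded by $v$, contradicting finite width. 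This is the structural content behind Ballmann's Lemma III.3.1, and I would either invoke it directly or reprove it by extracting the half-plane from the escaping configuration.
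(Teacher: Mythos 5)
A preliminary remark: the paper offers no proof of this lemma at all --- it is quoted directly from Ballmann's book --- so the only meaningful comparison is with Ballmann's own argument. Your outline reproduces the skeleton of that argument correctly: the flat strip theorem gives $d(v(0),\image(w)) \le \width(v) < R$ for any $w$ parallel to $v$, and Arzel\`a--Ascoli plus limiting arguments handle existence and the final width clause \emph{once one knows the connecting segments and geodesics stay in a fixed compact set}. Those reductions are sound. The genuine gap is that the one step carrying all the content --- the uniform non-escape estimate --- is never proved. You name it, correctly identify it as the point where $\width(v) < \infty$ must enter, and then propose to close it either by ``invoking [Ballmann's Lemma III.3.1] directly,'' which is circular (that is precisely the statement being proved), or by showing that escaping geodesics ``produce in the limit a flat half-plane bounded by $v$,'' for which no argument is given. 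That second route is not a routine verification: the angle control you suggest as the bridge ($\angle_{v(0)}(\xi_n,\eta_n) \to \pi$) is satisfied in your own flat-plane counterexample, where escape does occur, so it cannot by itself rule escape out; and in a proper $\CAT(0)$ space ``$v$ bounds a flat half-plane'' is equivalent to $\width(v) = \infty$, so ``escape implies a half-plane'' is essentially a restatement of the lemma rather than an independent fact you can quote. As written, your proposal is an accurate description of where the theorem is hard, not a proof of it.

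For contrast, here is how the gap is actually closed --- notably, by sidestepping the escape dichotomy rather than confronting it. First reduce to segments: any geodesic joining $\xi$ to $\eta$ is the locally uniform limit of its subsegments $[x_m,y_m]$ with $x_m \to \xi$, $y_m \to \eta$, so it suffices to show that segments $[x,y]$ with $x,y \in X$ near $v^-,v^+$ pass within $R$ of $v(0)$. Suppose not: $d(v(0),[x_n,y_n]) \ge R$ with $x_n \to v^-$, $y_n \to v^+$. By cone-topology convergence one can choose $p_n$ on $[v(0),x_n]$ and $q_n$ on $[v(0),y_n]$ with $p_n \to v^-$, $q_n \to v^+$ and $d(p_n,\image(v)),\, d(q_n,\image(v)) \to 0$; convexity then forces $[p_n,q_n]$ to pass within $o(1)$ of $v(0)$. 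Now slide the endpoint pair continuously along $[v(0),x_n]$ and $[v(0),y_n]$ from $(p_n,q_n)$ out to $(x_n,y_n)$: the distance from $v(0)$ to the sliding segment varies continuously, starts below $R'$ and ends at $\ge R$, so the intermediate value theorem produces subsegments $[p_n',q_n']$ at distance \emph{exactly} $R'$ from $v(0)$, where $R'$ is chosen with $\width(v) < R' < R$, and whose endpoints still converge to $v^{\mp}$. These segments cannot escape --- their closest points to $v(0)$ lie on a fixed sphere --- so Arzel\`a--Ascoli yields a complete geodesic joining $v^-$ to $v^+$ at distance $R'$ from $v(0)$, and the flat strip theorem then gives a flat strip parallel to $v$ of width $R' > \width(v)$, a contradiction. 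This sliding/intermediate-value device is the missing idea: it manufactures bounded configurations at a prescribed distance instead of trying to extract structure from unbounded ones, which is what your proposal would need to do but never does.
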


The \defn{Tits metric} $\dT$ on $\bd X$ induces a topology that is finer (usually strictly finer) than the visual topology.
The Tits metric is complete $\CAT(1)$, and measures the asymptotic angle between geodesic rays in $X$.
In fact, $(\xi,\eta) \in \emap(\Reg)$ if and only if $\dT(\xi,\eta) > \pi$.

A geodesic $v \in SX$ is \defn{axis} of an isometry $\g \in \Isom X$ if $\g$ translates along $v$, i.e., $\g v = g^t v$ for some $t > 0$.
We note the $\G$-action on $X$ also naturally extends to an action by homeomorphisms on $\cl{X}$ (and therefore on $\bd X$).
It is well-known that if $X$ has a rank one axis, then the $\G$-action on $\bd X$ is minimal.

\subsection{Measures}
\label{subsec: measures}

We recall the measures constructed in \cite{ricks-mixing}.

For $\xi \in \bd X$ and $p,q \in X$, let $b_{\xi} (p, q)$ be the Busemann cocycle
\[b_{\xi} (p, q) = \lim_{t \to \infty} \left[ d([q,\xi)(t), p) - t \right].\]
These functions are $1$-Lipschitz in both variables and satisfy the cocycle property $b_{\xi} (x, y) + b_{\xi} (y, z) = b_{\xi} (x, z)$.  Furthermore, $b_{\gamma \xi} (\gamma x, \gamma y) = b_{\xi} (x, y)$ for all $\gamma \in \Isom X$.

Since $\G$ acts geometrically on $X$, $\G$ is finitely generated and therefore $\delta_\G < \infty$.
Thus Patterson's construction yields a conformal density $\family{\mu_p}_{p \in X}$ of dimension $\delta_\G$ on $\bd X$, called the \defn{Patterson-Sullivan} measure.

\begin{definition}				\label{conformal density}
A \defn{conformal density of dimension $\delta$} is a family $\family{\mu_p}_{p \in X}$ of equivalent finite Borel measures on $\bd X$, such that for all $p,q \in X$ and $\g \in \G$:
\begin{enumerate}
\item \label{equivariance}
the pushforward $\gamma_* \mu_p = \mu_{\gamma p}$ and
\item \label{Radon-Nikodym}
the Radon-Nikodym derivative $\frac{d\mu_q}{d\mu_p}(\xi) = e^{-\delta b_\xi (q, p)}$.
\end{enumerate}
\end{definition}

Now fix $p \in X$.
For $(v^-,v^+) \in \emap(SX)$, define $\beta_p \colon \emap(SX) \to \R$ by $\beta_p (v^-, v^+) = (b_{\xi} + b_{\eta}) (v(0), p)$; this does not depend on choice of $v \in \emap^{-1}(v^-,v^+)$.
The measure $\mu$ on $\dbX$ defined by
\[d\mu (\xi, \eta)
= e^{-\delta_\G \beta_p (\xi, \eta)} d\mu_p (\xi) d\mu_p (\eta)\]
is $\G$-invariant and does not depend on choice of $p \in X$; it is called a \emph{geodesic current}.

The \defn{Bowen-Margulis} measure $m$ is a Radon measure on $SX$ that is invariant under both $g^t$ and $\G$, constructed as follows:
The measure $\mu \times \lambda$ on $\dbX \times \R$, where $\lambda$ is Lebesgue measure, is supported on $\emap(\Zerowidth) \times \R$.
Then $\pi_p \colon SX \to \dbX \times \R$ is seen to restrict a homeomorphism from $\Zerowidth$ to $\emap(\Zerowidth) \times \R$, hence $m = \mu \times \lambda$ may be viewed as a Borel measure on $SX$.

The Bowen-Margulis measure $m$ has a quotient measure $m_\G$ on $\modG{SX}$.
Since we assume $\G$ acts freely on $X$ (and therefore on $SX$), $m_\G$ can be described as follows:  Whenever $A \subset SX$ is a Borel set on which $\piGS$ is injective, $m_\G(\piGS A) = m(A)$.
By proper normalization of $\mu_p$, we may (and will) assume $\norm{m_\Gamma} := m_\G(\modG{SX}) = 1$.

\section{Shadows}

Define \defn{shadows} in $\cl{X}$ as follows:
Let $p \in X$.
For $x \in X$ and $\xi \in \bd X$, write
\begin{gather*}
\pr_x(p) = \setp{v^+ \in \bd X}{v \in \pifp^{-1}(p) \text{ satisfies } v(-d(p,x)) = x} \\
\text{and} \quad
\pr_{\xi}(p) = \setp{v^+ \in \bd X}{v \in \pifp^{-1}(p) \text{ satisfies } v^- = \xi}.
\end{gather*}

We will need some estimates on the measure of these shadows.
We present here an adapted argument from \cite[Proposition 2.3]{knieper98}.

\begin{lemma}					\label{knieper98 2.3 2}
For each $\xi \in \cl{X}$ and $p \in X$ there exist $R > 0$, $\eta \in \bd X$, and an open neighborhood $U \times V$ of $(\xi,\eta)$ in $\cl{X} \times \bd X$ such that $V \subseteq \pr_{\xi'}(\Ball(p,R))$ for all $\xi' \in U$.
\end{lemma}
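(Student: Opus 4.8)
The plan is to reduce everything to \lemref{Ballmann's lemma} for a single, conveniently placed rank one geodesic, and then to move an arbitrary $\xi$ into the relevant neighborhood using the North--South dynamics of a rank one isometry. The essential point is that $R$ is permitted to depend on $\xi$ and $p$, so I am free to enlarge it as needed.

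First I would fix a rank one axis $c$ of some $\g \in \G$ (provided by the Standing Hypothesis), with translation length $\ell > 0$, endpoints $c^{\pm}$, and footpoint $p_0 := c(0)$. Since $\width(c) < \infty$, applying \lemref{Ballmann's lemma} to $c$ produces a radius $R_0 > \width(c)$ together with neighborhoods $U_c \ni c^-$ and $V_c \ni c^+$ in $\cl X$ such that every $\zeta \in U_c$ and $\eta' \in V_c$ are joined by a geodesic whose image meets $\Ball(p_0, R_0)$. I would take $\eta := c^+$ as the target boundary point.

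Next, for $\xi \in \cl X$ with $\xi \ne c^+$, the North--South dynamics of the rank one isometry $\g$ on $\cl X$ give $\g^{-n}\xi \to c^-$, so $\g^{-n}\xi \in U_c$ for some fixed large $n$. I would then set $U := \g^n U_c$ (an open neighborhood of $\xi$), $V := (\g^n V_c) \cap \bd X$ (a neighborhood of $\eta = c^+$, since $\g c^+ = c^+$), and crucially $R := R_0 + d(p, \g^n p_0)$, which is finite. To verify the conclusion, take any $\xi' \in U$ and $\eta'' \in V$; then $\g^{-n}\xi' \in U_c$ and $\g^{-n}\eta'' \in V_c$, so \lemref{Ballmann's lemma} yields a geodesic from $\g^{-n}\xi'$ to $\g^{-n}\eta''$ meeting $\Ball(p_0,R_0)$. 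Applying $\g^n$ produces a geodesic from $\xi'$ to $\eta''$ meeting $\Ball(\g^n p_0, R_0) \subseteq \Ball(p,R)$, whence $\eta'' \in \pr_{\xi'}(\Ball(p,R))$; this gives $V \subseteq \pr_{\xi'}(\Ball(p,R))$ for all $\xi' \in U$. The remaining case $\xi = c^+$ is handled by repeating the argument with a second rank one axis $c' = h c$, where $h \in \G$ is chosen with $h c^+ \ne c^+$ (possible by minimality of the $\G$-action on $\bd X$), so that $\xi \ne (c')^+$. The same scheme applies verbatim when $\xi \in X$ is an interior point, since $\g^{-n}\xi \to c^-$ there as well and \lemref{Ballmann's lemma} also joins interior points to boundary points.

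The main obstacle is conceptual rather than computational: a direct application of \lemref{Ballmann's lemma} only reaches those $\xi$ lying in a fixed neighborhood of the rank one endpoint $c^-$, and there is no evident way to enlarge that neighborhood to an arbitrary (possibly higher rank) $\xi$ while keeping the shadow anchored at $p$. The device that overcomes this is to translate $\xi$ back into $U_c$ by $\g^{-n}$ and to absorb the resulting displacement $\g^n p_0$ of the reference ball into the enlarged radius $R$; this succeeds precisely because $R$ is allowed to depend on $\xi$ and $p$. One should also confirm the two standard facts being invoked—North--South dynamics for a rank one axial isometry acting on $\cl X$, and the existence of a second rank one axis with a different attracting fixed point—both of which follow from the rank one hypothesis.
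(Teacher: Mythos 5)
Your proof is correct, but it takes a genuinely different route from the paper's. The paper argues in two cases: for $\xi \in X$ it proceeds directly, using that shadows $\pr_{\xi}(\Ball(p,R))$ based at interior points are open and nonempty, with convexity of the metric making the inclusion uniform over a small ball $U = \Ball(\xi,r)$ after enlarging $R$ to $R+r$; for $\xi \in \bd X$ it chooses a Tits-isolated point $\eta \neq \xi$, so that $\dT(\xi,\eta) > \pi$ and hence (by the stated characterization of $\emap(\Reg)$) there is a rank one geodesic $v$ from $\xi$ to $\eta$, and then applies \lemref{Ballmann's lemma} to this $\xi$-dependent geodesic, taking $R = r + d(p,v(0))$. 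You instead apply \lemref{Ballmann's lemma} only once, to a single fixed rank one axis $c$, and use the North--South dynamics of its axial isometry to move an arbitrary $\xi$ into the fixed neighborhood $U_c$, then push the neighborhoods forward by $\g^n$ and absorb the displacement $d(p,\g^n p_0)$ into $R$; minimality of the $\G$-action supplies a second axis for the exceptional case $\xi = c^+$ (you could avoid this case entirely by also applying \lemref{Ballmann's lemma} to the reversed axis $t \mapsto c(-t)$, whose relevant endpoint is $c^-$). The trade-off: the paper's route uses only Tits-metric facts it has already set up (existence of Tits-isolated points, the characterization of rank one pairs) and needs no dynamics, whereas your route avoids the Tits metric entirely but imports North--South dynamics of a rank one isometry on $\cl{X}$---a standard fact, proved alongside the quoted \lemref{Ballmann's lemma} in Ballmann's book, but one the paper never states, which you correctly flag. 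Your argument also treats interior and boundary $\xi$ uniformly; the one detail to add is that for $\xi' \in U \cap X$ the geodesic produced is a ray from $\xi'$, which must be extended to a full line (geodesic completeness) and reparametrized so its footpoint lies in $\Ball(p,R)$, in order for $\eta''$ to belong to $\pr_{\xi'}(\Ball(p,R))$ as defined via elements of $SX$.
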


\begin{proof}
Let $p \in X$ and $\xi \in \cl{X}$.
If $\xi \in X$ then $\pr_{\xi}(\Ball(p,R))$ is open (and nonempty) for all $R > 0$ by definition of the cone topology.
And for all $r > 0$, the open set $U := \Ball(\xi,r) \subset X$ satisfies
$\bigcap_{y \in U} \pr_{y}(\Ball(p,R+r)) \supset \pr_{\xi}(\Ball(p,R))$
by convexity.
So let $\xi \in \bd X$.
Choose a Tits-isolated point $\eta \neq \xi$ in $\bd X$.
Then there exists $v \in \Reg$ with $(v^-,v^+) = (\xi,\eta)$.
By \lemref{Ballmann's lemma} there exists an open neighborhood $U \times V$ of $(v^-,v^+)$ in $\dbX$ such that for all $(\xi',\eta') \in U \times V$, the set $\Reg \cap \emap^{-1}(\xi',\eta')$ is not empty, and there is some $r > 0$ (not depending on $(\xi',\eta')$) such that every $w \in \emap^{-1}(\xi',\eta')$ passes through $\Ball(v(0),r)$.
Putting $R = r + d(p, v(0))$, we see that $V \subseteq \pr_{\xi'}(\Ball(p,R))$ by the triangle inequality for all $\xi' \in U$.
\end{proof}

\begin{lemma}					\label{knieper98 2.3 5}
There are constants $R_0, \ell > 0$ such that
$\mu_p(\pr_x(\Ball(p,R_0))) \ge \ell$
for all $x \in \cl{X}$ and $p \in X$.
\end{lemma}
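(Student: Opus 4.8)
The plan is to reduce the uniform bound to a single-basepoint statement and then invoke \lemref{knieper98 2.3 2} via compactness of $\cl X$. First I would record the $\G$-equivariance of the quantity. Since $\g \in \G$ acts on $\bd X$ by homeomorphisms carrying the shadow $\pr_x(\Ball(p,R)) = \bigcup_{q \in \Ball(p,R)} \pr_x(q)$ to $\pr_{\g x}(\Ball(\g p, R))$, and since $\g_* \mu_p = \mu_{\g p}$ by \itemref{equivariance}, the value $\mu_p(\pr_x(\Ball(p,R)))$ is invariant under the diagonal action $(p,x) \mapsto (\g p, \g x)$. Fixing a compact $K \subset X$ with $\G K = X$ and a basepoint $p_0 \in K$, every $p \in X$ can be written $\g q$ with $q \in K$, so it suffices to bound $\mu_p(\pr_x(\Ball(p, R_0)))$ from below for $p \in K$ and $x \in \cl X$.

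Next I would decouple the two parameters and remove the dependence on $p \in K$ by two comparisons. For the measures, \itemref{Radon-Nikodym} gives $\frac{d\mu_p}{d\mu_{p_0}}(\xi) = e^{-\delta_\G b_\xi(p, p_0)}$, and since $\abs{b_\xi(p,p_0)} \le d(p,p_0) \le \rho := \sup_{q \in K} d(q, p_0) < \infty$, we get $\mu_p \ge e^{-\delta_\G \rho}\, \mu_{p_0}$ for all $p \in K$. For the sets, the triangle inequality gives $\Ball(p_0, R_1) \subseteq \Ball(p, R_1 + \rho)$, so with $R_0 := R_1 + \rho$ the shadow is monotone in the ball: $\pr_x(\Ball(p_0, R_1)) \subseteq \pr_x(\Ball(p, R_0))$. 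Combining these, $\mu_p(\pr_x(\Ball(p,R_0))) \ge e^{-\delta_\G \rho}\, \mu_{p_0}(\pr_x(\Ball(p_0, R_1)))$, so it remains to produce $R_1, \ell_1 > 0$ with $\mu_{p_0}(\pr_x(\Ball(p_0, R_1))) \ge \ell_1$ for every $x \in \cl X$.

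For this single-basepoint statement I would apply \lemref{knieper98 2.3 2}: for each $\xi \in \cl X$ it yields an open neighborhood $U_\xi \times V_\xi$ of some $(\xi, \eta_\xi)$ and a radius $r_\xi$ with $V_\xi \subseteq \pr_{x}(\Ball(p_0, r_\xi))$ for all $x \in U_\xi$. The sets $U_\xi$ cover the compact space $\cl X$, so finitely many $U_{\xi_1}, \dots, U_{\xi_N}$ suffice, and I set $R_1 = \max_i r_{\xi_i}$. Because $X$ has a rank one axis, the $\G$-action on $\bd X$ is minimal; since $\supp(\mu_{p_0})$ is a nonempty closed subset of $\bd X$ that is $\G$-invariant (using that each $\g_* \mu_{p_0} = \mu_{\g p_0}$ is equivalent to $\mu_{p_0}$), it must equal all of $\bd X$. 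Hence every nonempty open set has positive measure and $\ell_1 := \min_i \mu_{p_0}(V_{\xi_i}) > 0$. For any $x \in \cl X$ we have $x \in U_{\xi_i}$ for some $i$, so $V_{\xi_i} \subseteq \pr_x(\Ball(p_0, r_{\xi_i})) \subseteq \pr_x(\Ball(p_0, R_1))$ and $\mu_{p_0}(\pr_x(\Ball(p_0, R_1))) \ge \ell_1$. Setting $\ell := e^{-\delta_\G \rho}\, \ell_1$ then completes the argument.

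I expect the main obstacle to be the uniformity in $p$ rather than in $x$: the compactness argument over $\cl X$ is routine once \lemref{knieper98 2.3 2} is available, but the two parameters must be separated with care. The decoupling succeeds precisely because the Radon–Nikodym comparison is uniform over $K$ and the shadows are monotone in the radius, which together let me replace the varying measure $\mu_p$ and ball $\Ball(p, R_0)$ by the fixed $\mu_{p_0}$ and $\Ball(p_0, R_1)$ without invoking any (more delicate) continuity of the map $p \mapsto \mu_p(\pr_x(\Ball(p, R)))$.
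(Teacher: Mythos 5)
Your proof is correct and follows essentially the same route as the paper's: compactness of $\cl{X}$ plus \lemref{knieper98 2.3 2} at a fixed basepoint, minimality of the $\G$-action giving $\supp\mu_{p_0} = \bd X$, the Radon--Nikodym comparison together with shadow monotonicity to handle $p \in K$, and $\G$-equivariance to handle general $p$. The only difference is cosmetic---you perform the equivariance reduction first while the paper does it last, and you use $\rho = \sup_{q \in K} d(q,p_0)$ in place of $\diam K$.
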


\begin{proof}
For each $p \in X$, by compactness of $\cl{X}$ we obtain by \lemref{knieper98 2.3 2} a finite collection of nonempty open sets $V_i \subset \bd X$ and constants $R_i > 0$ such that for every $x \in \cl{X}$, some $V_i \subseteq \pr_{x}(\Ball(p,R_i))$.
Now fix a compact set $K \subset X$ such that $X = \G K := \bigcup_{\g \in \G} \g K$, and fix $p_0 \in K$.
Set $r_0 = \max R_i$, $R_0 = \diam K + r_0$, and $\ell = e^{-\delta_\G \diam K} \min \mu_{p_0}(V_i)$.
Since the $\G$-action on $\bd X$ is minimal, $\supp \mu_{p_0} = \bd X$.
Thus $\ell > 0$.
We now have
$\pr_{x}(\Ball(p,R_0)) \supseteq \pr_{x}(\Ball(p_0,r_0))$
for all $x \in \cl{X}$, and thus
\[\mu_{p}(\pr_{x}(\Ball(p,R_0)))
\ge \mu_{p}(V_i)
\ge \ell.\]
Now let $x \in \cl{X}$ and $p \in X$ be arbitrary.
Find $\g \in \G$ such that $\g p \in K$.
Then
\[\mu_p(\pr_{x} \Ball(p, R_0))
= \mu_{\g p}(\g \pr_{x} \Ball(p, R_0))
= \mu_{\g p}(\pr_{\g x} \Ball(\g p, R_0))
\ge \ell. \qedhere\]
\end{proof}

\begin{lemma}					\label{knieper98 2.3 6}
There is a constant $b > 1$ such that for all $p,x \in X$ and $\xi \in \pr_x(p)$,
\begin{equation*}
\frac{1}{b} e^{-\delta_\G d(p,x)} \le \mu_p(\pr_\xi(\Ball(x,R_0))) \le b e^{-\delta_\G d(p,x)},
\end{equation*}
where $R_0 > 0$ is the constant from \lemref{knieper98 2.3 5}.
\end{lemma}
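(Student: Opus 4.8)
The plan is to use the conformal density to change basepoint from $p$ to $x$, reducing everything to one geometric estimate. Writing $S = \pr_\xi(\Ball(x,R_0))$ and using the Radon--Nikodym derivative of \defref{conformal density}\itemref{Radon-Nikodym},
\[\mu_p(S) = \int_S \frac{d\mu_p}{d\mu_x}(\eta)\, d\mu_x(\eta) = \int_S e^{-\delta_\G b_\eta(p,x)}\, d\mu_x(\eta).\]
So it suffices to (i) bound $\mu_x(S)$ above and below by positive constants independent of $p,x,\xi$, and (ii) show $b_\eta(p,x)$ differs from $d(p,x)$ by at most a uniform constant for every $\eta \in S$. Granting these, the integrand is pinched between fixed multiples of $e^{-\delta_\G d(p,x)}$, and the two-sided bound follows with $b$ a suitable constant, enlarged past $1$ if necessary.

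For the measure bounds I would get the lower estimate directly from \lemref{knieper98 2.3 5}, applied with basepoint $x$ and direction $\xi \in \cl X$: it gives $\mu_x(S) = \mu_x(\pr_\xi(\Ball(x,R_0))) \ge \ell$. For the upper estimate I would bound the total mass $\norm{\mu_x}$ uniformly. Fix a compact $K$ with $\G K = X$ and a point $p_0 \in K$. Total mass is preserved by the pushforwards $\mu_{\g x} = \g_* \mu_x$, so $\norm{\mu_x}$ is $\G$-invariant and it is enough to bound it for $x \in K$. There the Radon--Nikodym formula and $\abs{b_\eta(x,p_0)} \le d(x,p_0) \le \diam K$ give $\norm{\mu_x} = \int e^{-\delta_\G b_\eta(x,p_0)}\, d\mu_{p_0} \le e^{\delta_\G \diam K}\norm{\mu_{p_0}} =: M$, whence $\mu_x(S) \le \norm{\mu_x} \le M$ for all $x$.

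The crux is the Busemann estimate, where the hypotheses $\xi \in \pr_x(p)$ and $\eta \in S$ enter. Set $d = d(p,x)$. The hypothesis $\xi \in \pr_x(p)$ says that $p$ lies on the ray $[x,\xi)$ at parameter $d$, i.e.\ $[x,\xi)(d) = p$. Given $\eta \in S$, choose a geodesic $w$ with $w^- = \xi$, $w^+ = \eta$ meeting $\Ball(x,R_0)$, reparametrized so that $w(0) \in \Ball(x,R_0)$. The backward ray $t \mapsto w(-t)$ and the ray $[x,\xi)$ both limit to $\xi$ and start within $R_0$ of each other, so by convexity of the $\CAT(0)$ metric their distance, being also bounded, is non-increasing in $t$; evaluating at $t = d$ gives $d(p,w(-d)) \le R_0$. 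Expanding by the cocycle property,
\[b_\eta(p,x) = b_\eta(p,w(-d)) + b_\eta(w(-d),w(0)) + b_\eta(w(0),x),\]
the outer terms are bounded in absolute value by $d(p,w(-d)) \le R_0$ and $d(w(0),x) \le R_0$ since $b_\eta$ is $1$-Lipschitz, while the middle term equals $d$, because moving from $w(0)$ a distance $d$ toward $w^- = \xi$ (away from $\eta$) raises $b_\eta$ by exactly $d$. Hence $\abs{b_\eta(p,x) - d(p,x)} \le 2R_0$ for every $\eta \in S$, which is step (ii).

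The main obstacle is precisely this geometric estimate: one must correctly track the alignment of $x$, $p$, $\xi$ and the witnessing geodesic $w$, and invoke convexity of the $\CAT(0)$ metric to control two rays asymptotic to $\xi$. Once the $\pm 2R_0$ control on $b_\eta(p,x)$ is in hand, assembling the final constant $b = e^{2\delta_\G R_0}\max\set{M,\, 1/\ell}$ (replaced by any larger value exceeding $1$ if needed) is routine.
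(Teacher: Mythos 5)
Your proposal is correct and takes essentially the same route as the paper's proof: change basepoint via the Radon--Nikodym derivative, pinch $b_\eta(p,x)$ within $2R_0$ of $d(p,x)$ by a convexity argument for geodesics asymptotic to $\xi$, then bound $\mu_x(\pr_\xi(\Ball(x,R_0)))$ below by $\ell$ via \lemref{knieper98 2.3 5} and above by a uniform bound on $\norm{\mu_x}$. The only cosmetic difference is that the paper applies $1$-Lipschitzness in both variables to two bi-infinite geodesics with common negative endpoint $\xi$, where you use rays together with the cocycle property; the estimates are identical.
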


\begin{proof}
Let $p,x \in X$ and $\xi \in \pr_x(p)$.
Then (reversing the usual geodesic) there is some $v \in SX$ such that $v(0) = p$, $v(r) = x$, and $v^- = \xi$, where $r = d(p,x)$.

Let $\eta \in \pr_{\xi}(\Ball(x,R_0))$, and find $w \in SX$ such that $(w^-,w^+) = (\xi,\eta)$ and $w(r) \in \Ball(x,R_0)$.
Write $y = w(r)$ and $q = w(0)$.
By convexity of $t \mapsto d(v(t),w(t))$, we find $d(p,q) \le d(x,y) < R_0$.
By $1$-Lipschitzness of $b_{\eta}$ in each variable, we see from
$b_{\eta}(q,y) = r$ that $b_{\eta}(p,x) \in (r - 2R_0, r + 2R_0)$
for all $\eta \in \pr_{\xi}(\Ball(x,R_0))$.

Hence
\[\mu_p(\pr_{\xi}(\Ball(x,R_0))) = \int_{\pr_{\xi}(\Ball(x,R_0))} e^{-\delta_\G b_{\eta}(p,x)} \, d\mu_x(\eta)\]
lies between $e^{-2\delta_\G R_0} e^{-\delta_\G r} \mu_x(\pr_{\xi}(\Ball(x,R_0)))$ and $e^{2\delta_\G R_0} e^{-\delta_\G r} \mu_x(\pr_{\xi}(\Ball(x,R_0)))$.
This gives us the bounds
\begin{align*}
\ell e^{-2\delta_\G R_0}
\le e^{-2\delta_\G R_0} \mu_x(\pr_{\xi}(\Ball(x,R_0)))
&\le e^{\delta_\G r} \mu_p(\pr_{\xi}(\Ball(x,R_0))) \\
&\le e^{2\delta_\G R_0} \mu_x(\pr_{\xi}(\Ball(x,R_0)))
\le e^{2\delta_\G R_0} \norm{\mu_x},
\end{align*}
and therefore
\begin{equation*}
\ell e^{-2\delta_\G R_0} e^{-\delta_\G d(p,x)}
\le \mu_p(\pr_{\xi}(\Ball(x,R_0)))
\le \norm{\mu_x} e^{2\delta_\G R_0} e^{-\delta_\G d(p,x)}.
\end{equation*}
Notice $\norm{\mu_x} \le e^{2\delta_\G \diam(\modG{X})} \norm{\mu_q}$ for all $q \in X$ by \defref{conformal density}.
\end{proof}

\section{Maximal entropy}

Let $\nu$ be a probability measure on a space $Z$.
The \defn{entropy} of a measurable partition $\mathcal{A} = \set{A_1, \dotsc, A_m}$ of $Z$ is
\[H_{\nu}(\mathcal{A})
= \sum_{i=1}^{m} -\nu(A_i) \log \nu(A_i).\]
Let $T \colon Z \to Z$ be a measure-preserving transformation.
For the partitions
\[\mathcal{A}_{T}^{(n)}
:= \setp{A_{j_1} \cap T^{-1} A_{j_2} \cap \dotsb \cap T^{-(n-1)} A_{j_{n-1}}}{1 \le j_1, j_2, \dotsc, j_{n-1} \le m},\]
$n \mapsto \frac{1}{n} H_{\nu}(\mathcal{A}_T^{(n)})$ is a subadditive function.
Hence $\frac{1}{n} H_{\nu}(\mathcal{A}_T^{(n)})$ decreases to a limit
\[h_{\nu}(T,\mathcal{A})
= \lim_{n \to \infty} \frac{1}{n} H_{\nu}(\mathcal{A}_T^{(n)})
= \inf_{n \in \N} \frac{1}{n} H_{\nu}(\mathcal{A}_T^{(n)}),\]
called the \defn{entropy of $T$ with respect to $\mathcal{A}$}.
The \defn{measure-theoretic entropy} of $T$ is
\[h_{\nu}(T) = \sup_{\mathcal{A}} h_{\nu}(T,\mathcal{A}).\]

As with topological entropy, $h_{\nu}(T^k) = k h_{\nu}(T^k)$ for all $k > 0$, and in fact if $T$ is invertible then $h_{\nu}(T^k) = \abs{k} h_{\nu}(T^k)$ for all $k \in \Z$.
Thus, for a measure-preserving flow $\phi = (\phi^t)_{t \in \R} \colon Z \to Z$, we define $h_{\nu}(\phi) = h_{\nu}(\phi^1)$.
And then $h_{\nu}(\phi^r) = \abs{r} h_{\nu}(\phi)$ for all $r \in \R$.

Now let $Z$ be a metric space and $T \colon Z \to Z$ be continuous.
Write $\mathcal{M}(T)$ for the set of $T$-invariant Borel probability measures on $Z$.
If $Z$ is compact then
\[\htop(T) = \sup_{\nu \in \mathcal{M}(T)} h_{\nu}(T).\]
(This result is called the variational principle.)
A \defn{measure of maximal entropy} is a measure $\nu \in \mathcal{M}(T)$ such that $h_{\nu}(T) = \htop(T)$, i.e. $h_{\nu}(T)$ realizes the supremum.

\begin{convention}
From now on, we will write $\phi = g_\G^1$, the time-one map of the geodesic flow on $\modG{SX}$.
On occasion, we will write $\tilde{\phi}$ for $g^1$ on $SX$.
\end{convention}

Recall the metric $d_{SX}$ on $SX$ is $d_{SX}(v,w) = \sup_{t \in \R} e^{-\abs{t}} d_{X}(v(t),w(t))$.
The quotient metric $d_{\modG{X}}$ on $\modG{X}$ is
\[d_{\modG{X}}(\piG{x},\piG{y}) = \inf_{\g \in \G} d_{X}(x,\g y),\]
and the quotient metric $d_{\modG{SX}}$ on $\modG{SX}$ is
\[d_{\modG{SX}}(\piGS{v},\piGS{w}) = \inf_{\g \in \G} d_{SX}(v,\g w).\]
We shall write $d$ for all these metrics---$d_{X}$, $d_{SX}$, $d_{\modG{X}}$, and $d_{\modG{SX}}$---as long as the context is clear.

For $k \ge 0$, we also define the metric $d_k$ on $SX$ by
\[d_k(v,w) = \sup_{0 \le t \le k} d(g^t v, g^t w),\]
and write $d_k$ for the corresponding quotient metric on $\modG{SX}$.
(Since $\modG{SX}$ is compact, $\htop(\phi)$ is the same whether computed using the metric $d$ or $d_k$.)

\begin{lemma}	[cf.~ Lemma 2.5 of \cite{knieper98}]	\label{knieper98 2.5}
Let $0 < \epsilon < \min \set{R_0, \injrad(X,\G)}$, where $R_0$ is the constant from \lemref{knieper98 2.3 5}.
Let $\mathcal{A} = \set{A_1, \dotsc, A_m}$ be a measurable partition of $\modG{SX}$ such that $\diam_{d_1} \mathcal{A} = \max_{A_i \in \mathcal{A}} \diam_{d_1} A_i < \epsilon$.
Then there is some constant $a > 0$ such that
\[m_\G(\alpha) \le e^{-\delta_\G n} a\]
for all $\alpha \in \mathcal{A}_{\phi}^{(n)}$.
\end{lemma}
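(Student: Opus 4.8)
The plan is to show that each atom $\alpha \in \mathcal{A}_\phi^{(n)}$ is contained in a dynamical ball of radius $\epsilon$ and order $n$, lift it to $SX$, and estimate its Bowen--Margulis measure using the product structure $m = \mu \times \lambda$ together with the shadow estimate of \lemref{knieper98 2.3 6}. First I would record the geometric meaning of $\alpha$: if $u, u'$ lie in a common atom, then $\phi^i u$ and $\phi^i u'$ share a partition element $A_{j_{i+1}}$ for $0 \le i < n$, so $d_1(\phi^i u, \phi^i u') < \epsilon$; concatenating over $i$ gives $d_{\modG{SX}}(g^s u, g^s u') < \epsilon$ for all $s \in [0,n]$, i.e.\ $\alpha$ has $d_n$-diameter less than $\epsilon$. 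Fix a lift $v_0 \in SX$ of a point of $\alpha$. Since $\epsilon < \injrad(X,\G)$, each $u \in \alpha$ has a unique lift $w$ with $d_{SX}(v_0, w) < \epsilon$; writing $f(t) = d_X(v_0(t), w(t))$, which is convex with $f(0) < \epsilon$, a continuity argument comparing $f$ with the (always $< \epsilon$) quotient distance shows $f(t) < \epsilon$ for all $t \in [0,n]$. Collecting these lifts gives a Borel set $\tilde\alpha$ on which $\piGS$ is injective (here I use that $\G$ acts freely, so no nontrivial element of $\G$ fixes the footpoint of such a lift, which lies in $\Ball(v_0(0),\epsilon)$), whence $m_\G(\alpha) = m(\tilde\alpha)$.

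It then remains to bound $m(\tilde\alpha)$. Because $m$ is supported on $\Zerowidth$ and $\pi_p$ restricts to a homeomorphism $\Zerowidth \to \emap(\Zerowidth) \times \R$ carrying $m$ to $\mu \times \lambda$, I would estimate $\pi_p(\tilde\alpha \cap \Zerowidth)$ fiberwise. Every $w \in \tilde\alpha$ has footpoint in $\Ball(v_0(0),\epsilon)$, so for fixed endpoints $(\xi, \eta)$ the flow parameter ranges over an interval of length at most $2\epsilon$ (the footpoint moves at unit speed, and the third coordinate $b_{\xi}(w(0),p)$ changes at unit rate), giving $m(\tilde\alpha) \le 2\epsilon \cdot \mu(E)$ where $E = \emap(\tilde\alpha \cap \Zerowidth)$. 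Choosing the basepoint $p = v_0(0)$, the factor $e^{-\delta_\G \beta_p(\xi,\eta)}$ in the definition of $\mu$ satisfies $\abs{\beta_p(\xi,\eta)} \le 2\epsilon$ on $E$ (both footpoints lie within $\epsilon$ of $p$), so $\mu(E) \le e^{2\delta_\G \epsilon}(\mu_p \times \mu_p)(E) \le e^{2\delta_\G\epsilon}\, \mu_p(E^-) \sup_{\xi} \mu_p(E_\xi^+)$, with $E^- = \setp{\xi}{(\xi,\eta) \in E}$ and $E_\xi^+ = \setp{\eta}{(\xi,\eta) \in E}$.

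The exponential decay comes from $\mu_p(E_\xi^+)$. Each $w \in \tilde\alpha$ also has $w(n) \in \Ball(v_0(n),\epsilon) \subseteq \Ball(v_0(n),R_0)$, so $E_\xi^+ \subseteq \pr_\xi(\Ball(v_0(n), R_0))$; since the geodesic from $v_0(n)$ back to $\xi$ passes within a bounded distance of $p = v_0(0)$, \lemref{knieper98 2.3 6} (applied at a basepoint on that geodesic and converted to $\mu_p$ by a bounded Radon--Nikodym factor, using \defref{conformal density}) yields $\mu_p(E_\xi^+) \le b' e^{-\delta_\G n}$ for a constant $b'$, using $d(v_0(0), v_0(n)) = n$. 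For the first factor, $\mu_p(E^-) \le \norm{\mu_p}$, which is uniformly bounded over all admissible $p = v_0(0)$ by the estimate $\norm{\mu_x} \le e^{2\delta_\G \diam(\modG{X})}\norm{\mu_q}$. Multiplying the three bounds produces a constant $a > 0$, independent of $n$ and of $\alpha$, with $m_\G(\alpha) = m(\tilde\alpha) \le a\, e^{-\delta_\G n}$.

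I expect the main obstacle to be the bookkeeping in the shadow estimate: \lemref{knieper98 2.3 6} requires the alignment hypothesis $\xi \in \pr_x(p)$ and a fixed radius $R_0$, whereas in our situation $\xi = w^-$ is only approximately aligned with $v_0(0)$ and $v_0(n)$, and the natural shadows are centered at the moving points $w(n)$. Handling this cleanly---choosing the basepoint on the geodesic from $v_0(n)$ to $\xi$, and absorbing the discrepancies between $w(0), w(n)$ and $v_0(0), v_0(n)$ into bounded multiplicative constants---is the technical heart of the argument. A secondary subtlety is the convexity/continuity argument guaranteeing that the chosen lift of $\alpha$ stays within $\epsilon$ of $v_0$ throughout $[0,n]$, rather than merely in the quotient.
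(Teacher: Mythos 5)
Your proposal is correct and takes essentially the same route as the paper's proof: lift the atom to a set $\tilde\alpha \subset SX$ of $d_n$-diameter less than $\epsilon$, use the product structure $m = \mu \times \lambda$ to split off a flow-time factor of order $\epsilon$, and obtain the decay $e^{-\delta_\G n}$ by applying \lemref{knieper98 2.3 6} at an auxiliary basepoint $q = u(0)$ on a geodesic $u$ through $v(n)$ with $u^- = \xi$, converting between $\mu_p$ and $\mu_q$ by a bounded Radon--Nikodym factor since $d(p,q) < 2\epsilon$ by convexity. The differences are cosmetic (a factor $2\epsilon$ in place of $\epsilon$, and more detail on the lifting step, which the paper simply asserts).
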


\begin{proof}

Let $\alpha \in \mathcal{A}_{\phi}^{(n)}$ be arbitrary.
Since $\epsilon < \injrad(X,\G)$, we may lift $\alpha$ to a measurable set $\tilde{\alpha} \subset SX$ with $\diam_{d_n}(\tilde{\alpha}) < \epsilon$. 
Choose $v \in \tilde{\alpha}$, and let $p = v(0)$ and $x = v(n)$.
Let $w \in \tilde{\alpha}$ be arbitrary,
and put $\xi = w^-$.
Find $u \in SX$ such that $u(n) = x$ and $u^- = \xi$.
Let $q = u(0)$.
Since $d(v(0),w(0)) < \epsilon$ and $d(v(n),w(n)) < \epsilon$, we obtain
\[d(q,p)
  < d(u(0),w(0)) + \epsilon
\le d(u(n),w(n)) + \epsilon
  = d(v(n),w(n)) + \epsilon
  < 2\epsilon\]
by convexity.
Thus
$d(q,x) \ge d(x,p) - d(p,q) > n - 2\epsilon$,
and therefore
\[\mu_p(\pr_{\xi}(\Ball(x,\epsilon)))
\le e^{\delta_\G d(p,q)} \mu_q(\pr_{\xi}(\Ball(x,\epsilon)))
\le b e^{-\delta_\G d(q,x)}
\le e^{2 \delta_\G \epsilon} b e^{-\delta_\G n},\]
where $b > 0$ is the constant from \lemref{knieper98 2.3 6}.

Since $w$ was arbitrary, it follows from $d(q,p) < 2\epsilon$ that
$w^- = \xi \in \pr_{x}(\Ball(p,2\epsilon))$ for all $w \in \tilde{\alpha}$.
Thus
\[\emap(\tilde{\alpha}) \subseteq \bigcup_{\xi \in \pr_{x}(\Ball(p,2\epsilon))} \set{\xi} \times \pr_{\xi}(\Ball(x,\epsilon)).\]
Since no $w \in \tilde{\alpha}$ can spend more than $\epsilon$ time in $\tilde{\alpha}$, we find therefore that
\[m(\tilde{\alpha})
\le \epsilon \cdot \int_{\pr_{x}(\Ball(p,2\epsilon))} \left( \int_{\pr_{\xi}(\Ball(x,\epsilon))} e^{-\delta_\G (b_{\xi} + b_{\eta})(w(0),p)} \, d\mu_p(\eta) \right) d\mu_p(\xi),\]
where each $w \in \emap^{-1}(\xi,\eta)$.
But for all $w \in \tilde{\alpha}$, we know $w(0) \in \Ball(p,\epsilon)$, hence $(b_{\xi} + b_{\eta})(w(0),p) < 2\epsilon$ for all $w \in \tilde{\alpha}$.
Thus
\[m(\tilde{\alpha})
\le \epsilon e^{2 \delta_\G \epsilon} e^{2 \delta_\G \epsilon} b e^{-\delta_\G n} \int_{\pr_{x}(\Ball(p,2\epsilon))} \, d\mu_p(\xi)
\le \epsilon e^{2 \delta_\G (2\epsilon + R_0)} b e^{-\delta_\G n} \norm{\mu_p},
\]
and therefore $m_\G(\alpha) = m(\tilde{\alpha}) \le \epsilon e^{2 \delta_\G (2\epsilon + R_0)} b \norm{\mu_p}$.
Fixing an arbitrary $p_0 \in X$, we have $\norm{\mu_p} \le e^{2\delta_\G \diam(\modG{X})} \norm{\mu_{p_0}}$, which proves the lemma.
\end{proof}

\begin{theorem}	[cf. Theorem 2.6 of \cite{knieper98}]	\label{knieper98 2.6}
The Bowen-Margulis measure $m_\G$ is a measure of maximal entropy for the geodesic flow on $\modG{SX}$, i.e.,
\[h_{m_\G}(\phi) = \htop(\phi) = \delta_\G.\]
\end{theorem}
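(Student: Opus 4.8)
The plan is to combine the atom estimate of \lemref{knieper98 2.5} with the variational principle to compute $h_{m_\G}(\phi)$ exactly. Since $\G$ acts cocompactly, $\modG{SX}$ is compact, so the variational principle (see \cite{walters}) gives $h_{m_\G}(\phi) \le \htop(\phi)$, and \thmref{main manning} already identifies $\htop(\phi) = \delta_\G$. Thus it will suffice to prove the reverse inequality $h_{m_\G}(\phi) \ge \delta_\G$; once this is established, $h_{m_\G}(\phi) = \htop(\phi) = \delta_\G$, and since $m_\G$ is a $\phi$-invariant Borel probability measure (with $\norm{m_\G} = 1$), it is a measure of maximal entropy.

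For the lower bound I would first fix $\epsilon$ with $0 < \epsilon < \min \set{R_0, \injrad(X,\G)}$ and choose a finite measurable partition $\mathcal{A} = \set{A_1, \dotsc, A_m}$ of $\modG{SX}$ with $\diam_{d_1} \mathcal{A} < \epsilon$. Such a partition exists because $\modG{SX}$ is compact and $d_1$ is bi-Lipschitz equivalent to $d$ (the flow $g^t$ is $e^{\abs{t}}$-Lipschitz, so $d \le d_1 \le e\, d$), so one may cover by finitely many small $d_1$-balls and disjointify. \lemref{knieper98 2.5} then provides a constant $a > 0$ with $m_\G(\alpha) \le a\, e^{-\delta_\G n}$ for every atom $\alpha \in \mathcal{A}_\phi^{(n)}$.

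The key computation is the resulting lower bound on partition entropy. Since $-\log$ is decreasing, each atom satisfies $-\log m_\G(\alpha) \ge \delta_\G n - \log a$; multiplying by $m_\G(\alpha) \ge 0$ (using the convention $0 \log 0 = 0$) and summing over the atoms, which partition a probability space, gives
\[H_{m_\G}(\mathcal{A}_\phi^{(n)}) = \sum_{\alpha} -m_\G(\alpha) \log m_\G(\alpha) \ge (\delta_\G n - \log a) \sum_{\alpha} m_\G(\alpha) = \delta_\G n - \log a.\]
Dividing by $n$ and letting $n \to \infty$ yields $h_{m_\G}(\phi, \mathcal{A}) \ge \delta_\G$, whence $h_{m_\G}(\phi) = \sup_{\mathcal{A}'} h_{m_\G}(\phi, \mathcal{A}') \ge \delta_\G$, completing the argument.

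I do not expect a serious obstacle at this stage: all of the geometric and measure-theoretic difficulty has already been absorbed into \lemref{knieper98 2.5} (the uniform exponential decay of the atom measures) and into \thmref{main manning} (the identification $\htop(\phi) = \delta_\G$). The only points needing any care are verifying that a partition of arbitrarily small $d_1$-diameter exists and that the indexing of $\mathcal{A}_\phi^{(n)}$ is consistent with the exponent $n$ appearing in \lemref{knieper98 2.5}, both of which are routine.
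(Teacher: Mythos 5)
Your proposal is correct and follows essentially the same route as the paper's own proof: the lower bound $h_{m_\G}(\phi,\mathcal{A}) \ge \delta_\G$ comes from the atom estimate of \lemref{knieper98 2.5} via the standard partition-entropy computation, and the upper bound comes from the variational principle on the compact space $\modG{SX}$ together with $\htop(\phi) = \delta_\G$ from \thmref{main manning}. The only difference is that you explicitly verify the existence of a finite partition of small $d_1$-diameter (via the bi-Lipschitz comparison $d \le d_1 \le e\,d$), a routine point the paper leaves implicit.
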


\begin{proof}
Let $\mathcal{A}$ be a measurable partition of $\modG{SX}$ with $\diam_{d_1} \mathcal{A} < \epsilon$.
By \lemref{knieper98 2.5},
\[H(\mathcal{A}_{\phi}^{(n)})
= \sum_{\alpha \in \mathcal{A}_{\phi}^{(n)}} - \log(m_\G(\alpha)) m_\G(\alpha)
\ge (\delta_\G n - \log a) \sum_{\alpha \in \mathcal{A}_{\phi}^{(n)}} m_\G(\alpha)
= \delta_\G n - \log a.\]
Thus $h(\phi, \mathcal{A}) \ge \delta_\G$.
By \thmref{main manning}, we now have
$\delta_\G = \htop(\phi) \ge h_{m_\G}(\phi) \ge h_{m_\G}(\phi, \mathcal{A})\ge \delta_\G$, proving \thmref{knieper98 2.6}.
\end{proof}

\section{Entropy expansive}

Let $T \colon V \to V$ be a self-homeomorphism of a metric space $V$.
For $x \in X$ and $\epsilon > 0$, define the \defn{Bowen ball}
\begin{equation*}
Z_{T,\epsilon}(x)
= \setp{y \in V}{d_k(T^{n} x, T^{n} y) \le \epsilon \text{ for all } n \in \Z}.
\end{equation*}
We say $T$ is \defn{$h$-expansive} (or, \defn{entropy expansive}) if there is some $\epsilon > 0$ (called an \defn{$h$-expansivity constant} for $T$) such that
\begin{equation*}
\sup_{x \in V} \htop(f,Z_{T,\epsilon}(x)) = 0.
\end{equation*}

The following result is due to Rufus Bowen \cite[Theorem 3.5]{bowen72}.

\begin{theorem} [Bowen]				\label{bowen}
Let $V$ be a finite-dimensional compact metric space and $\mathcal{A} = \set{A_1, \dotsc, A_m}$ a Borel partition with $\diam \mathcal{A} < \epsilon$.
Let $\nu$ be a $T$-invariant Borel probability measure on $V$.
If $T$ is $h$-expansive with expansivity constant $\epsilon$, then
\[h_{\nu}(T) = h_{\nu}(T,\mathcal{A}).\]
\end{theorem}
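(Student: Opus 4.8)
The plan is to recall the structure of Bowen's argument. Since $h_\nu(T) = \sup_{\mathcal{B}} h_\nu(T,\mathcal{B})$ as $\mathcal{B}$ ranges over finite Borel partitions, and since $h_\nu(T,\mathcal{A}) \le h_\nu(T)$ holds automatically, it suffices to prove $h_\nu(T,\mathcal{B}) \le h_\nu(T,\mathcal{A})$ for every finite $\mathcal{B}$. First I would invoke the standard entropy identity $h_\nu(T, \mathcal{A} \vee \mathcal{B}) = h_\nu(T,\mathcal{A}) + h_\nu(T, \mathcal{B} \mid \mathcal{A})$, where the conditional dynamical entropy $h_\nu(T, \mathcal{B} \mid \mathcal{A}) := \lim_n \tfrac{1}{n} H_\nu(\mathcal{B}_T^{(n)} \mid \mathcal{A}_T^{(n)})$ exists because both unconditional entropies do. Combined with monotonicity under refinement, $h_\nu(T,\mathcal{B}) \le h_\nu(T, \mathcal{A} \vee \mathcal{B})$, the whole theorem reduces to the single claim that $h_\nu(T, \mathcal{B} \mid \mathcal{A}) = 0$ for every finite $\mathcal{B}$.

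The crux is therefore the inequality
\[ h_\nu(T, \mathcal{B} \mid \mathcal{A}) \le \sup_{x \in V} \htop(T, Z_{T,\epsilon}(x)), \]
whose right-hand side vanishes precisely because $T$ is $h$-expansive with constant $\epsilon$. I would establish it in two movements. The first is a reduction of the measure-theoretic quantity to a purely topological count: writing $H_\nu(\mathcal{B}_T^{(n)} \mid \mathcal{A}_T^{(n)}) = \sum_{C} \nu(C)\, H_{\nu_C}(\mathcal{B}_T^{(n)})$ over the atoms $C$ of $\mathcal{A}_T^{(n)}$, and using $H_{\nu_C}(\mathcal{B}_T^{(n)}) \le \log N_n$ with $N_n$ the maximum, over atoms $C$, of the number of atoms of $\mathcal{B}_T^{(n)}$ that meet $C$, one gets $\tfrac{1}{n} H_\nu(\mathcal{B}_T^{(n)} \mid \mathcal{A}_T^{(n)}) \le \tfrac{1}{n}\log N_n$. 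So the remaining task is to bound $\limsup_n \tfrac{1}{n} \log N_n$ by the topological entropy carried on Bowen balls.

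The second movement is the main obstacle. The geometric input is that $\diam \mathcal{A} < \epsilon$ forces each atom of the bi-infinite refinement $\bigcap_{i \in \Z} T^{-i}\mathcal{A}(T^i x)$ to lie inside the Bowen ball $Z_{T,\epsilon}(x)$; the distinct $\mathcal{B}$-names realized over a fixed $\mathcal{A}_T^{(n)}$-atom are then controlled, after fixing a resolution $\delta$ adapted to $\mathcal{B}$, by $(n,\delta)$-separated subsets of such a ball. The delicate point is that an $\mathcal{A}_T^{(n)}$-atom only pins the orbit over the finite window $0 \le i < n$, whereas a Bowen ball constrains all $i \in \Z$; bridging this gap requires a compactness argument that extracts, from any sequence of $\mathcal{A}_T^{(n)}$-atoms carrying exponentially many $\mathcal{B}$-names, a single point $x$ whose Bowen ball supports positive topological entropy, contradicting $h$-expansivity. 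This is exactly where the hypothesis that $V$ is finite dimensional (for us $V = \modG{SX}$, supplied by \corref{SXG is fin-dim}) is used: in a finite-dimensional compact metric space one may refine $\mathcal{A}$ and $\mathcal{B}$ to open covers of mesh $< \epsilon$ and multiplicity at most $\dim V + 1$, so that each $\epsilon$-ball meets boundedly many cover elements and the passage between partition-name counts and separated-set counts costs only a factor that is subexponential in $n$, keeping the comparison of $\log N_n$ with $\htop(T, Z_{T,\epsilon}(x))$ tight.

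Once this inequality is in hand, $h$-expansivity gives $h_\nu(T, \mathcal{B} \mid \mathcal{A}) = 0$ for every finite $\mathcal{B}$, whence $h_\nu(T, \mathcal{B}) \le h_\nu(T,\mathcal{A}) + 0$, and taking the supremum over $\mathcal{B}$ yields $h_\nu(T) = h_\nu(T,\mathcal{A})$, which is the assertion. I expect the first movement and the reduction to be routine bookkeeping with conditional entropy, and essentially all the difficulty to reside in the finite-dimensional compactness argument of the third paragraph.
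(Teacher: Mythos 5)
You should first note that the paper contains no proof of this statement to compare against: it is quoted as Theorem 3.5 of \cite{bowen72}, and the only internal work the paper does toward it is establishing the finite-dimensionality hypothesis (\corref{SXG is fin-dim}) so that Bowen's theorem applies to $\modG{SX}$. So your attempt must be judged against Bowen's original argument. Your first movement --- the identity $h_\nu(T,\mathcal{A}\vee\mathcal{B}) = h_\nu(T,\mathcal{A}) + h_\nu(T,\mathcal{B}\mid\mathcal{A})$, the reduction to $h_\nu(T,\mathcal{B}\mid\mathcal{A})=0$, and the bound of $\tfrac{1}{n}H_\nu(\mathcal{B}_T^{(n)}\mid\mathcal{A}_T^{(n)})$ by $\tfrac{1}{n}\log N_n$ --- is correct, routine, and is indeed how Bowen's proof is organized.

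The second movement, however, contains a claim that is false as stated, plus a black box, and as written the proof does not close. The false claim is that bounded multiplicity makes the passage from name counts to separated-set counts cost ``only a factor that is subexponential in $n$.'' What finite dimension actually provides is a constant $M$ (of order $2^{\dim V+1}$) such that every sufficiently small ball meets at most $M$ atoms of the auxiliary partition; this bound must be paid \emph{once per time step} of an itinerary, so the number of $\mathcal{B}_T^{(n)}$-atoms compatible with shadowing a single spanning orbit is bounded by $M^n$, which is exponential. Carried through, your argument yields only $h_\nu(T,\mathcal{B}\mid\mathcal{A}) \le \sup_x \htop\paren{T, Z_{T,\epsilon}(x)} + \log M$, hence $h_\nu(T) \le h_\nu(T,\mathcal{A}) + \log M$, which is vacuous for the theorem. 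The missing idea that removes the constant is to apply the resulting inequality to the iterates $T^N$, using the partition $\bigvee_{i=0}^{N-1}T^{-i}\mathcal{A}$ (which still has diameter $\le \epsilon$ in the metric $d_N$) and noting that $h_\nu$ and the Bowen-ball entropies scale linearly in $N$ while $\log M$ does not; dividing by $N$ and letting $N \to \infty$ then kills the constant. Separately, the ``compactness argument'' bridging finite windows to bi-infinite Bowen balls is asserted rather than given, and the contradiction form you sketch (extract a single $x$ with $\htop(T,Z_{T,\epsilon}(x))>0$) does not work verbatim: the exponentially many names live in sets $\bigcap_{i<n}T^{-i}\cl{A_{j_i}}$ that vary with $n$, and one needs Bowen's actual lemma that for compacta $K_m \downarrow K$ the spanning numbers satisfy $r_n(\delta,K_m) \le r_n(\delta/2,K)$ for large $m$, applied along the compact space of infinite itineraries. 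Finally, a minor point: your counting method establishes the vanishing only for the special bounded-multiplicity partitions $\mathcal{B}$, so you must finish with the standard fact that partitions of diameter tending to $0$ compute $h_\nu(T)$. The strategy itself is sound --- it is Bowen's --- but these are real gaps, and the subexponential claim in particular would fail.
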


\begin{lemma} [cf.~ Proposition 3.3 of \cite{knieper98}]	\label{knieper98 3.3}
The time-$k$ map $\phi^k = g^k_\G$ of the geodesic flow on $\modG{SX}$ is $h$-expansive with $h$-expansivity constant $\epsilon = \frac{1}{3} \injrad(X,\G)$.
\end{lemma}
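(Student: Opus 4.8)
The goal is to find $\epsilon > 0$ so that every Bowen ball $Z_{\phi,\epsilon}(\piGS v)$ has zero topological entropy under the time-one map $\phi$. The claim is that $\epsilon = \frac13 \injrad(X,\G)$ works. The first step is to understand the Bowen balls upstairs in $SX$. Since $\epsilon < \injrad(X,\G)$, a point $\piGS w$ lies in $Z_{\phi,\epsilon}(\piGS v)$ precisely when, after choosing an appropriate lift, we have $d(g^n v, g^n w) \le \epsilon$ (in the $d_1$-metric, hence controlling $g^t v, g^t w$ for all $t \in \R$) for every $n \in \Z$. Because the injectivity radius exceeds $\epsilon$, the infimum defining the quotient metric is realized by a single lift, so I would fix the lift $\tilde v$ of $\piGS v$ and study $\widetilde{Z} := \setp{w \in SX}{d(g^t v, g^t w) \le \epsilon \text{ for all } t \in \R}$, noting that $\piGS$ restricted to $\widetilde Z$ is an isometry onto the Bowen ball.

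The second step is the geometric heart: I would show that $\widetilde Z$ consists of geodesics that are uniformly close to $v$ for \emph{all} time, and use convexity of the $\CAT(0)$ metric to conclude that any $w \in \widetilde Z$ shares the endpoints of $v$, i.e.\ $\emap(w) = \emap(v)$. Indeed, if $d(v(t), w(t)) \le \epsilon$ for all $t \in \R$ then both $\lim_{t \to \pm\infty}$ force $w^\pm = v^\pm$; so $w$ is parallel to $v$ and lies in the flat strip $CS(v)$, which has width $\width(v) \le \epsilon < \infty$ by the rank-one Standing Hypothesis. Thus $\widetilde Z$ is contained in a single cross section together with its time-reparametrizations within the $\epsilon$-window, and the flow acts on this set essentially by translation along a strip of bounded width. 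The upshot is that $\widetilde Z$ sits inside a set of the form (bounded-width flat strip directions) $\times$ (a bounded interval of footpoint shifts), which is a product of a compact finite-dimensional piece with an interval on which $\phi$ acts as a near-isometry.

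The third step is to bound the entropy of $\phi$ on this set. On a flat strip $\R \times [0,R]$ the geodesic flow is, up to the bounded transverse factor, a translation, and translations (isometries) have zero topological entropy; more precisely, $\phi$ restricted to $\widetilde Z$ is Lipschitz with the transverse directions contributing no exponential growth of separated sets, since separated points must already be separated by the bounded-diameter cross section. I would make this precise by covering $\widetilde Z$ with boundedly many $(n,\epsilon)$-balls independent of $n$: any two points of $\widetilde Z$ that stay $\epsilon$-close for all time cannot be further distinguished by iterating $\phi$, so $r_n(\phi, \widetilde Z, \epsilon')$ is bounded uniformly in $n$ for each $\epsilon' > 0$, giving $\htop(\phi, \widetilde Z) = 0$. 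Finite dimensionality of $\modG{SX}$ (\corref{SXG is fin-dim}) guarantees the cross sections are finite-dimensional compacta, which is what lets the covering numbers stay bounded.

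\textbf{Main obstacle.} The delicate point is ruling out exponential growth of $(n,\epsilon)$-separated sets inside $\widetilde Z$ despite the fact that $\widetilde Z$ is genuinely a positive-dimensional flat strip, not a single geodesic. The rank-one hypothesis bounds the transverse width but does not by itself make the flow an isometry in the strip direction; I expect the argument to hinge on showing that the transverse (parallel) directions contribute a bounded, $n$-independent covering number because they lie in a fixed compact cross section, while the flow direction contributes no separation since points are $\epsilon$-close for \emph{all} time. Carefully separating the ``along the flow'' and ``transverse to the flow'' contributions, and confirming that the $d_1$-versus-$d$ distinction (and the choice of the factor $\frac13$ to leave room for the injectivity radius) is respected throughout, is where the real care lies.
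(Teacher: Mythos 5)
Your proposal follows essentially the same route as the paper: lift the Bowen ball to $SX$ using $2\epsilon < \injrad(X,\G)$, identify the lift with the set $\setp{w \in SX}{d(v(t),w(t)) \le \epsilon \text{ for all } t \in \R}$, observe via convexity of the $\CAT(0)$ metric that this set consists of geodesics parallel to $v$ on which the flow generates no entropy, and push the conclusion back down to $\modG{SX}$. That skeleton is correct and is exactly the paper's argument.

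However, two of your supporting claims are wrong as stated, even though neither is load-bearing. First, the rank-one Standing Hypothesis does \emph{not} give $\width(v) \le \epsilon < \infty$: it asserts only that \emph{some} rank one axis exists, and an arbitrary $v \in SX$ may have infinite width (and even for rank one $v$, $\width(v)$ is the diameter of the full cross section $CS(v)$, which has nothing to do with $\epsilon$). What saves the argument is that you never need $CS(v)$ to be bounded: the lifted Bowen ball is, by its very definition, contained in the set of geodesics parallel to $v$ whose distance from $v$ is at most $\epsilon$, and that set is compact by properness of $X$ regardless of $\width(v)$. In fact this lemma uses the rank one hypothesis nowhere; only convexity and the injectivity radius enter. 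Second, finite dimensionality of $\modG{SX}$ (\corref{SXG is fin-dim}) is not ``what lets the covering numbers stay bounded''; total boundedness follows from compactness alone. The crisp reason the $(n,\epsilon')$-spanning numbers are independent of $n$ is this: for $w_1, w_2$ in the lifted Bowen ball, $t \mapsto d_X(w_1(t),w_2(t))$ is convex and bounded by $2\epsilon$, hence constant, so $d_{SX}(g^s w_1, g^s w_2)$ is independent of $s$ and the flow restricted to the Bowen ball is a genuine isometry for $d_{SX}$. Finite dimension is needed only later, in \corref{knieper98 3.4}, where Bowen's theorem is invoked.
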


\begin{proof}
Let $v \in SX$ and $\bar{v} = \piGS{v}$.
Since $2 \epsilon < \injrad(X,\G)$, the Bowen ball
\[Z_{\phi^k, \epsilon}(\bar{v})
= \setp{\bar{w} \in \modG{SX}}{d_k(\phi^{nk} \bar{v}, \phi^{nk} \bar{w}) \le \epsilon \text{ for all } n \in \Z}\]
lifts to the Bowen ball
\begin{align*}
Z_{\tilde{\phi}^k, \epsilon}(v)
& = \smallsetp{w \in SX}{d_k(\tilde{\phi}^{nk} v, \tilde{\phi}^{nk} w) \le \epsilon \text{ for all } n \in \Z} \\
&= \setp{w \in SX}{d(v(t), w(t)) \le \epsilon \text{ for all } t \in \R},
\end{align*}
which is a subset of the set $P(v)$ of geodesics parallel to $v$, and therefore has $\htop(\tilde{\phi},Z_{\tilde{\phi}^k, \epsilon}(v)) = 0$ by convexity of the $\CAT(0)$ metric.
Apply \lemref{entropy of covers}.
\end{proof}

It follows that the entropy map $\nu \mapsto h_{\nu}(\phi)$ is upper semicontinuous on $\mathcal{M}(\phi)$.
Moreover, we have the following from \thmref{bowen} and \corref{SXG is fin-dim}.

\begin{corollary} [cf.~ Corollary 3.4 of \cite{knieper98}]	\label{knieper98 3.4}
Let $\mathcal{A}$ be a Borel partition of $\modG{SX}$ such that $\diam_{d_k} \mathcal{A} \le \frac{1}{3} \injrad(X,\G)$, and let $\nu \in \mathcal{M}(\phi)$.
Then
\[h_{\nu}(\phi^k) = h_{\nu}(\phi^k,\mathcal{A}).\]
\end{corollary}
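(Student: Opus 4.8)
The plan is to obtain the corollary as a direct application of Bowen's theorem (\thmref{bowen}), taking $V = \modG{SX}$, $T = \phi^k$, and the given partition $\mathcal{A}$. First I would verify the three hypotheses of \thmref{bowen}. The space $\modG{SX}$ is compact, since the geometric action of $\G$ on $X$ induces a cocompact action on $SX$, and it is finite dimensional by \corref{SXG is fin-dim}. The measure $\nu$ lies in $\mathcal{M}(\phi) = \mathcal{M}(g_\G^1)$, so it is $\phi$-invariant and hence $\phi^k$-invariant, as required. Finally, $\phi^k$ is $h$-expansive by \lemref{knieper98 3.3}. Since the conclusion of \thmref{bowen} is exactly $h_{\nu}(\phi^k) = h_{\nu}(\phi^k,\mathcal{A})$, the statement follows once the diameter condition on $\mathcal{A}$ is matched against an $h$-expansivity constant.

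The one point that needs care is the type of inequality. \thmref{bowen} requires the strict bound $\diam_{d_k}\mathcal{A} < \epsilon$ for some $h$-expansivity constant $\epsilon$, whereas the hypothesis here gives only $\diam_{d_k}\mathcal{A} \le \frac{1}{3}\injrad(X,\G)$, with $\frac{1}{3}\injrad(X,\G)$ being precisely the constant named in \lemref{knieper98 3.3}. To create the needed slack I would reexamine that lemma's proof: the only property it uses is $2\epsilon < \injrad(X,\G)$, so the Bowen ball is contained in a parallel set whenever $\epsilon < \frac{1}{2}\injrad(X,\G)$, and hence $\phi^k$ is $h$-expansive with respect to \emph{every} such constant. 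Choosing any $\epsilon'$ with $\frac{1}{3}\injrad(X,\G) < \epsilon' < \frac{1}{2}\injrad(X,\G)$ then yields $\diam_{d_k}\mathcal{A} \le \frac{1}{3}\injrad(X,\G) < \epsilon'$, so \thmref{bowen} applies with this $\epsilon'$ and delivers the claimed equality.

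I expect no serious obstacle: the substance of the result is entirely contained in the finite dimensionality of \corref{SXG is fin-dim} and the $h$-expansivity of \lemref{knieper98 3.3}, with the corollary itself being a bookkeeping invocation of \thmref{bowen}. The only subtlety is the strict-versus-nonstrict inequality, and it is resolved by the observation that the $h$-expansivity constant produced in \lemref{knieref98 3.3} is not sharp and may be enlarged up to (but not including) $\frac{1}{2}\injrad(X,\G)$.
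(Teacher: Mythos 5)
Your proposal is correct and takes exactly the route the paper intends: the corollary is stated as an immediate consequence of \thmref{bowen} applied to $T = \phi^k$ on the compact, finite-dimensional space $\modG{SX}$ (via \corref{SXG is fin-dim}) together with the $h$-expansivity of \lemref{knieper98 3.3}. Your additional observation---that the proof of \lemref{knieper98 3.3} only uses $2\epsilon < \injrad(X,\G)$, so the expansivity constant can be enlarged to some $\epsilon' \in \left(\frac{1}{3}\injrad(X,\G), \frac{1}{2}\injrad(X,\G)\right)$ to reconcile the non-strict diameter bound in the corollary with the strict inequality required by \thmref{bowen}---is a valid repair of a detail the paper silently glosses over.
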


From now on, we will write $h = \delta_\G$.

\section{Uniqueness}

We turn to the proof that the Bowen-Margulis measure $m_\G$ on $\modG{SX}$ is the unique measure of maximal entropy.

STANDING HYPOTHESIS:
Fix a compact set $K \subset X$ such that $X = \G K := \bigcup_{\g \in \G} \g K$.
Also fix $p \in K$, and let $R \ge R_0$, the constant from \lemref{knieper98 2.3 5}.

For each $R' \ge 2R + \frac{1}{2}$ such that $K \subseteq \Ball(p,R')$, and each $x \in X$, define
\[D(x,R',R) = \setp{v \in SX}{v(0) \in \Ball(p,R') \text{ and } v(r) \in \Ball(x,R) \text{ for some } r > 0}.\]

\begin{lemma}					\label{existence of F}
For each $n$, there exist points $x_1, \dotsc x_{k(n)} \in X$ and a partition $\mc{F}^{n} = \{ F_{1}^{n}, \dotsc, F_{k(n)}^{n} \}$ of $\pifp^{-1}(\Ball(p,R'))$ such that
\[D(x_i,R',R) \subseteq F_{i}^{n} \subseteq D(x_i,R',2R)\]
and $d(p,x_i) = 2n + 2R + R'$ for all $i$.
\end{lemma}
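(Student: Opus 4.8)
The plan is to index the cells of $\mc{F}^{n}$ by finitely many landmarks $x_i$ lying on the sphere $S_\rho := \{x \in X : d(p,x) = \rho\}$ of radius $\rho := 2n + 2R + R'$, and to assign each geodesic to a landmark according to where it crosses $S_\rho$. First I would fix $n$ and observe that for $v \in \pifp^{-1}(\Ball(p,R'))$ the function $r \mapsto d(v(r),p)$ is convex (convexity of the $\CAT(0)$ metric), satisfies $d(v(0),p) \le R' < \rho$, and tends to $+\infty$; hence there is a largest $T(v) > 0$ with $d(v(T(v)),p) = \rho$, and setting $y_v := v(T(v)) \in S_\rho$ we get $v \in D(y_v,R',R)$. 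Thus the ``diagonal'' tubes $D(x,R',R)$ with $x \in S_\rho$ already cover $\pifp^{-1}(\Ball(p,R'))$, and the whole problem is to thin this cover into a genuine partition with the stated two-sided containment.

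The second and decisive step is a quantitative window estimate. Writing $c = \min_r d(v(r),p) \le R'$, attained at time $r_0$, the $\CAT(0)$ inequality applied to the triangle $p,\,v(r_0),\,v(r)$ (whose angle at $v(r_0)$ is at least $\tfrac{\pi}{2}$, since $v(r_0)$ is the nearest point of $v$ to $p$) gives $d(v(r),p)^2 \ge c^2 + (r-r_0)^2$. Together with convexity this confines the set of times $r > 0$ at which $v(r)$ can lie within $R$ of a point of $S_\rho$ to a single bounded interval, so that if one geodesic issuing from $\Ball(p,R')$ meets both $\Ball(x_i,R)$ and $\Ball(x_j,R)$ with $x_i,x_j \in S_\rho$, then $d(x_i,x_j)$ is bounded by an explicit constant. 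This is the mechanism that forces the tubes $D(x_i,R',R)$ to be pairwise \emph{disjoint} once the landmarks are taken sufficiently separated on $S_\rho$, which is exactly what a sandwiched partition requires: if $v \in D(x_i,R',R) \cap D(x_j,R',R)$ with $i \ne j$, then the containments $D(x_i,R',R) \subseteq F_i$ and $D(x_j,R',R) \subseteq F_j$ are incompatible with $F_i \cap F_j = \varnothing$.

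With the window estimate in hand I would run a Vitali-type selection: choose $x_1,\dots,x_{k(n)} \in S_\rho$ maximal subject to the tubes $D(x_i,R',R)$ being pairwise disjoint (finiteness follows from compactness of $S_\rho$, which is compact by properness of $X$). Then I define the partition by assigning each $v$ to the unique index $i$ with $v \in D(x_i,R',R)$ when one exists, and otherwise to the least $i$ with $v \in D(x_i,R',2R)$. By construction $D(x_i,R',R) \subseteq F_i \subseteq D(x_i,R',2R)$ and the $F_i$ are disjoint. That the $F_i$ exhaust $\pifp^{-1}(\Ball(p,R'))$ follows from maximality together with the crossing point: for any $v$ the tube $D(y_v,R',R)$ must meet some chosen $D(x_i,R',R)$, and the window estimate upgrades this to $v \in D(x_i,R',2R)$. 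Since each landmark already satisfies $d(p,x_i) = \rho = 2n + 2R + R'$, no final adjustment is needed.

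The hard part will be the second step, namely making the enlargement constant exactly $2R$. The difficulty is that when $n$ is small a geodesic issuing near $p$ crosses $S_\rho$ obliquely (the crossing angle is controlled only through $c \le R'$, which is itself comparable to $2R$), so the interval of times during which $v$ lingers within distance $R$ of $S_\rho$ is a priori comparable to $R + R'$ rather than to $R$. Reconciling disjointness of the $R$-tubes with the cover by the $2R$-tubes is therefore the delicate geometric heart of the argument, and it is precisely here that the comparison estimate $d(v(r),p)^2 \ge c^2 + (r-r_0)^2$, combined with the monotonicity forced by $d(v(0),p) \le R' < \rho - R$, must be pushed to pin down the constants.
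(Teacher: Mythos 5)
Your skeleton matches the paper's: landmarks on the sphere $S(p,\rho)$, $\rho = 2n+2R+R'$; the observation that every $v$ with $v(0)\in\Ball(p,R')$ crosses that sphere at a positive time, so covering reduces to a statement about crossing points; and a sandwich construction producing $\mc{F}^n$ once one has a family whose $R$-tubes are pairwise disjoint and whose $2R$-tubes cover. The difference is the selection rule, and that is exactly where your argument stops being a proof. The paper takes a maximal weakly $2R$-separated set of \emph{points}: covering by the $2R$-tubes is then immediate (the crossing point $y_v$ lies within $2R$ of some landmark by maximality), and the delicate half is disjointness of the $R$-tubes, which the paper asserts in one line from disjointness of the balls $\Ball(x_i,R)$. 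You instead take a family maximal subject to disjointness of the $R$-\emph{tubes}, so disjointness holds by fiat and the delicate half becomes covering --- and your covering step does not close. Maximality only gives you a geodesic $w$ issuing from $\Ball(p,R')$ passing within $R$ of both $y_v$ and some landmark $x_i$; your window estimate then bounds $d(y_v,x_i)$ by $2R$ plus the length of the time interval during which $w$ stays within distance $R$ of $S(p,\rho)$. That window length is never small: it is exactly $2R$ for a radial crossing, and for oblique crossings the one-sided CAT(0) comparison you invoke (which bounds $d(w(\cdot),p)$ only from below; from above you have only the Lipschitz bound) gives roughly $2R+c$ with $c\le R'$. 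So what you actually obtain is $v \in D(x_i,R',C)$ with $C \approx 4R+R'$, not $v\in D(x_i,R',2R)$. Your final paragraph concedes precisely this (``the hard part,'' ``must be pushed to pin down the constants''), so the proposal proves only a weakened lemma with a larger outer tube; the stated containment $F_i^n \subseteq D(x_i,R',2R)$ is left unproved. That is a genuine gap, not a presentational one.

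Two further remarks. First, the weaker statement you can prove (outer constant $C$ depending only on $R,R'$ but not on $n$) would in fact suffice for the downstream uses of this lemma --- \lemref{knieper98 5.3} needs only the inner containment and \lemref{knieper98 5.4} needs only \emph{some} fixed outer constant --- but it is not the lemma as stated. Second, your instinct about where the difficulty lies is sound, and it cuts both ways: the paper's own one-line deduction of tube-disjointness from ball-disjointness faces the same tension, since disjointness of $\Ball(x_i,R)$ and $\Ball(x_j,R)$ does not formally prevent a single geodesic from passing through both. Indeed, in the flat comparison picture a geodesic leaving $\Ball(p,R')$ at distance $c>0$ from $p$ can come within $R$ of two sphere points whose mutual distance slightly exceeds $2R$, so whichever half one makes free by the choice of maximality, the other half requires a quantitative argument that neither your write-up nor the paper's one-liner supplies. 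Your two-sided estimate $c^2+(r-r_0)^2 \le d(v(r),p)^2 \le (c+|r-r_0|)^2$ is the right starting point, but as it stands it pins the constant near $4R+R'$, not $2R$.
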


\begin{proof}
Set $r(n) = 2n + 2R + R'$, and let $P = \set{x_1, \dotsc, x_{k(n)}}$ be a maximal \defn{weakly $2R$-separated} set---that is, $d(x_i,x_j) \ge 2R$ for all $i \neq j$ and $P$ is maximal for this property---on the sphere $S(p,r(n))$ in $X$ of radius $r(n)$ about $p$.
Since $P$ is weakly $2R$-separated, the balls $\Ball(x_i,R)$ are pairwise disjoint; since $P$ is maximal, the balls $\Ball(x_i,2R)$ cover $S(p,r(n))$.
Thus the sets $D(x_i,R',R)$ are pairwise disjoint, and the sets $D(x_i,R',2R)$ cover $\pifp^{-1}(\Ball(p,R'))$.
A partition $\mc{F}^{n}$ with the desired property is now straightforward to construct.
\end{proof}

For each $n$, fix a partition $\mc{F}^{n}$ guaranteed by \lemref{existence of F}, and let $L_{i}^{n} = \piGS(F_{i}^{n})$.
Then $\mc{L}^{n} = \{ L_{i}^{n} \}$ covers $\modG{SX}$ because $K \subseteq \Ball(p,R')$ and $\pifp$ is onto.

\begin{lemma} [cf.~ Lemma 5.1 of \cite{knieper98}]	\label{knieper98 5.1}
Let $R' \ge 2R + \frac{1}{2}$ such that $K \subseteq \Ball(p,R')$.
There exists a constant $c' > 0$ such that for all $x \in X$ satisfying $d(p,x) \ge R' + R$,
\[m(D(x,R',R)) \ge c' e^{-h d(p,x)}.\]
\end{lemma}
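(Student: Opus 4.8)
The plan is to exploit the product description $m = \mu \times \lambda$ of the Bowen--Margulis measure under $\pi_p$, where $d\mu(\xi,\eta) = e^{-\delta_\G \beta_p(\xi,\eta)}\, d\mu_p(\xi)\, d\mu_p(\eta)$, and to exhibit a set $G$ of endpoint pairs $(\xi,\eta)$ of controlled $\mu$-measure, each of whose (a.e.\ zero-width) geodesics spends a definite amount of flow-time inside $D(x,R',R)$. Concretely, I would take
\[G = \setp{(\xi,\eta)}{\xi \in \pr_x(\Ball(p,R_0)) \text{ and } \eta \in \pr_\xi(\Ball(x,R_0))},\]
so that the lower bounds $\mu_p(\pr_x(\Ball(p,R_0))) \ge \ell$ from \lemref{knieper98 2.3 5} and $\mu_p(\pr_\xi(\Ball(x,R_0))) \ge \tfrac1b e^{-\delta_\G d(p,x)}$ from \lemref{knieper98 2.3 6} are exactly what is available. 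The final estimate will then take the shape $m(D(x,R',R)) \ge (\text{flow-length}) \cdot \mu(G) \ge c' e^{-h d(p,x)}$.

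The crux -- and the step I expect to be the main obstacle -- is the purely geometric claim that for $(\xi,\eta) \in G$ the geodesic $g_2$ from $\xi$ to $\eta$ genuinely lies in $D(x,R',R)$. By construction $g_2$ passes within $R_0 \le R$ of $x$; what must be checked is that it also passes within $R'$ of $p$, and that the $x$-approach occurs at positive time relative to a footpoint near $p$. Let $g_1$ be the geodesic from $\xi$ to $x$, which passes through some $q \in \Ball(p,R_0)$ by the defining condition $\xi \in \pr_x(q)$, so that the order along $g_1$ is $\xi, q, x$. Since $g_1$ and $g_2$ share the backward endpoint $\xi$, the function $t \mapsto d(g_1(t), g_2)$, with $g_1$ parametrized by the Busemann function of $\xi$, is convex and stays bounded as $t \to -\infty$ (the two geodesics being asymptotic to $\xi$), hence is non-decreasing on the backward ray $(-\infty, t_x]$; as its value at the level of $x$ is $d(x, g_2) \le R_0$, the same bound propagates back to the level of $q$, giving $d(p, g_2) \le d(p,q) + R_0 \le 2R_0 \le 2R < R'$. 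Because $q$ lies between $\xi$ and $x$ along $g_1$, the nearest point of $g_2$ to $p$ precedes its approach to $x$, so after reparametrizing $g_2$ to place its footpoint nearest $p$ we obtain $g_2(0) \in \Ball(p,R')$ and $g_2(r) \in \Ball(x,R)$ for some $r > 0$; here the hypothesis $d(p,x) \ge R' + R$ guarantees the two regions are disjoint and that this $r$ stays positive throughout the relevant interval.

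With the geometry in hand, the measure estimate is routine bookkeeping. For each fixed $(\xi,\eta) \in G$ the set of flow-times $s$ with $g^s g_2 \in D(x,R',R)$ is the set of $s$ with $g_2(s) \in \Ball(p,R')$; since the closest approach of $g_2$ to $p$ is at most $2R_0$ and $R' \ge 2R + \tfrac12$, this is an interval of length at least $2\sqrt{(R')^2 - (2R_0)^2} \ge 1$, giving a uniform lower bound $c_0 > 0$ for the $\lambda$-factor. On $G$ the footpoint of $g_2$ lies within $2R_0$ of $p$, so $\abs{\beta_p(\xi,\eta)} \le 4R_0$ and hence $e^{-\delta_\G \beta_p(\xi,\eta)} \ge e^{-4\delta_\G R_0}$. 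Finally I would bound $\mu(G)$ from below by integrating: for each admissible $\xi$ there is $q \in \Ball(p,R_0)$ with $\xi \in \pr_x(q)$, so \lemref{knieper98 2.3 6} applied at the basepoint $q$, together with the Radon--Nikodym formula $\frac{d\mu_p}{d\mu_q}(\eta) = e^{-\delta_\G b_\eta(p,q)} \ge e^{-\delta_\G R_0}$, yields $\mu_p(\pr_\xi(\Ball(x,R_0))) \ge \tfrac1b e^{-2\delta_\G R_0} e^{-\delta_\G d(p,x)}$, while \lemref{knieper98 2.3 5} controls the outer integral by $\ell$. Assembling the three constants gives $m(D(x,R',R)) \ge c_0\, e^{-4\delta_\G R_0} \cdot \tfrac{\ell}{b} e^{-2\delta_\G R_0}\, e^{-\delta_\G d(p,x)}$, which is the desired bound with $c' = c_0\, \ell\, b^{-1} e^{-6 \delta_\G R_0}$.
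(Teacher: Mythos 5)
Your overall plan is the paper's plan: the same set of endpoint pairs built from \lemref{knieper98 2.3 5} and \lemref{knieper98 2.3 6}, the same Radon--Nikodym pass through the auxiliary basepoint $q$, the same bound $\abs{\beta_p} \le 4R_0$, and the same ``(flow-time) $\times\ \mu(G)$'' assembly. The problem is the step you yourself call the crux. The ordering claim --- that along $g_2$, directed from $\xi$ to $\eta$, the near-$p$ passage precedes the near-$x$ passage --- is asserted (``because $q$ lies between $\xi$ and $x$ along $g_1$''), not proved, and the tool you set up cannot prove it: convexity of $t \mapsto d(g_1(t), \image(g_2))$ discards exactly the parametrization data that the ordering is about. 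If you try to recover it from Busemann levels of $b_\xi$, you get: the point of $g_2$ nearest $p$ has level in $[-2R_0, 2R_0]$, while the point of $g_2$ inside $\Ball(x,R_0)$ has level at least $d(p,x) - 3R_0$, so the parameter gap is only bounded below by $d(p,x) - 5R_0$. The lemma must cover $d(p,x) \ge R'+R$, which can be as small as $3R + \tfrac12$; with $R = R_0$ this is $3R_0 + \tfrac12 < 5R_0$ whenever $R_0 > \tfrac14$, and $R_0$ (coming from \lemref{knieper98 2.3 5}) is on the order of $\diam K$ plus shadow radii, so it is not small. The claim is in fact true, but the paper proves it by running convexity on \emph{synchronized} parametrizations: with $w(0)=x$, $w^- = \xi$, $w(t_0) = q$, $t_0 = -d(q,x) < 0$, and $v = g_2$ parametrized so that $v(0) \in \Ball(x,R_0)$, the function $t \mapsto d(w(t),v(t))$ is convex and bounded as $t \to -\infty$ (same backward endpoint), hence non-decreasing, so $d(v(t_0),q) \le d(v(0),w(0)) \le R_0$; thus $v(t_0) \in \Ball(p,2R_0)$ at parameter $t_0 < 0$, i.e.\ \emph{before} $v(0)$, and the ordering comes for free. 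Convexity of $t \mapsto d(v(t),p)$ together with $R' > 2R \ge 2R_0$ then forces the minimum of $d(v(\cdot),p)$ to occur at negative parameter as well, so one may recentre at the closest point. (A minimal repair inside your framework: anchor at the point of $g_2$ nearest to $q$ rather than nearest to $p$; its level is at most $b_\xi(q,p)+R_0$, so the gap to the near-$x$ point is at least $d(q,x) - 2R_0 \ge d(p,x) - 3R_0 \ge \tfrac12 > 0$.)

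A second, smaller error: the chord-length bound $2\sqrt{(R')^2 - (2R_0)^2}$ is the Euclidean value, and in a $\CAT(0)$ space it is an \emph{upper} bound for the length of $\setp{s}{g_2(s) \in \Ball(p,R')}$, not a lower bound: since the angle at the nearest-point projection is at least $\pi/2$, comparison gives $d(p,g_2(s_0+\tau))^2 \ge d(p,g_2(s_0))^2 + \tau^2$. The correct lower bound is the triangle-inequality one, $2(R' - 2R_0) \ge 2(2R + \tfrac12 - 2R_0) \ge 1$, which still yields a uniform flow-time constant, so this is easily fixed. The remaining bookkeeping (the bound $\mu_p(\pr_\xi(\Ball(x,R_0))) \ge \tfrac1b e^{-2\delta_\G R_0} e^{-\delta_\G d(p,x)}$ via $q$, and $\ell$ for the outer integral) is correct and matches the paper.
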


\begin{proof}

Let $\ell > 0$ be the constant from \lemref{knieper98 2.3 5}, and let $x \in X$ satisfy $d(p,x) \ge R' + R$.
Let $\eta \in \pr_{x}(\Ball(p,R))$, and choose $w \in \pifp^{-1}(x)$ such that $w^- = \eta$.
By choice of $\eta$, we have $q := w(t_0) \in \Ball(p,R)$ for some $t_0 < 0$.
We claim $(\eta,\zeta) \in \emap(D(x,R',R))$ for every $\zeta \in \pr_{\eta}(\Ball(x,R))$.
So let $\zeta \in \pr_{\eta}(\Ball(x,R))$.
By choice of $\zeta$, there exists $v \in \emap^{-1}(\eta,\zeta) \cap \pifp^{-1}(\Ball(x,R))$.
Since $w(t_0) \in \Ball(p,R)$, we obtain $v(t_0) \in \Ball(p,2R)$ by convexity of the metric.
Thus $g^{t_{\phantom{}_0}} v \in D(x,R',R)$, which proves our claim.
Hence
\begin{align*}
\mu_{p}(\pr_{\eta}(\Ball(x,R)))
&\ge e^{-hR} \mu_{q}(\pr_{\eta}(\Ball(x,R))) \\
&\ge e^{-hR} \cdot \frac{1}{b} e^{-h d(q,x)}
\ge \left[ \frac{e^{-2hR}}{b} \right] e^{-h d(p,x)}
\end{align*}
by \lemref{knieper98 2.3 6}.
Now for each $\zeta \in \pr_{\eta}(\Ball(x,R))$, if $v \in D(x,R',R) \cap \emap^{-1}(\eta,\zeta)$ is chosen such that $v(0)$ is the closest point on $v$ to $p$, then we have at least $v([-\frac{1}{2},\frac{1}{2}]) \subset D(x,R',R)$.
Therefore,
\begin{align*}
\int_{D(x,R',R)} \, dm
&\ge 1 \cdot \int_{\pr_{x}(\Ball(p,R))} \int_{\pr_{\eta}(\Ball(x,R))} e^{-h \beta_{p}(\eta,\zeta)} \, d\mu_{p}(\zeta) d\mu_{p}(\eta) \\
&\ge e^{-2hR} \int_{\pr_{x}(\Ball(p,R))} \mu_{p}(\pr_{\eta}(\Ball(x,R))) \, d\mu_{p}(\eta) \\
&\ge e^{-2hR} \int_{\pr_{x}(\Ball(p,R))} \left[ \frac{e^{-2hR}}{b} \right] e^{-h d(p,x)} \, d\mu_{p}(\eta) \\
&\ge \left[ \frac{\ell}{b} e^{-4hR} \right] e^{-h d(p,x)}.
\qedhere
\end{align*}
\end{proof}

\begin{lemma} [cf.~ Lemma 5.3 of \cite{knieper98}]	\label{knieper98 5.3}
There exist constants $r,c > 0$ such that for all $n \in \N$, no $\bar{v} \in \modG{SX}$ lies in more than $r$ sets $L_{i}^{n}$, and $m_\G(L_{i}^{n}) \ge c e^{-2hn}$ for all $i$.
\end{lemma}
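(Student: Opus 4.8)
The claim has two parts. First, a bounded-multiplicity statement: there is a uniform $r$ such that no point $\bar v \in \modG{SX}$ lies in more than $r$ of the sets $L_i^n$, for all $n$. Second, a lower bound $m_\G(L_i^n) \ge c e^{-2hn}$ for a uniform $c > 0$. I would treat the two parts separately, as they rely on different earlier results.

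**Plan for the lower bound.** This is the easier part, and I would dispatch it first using \lemref{knieper98 5.1}. By construction (\lemref{existence of F}), $L_i^n = \piGS(F_i^n) \supseteq \piGS(D(x_i,R',R))$, and $d(p,x_i) = 2n + 2R + R'$ for all $i$. Since $d(p,x_i) \ge R' + R$ holds for all $n \ge 0$, \lemref{knieper98 5.1} applies and gives
\[
m(D(x_i,R',R)) \ge c' e^{-h d(p,x_i)} = c' e^{-h(2n + 2R + R')}.
\]
To pass from $m$ on $SX$ to $m_\G$ on $\modG{SX}$, I would note that $\piGS$ restricted to $D(x_i,R',R)$ is injective provided the footpoint set $\Ball(p,R')$ has diameter below $\injrad(X,\G)$ — if it does not, I would instead bound $m_\G(L_i^n)$ below by covering $D(x_i,R',R)$ by boundedly many injectivity pieces, or simply observe $m_\G(L_i^n) \ge m_\G(\piGS(D(x_i,R',R)))$ and use that $m_\G$ is the quotient measure. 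Absorbing the constants $e^{-h(2R+R')}$ into a single $c > 0$ then yields $m_\G(L_i^n) \ge c e^{-2hn}$.

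**Plan for the bounded-multiplicity bound.** This is where I expect the real work. A point $\bar v = \piGS(v)$ lies in $L_i^n$ iff some $\G$-translate $\g v$ lies in $F_i^n \subseteq D(x_i, R', 2R)$, meaning $\g v(0) \in \Ball(p,R')$ and $\g v$ passes within $2R$ of $x_i$. The issue is that a single $v$ could in principle meet many of the $D(x_i,R',2R)$ via different group elements $\g$, so I must control both the number of relevant $\g$ and the number of $x_i$ reachable by a fixed $\g v$. For a fixed lift $u = \g v$ with $u(0) \in \Ball(p,R')$, the centers $x_i$ with $u \in D(x_i,R',2R)$ all satisfy $d(u(r), x_i) < 2R$ for some $r > 0$ with $d(p,x_i) = r(n)$; since the $x_i$ are $2R$-separated on the sphere $S(p,r(n))$ and $u$ is a single geodesic ray, convexity forces all such $x_i$ to lie near the single point where $u$ crosses that sphere, bounding their number by a constant depending only on $R$ (a packing estimate for $2R$-separated points inside a ball of radius $O(R)$ in the sphere). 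The remaining task is to bound the number of distinct $\g$ for which $\g v(0) \in \Ball(p,R')$: these are exactly the $\g$ with $\g \cdot \pifp(v) \in \Ball(p,R')$, and since $\G$ acts properly discontinuously and $\Ball(p,R')$ is compact, the set $\smallsetp{\g \in \G}{\g \Ball(\pifp(v), R') \cap \Ball(p,R') \ne \emptyset}$ has cardinality bounded uniformly in $v$ (independent of $n$). Multiplying the two bounds gives the uniform $r$.

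**Main obstacle.** The delicate point is the packing estimate showing a fixed geodesic meets only boundedly many of the $D(x_i,R',2R)$: I must argue that the relevant radii $r$ (at which $u$ is within $2R$ of some $x_i$) cluster, so that all such $x_i$ sit in a bounded-radius region of $S(p,r(n))$, and then invoke that a $2R$-separated set in a ball of fixed radius has bounded cardinality — this uses properness but I must make sure the bound is genuinely uniform in $n$, exploiting that the $x_i$ all lie on one sphere while $u$ is a single unit-speed geodesic. Combined with the proper-discontinuity bound on group elements, both factors are $n$-independent, which is precisely what the lemma demands.
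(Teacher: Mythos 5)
Your outline can be made to work, but you have overlooked the one structural fact that makes the paper's proof nearly immediate: by \lemref{existence of F} the sets $F_i^n$ form a \emph{partition} of $\pifp^{-1}(\Ball(p,R'))$, so they are pairwise disjoint. Membership $\bar v \in L_i^n$ requires a lift of $\bar v$ lying in $F_i^n$ itself, not merely in $D(x_i,R',2R)$, and by disjointness each lift accounts for at most one index $i$. Hence the multiplicity is bounded simply by the number of lifts of $\bar v$ with footpoint in $\Ball(p,R')$, and your entire packing argument (your declared ``main obstacle'') is unnecessary. The paper takes $r = \card{\smallsetp{\g \in \G}{\g K \cap \Ball(p,R') \neq \varnothing}}$, finite by proper discontinuity: writing $v(0) \in \g_0 K$ (cocompactness), any $\g$ with $\g v(0) \in \Ball(p,R')$ has $\g \g_0$ in that finite set, so $\bar v$ has at most $r$ such lifts and therefore lies in at most $r$ of the $L_i^n$. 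The same constant settles your worry about passing from $m$ to $m_\G$ in the lower bound: since each $\piGS$-fiber meets $\pifp^{-1}(\Ball(p,R'))$, and hence each $F_i^n$, in at most $r$ points, one gets $r \cdot m_\G(L_i^n) \ge m(F_i^n) \ge m(D(x_i,R',R)) \ge c' e^{-h(2R+R')} e^{-2hn}$ by \lemref{knieper98 5.1}; this is your ``covering by boundedly many injectivity pieces'' option made precise, and both halves of the lemma use one and the same $r$.

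If you insisted on keeping your route, two steps need repair. First, the uniform packing estimate does not follow from properness, which is all you cite: the $2R$-separated centers $x_i$ met by a fixed lift do cluster in a ball of radius comparable to $R+R'$, but that ball sits at distance $2n+2R+R'$ from $p$, and in a general proper metric space the packing numbers of fixed-radius balls need not be bounded uniformly over their centers (a locally finite tree with growing valence is proper and violates this). You must use cocompactness to translate the configuration back into a fixed neighborhood of $K$ before invoking compactness; with that, the bound is genuinely independent of $n$. Second, your fallback ``simply observe $m_\G(L_i^n) \ge m_\G(\piGS(D(x_i,R',R)))$ and use that $m_\G$ is the quotient measure'' cannot be completed: the quotient measure satisfies $m_\G(\piGS A) = m(A)$ only when $\piGS$ is injective on $A$, and in general $m_\G(\piGS D)$ can be as small as $m(D)$ divided by the multiplicity (for instance when $D$ is a union of several disjoint $\G$-translates of one injective set), so the multiplicity constant cannot be bypassed---only bounded, which is exactly what $r$ does.
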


\begin{proof}
The number $r$ can be taken to be the number of $\g \in \G$ such that $\g K \cap \Ball(p,R')$ is nonempty.
Hence by \lemref{knieper98 5.1},
\[r \cdot m_\G(L_{i}^{n})
\ge m(F_{i}^{n})
\ge m(D(x_i,R',R))
\ge c' e^{-h d(p,x_i)}
= c' e^{-h (2R + R')} e^{-2hn}\]
for all $n \in \N$ and $i \in \set{1, 2, \dotsc, k(n)}$.
Thus we may take $c = \frac{c'}{r} e^{-h (2R + R')}$.
\end{proof}

We say a set $A$ is \defn{$(d_n, \epsilon)$-separated} if it is $(n,\epsilon)$-separated (under $\phi$) with respect to the metric $d_n$.

\begin{lemma} [cf.~ Lemma 5.2 of \cite{knieper98}]	\label{knieper98 5.2}
Fix $n \in \N$, and let $q \in X$ satisfy $d(q,p) \ge n + R + R'$.
Then the cardinality of a $(d_n,\epsilon)$-separated set of $D(q,R',R)$ is bounded from above by a constant $a = a(\epsilon,R',R)$ depending only on $\epsilon$, $R'$, and $R$.
\end{lemma}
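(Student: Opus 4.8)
The plan is to bound the separated set by sending each geodesic to a pair of points lying in a set of bounded diameter, and then invoking a packing estimate that is uniform by cocompactness. Fix a set $A \subseteq D(q,R',R)$ that is $(d_n,\epsilon)$-separated. For each $v \in A$ choose $r_v > 0$ with $v(0) \in \Ball(p,R')$ and $v(r_v) \in \Ball(q,R)$; since $v$ has unit speed, $r_v = d(v(0),v(r_v))$, so the hypothesis $d(q,p) \ge n + R + R'$ forces $r_v \ge d(p,q)-R-R' \ge n$, and any two such times satisfy $\abs{r_v - r_w} \le 2(R+R')$. The guiding idea is that all geodesics in $D(q,R',R)$ are funnelled from $\Ball(p,R')$ toward $\Ball(q,R)$, so by convexity of the $\CAT(0)$ metric they lie on $[0,n]$ in a common tube of bounded width; the dynamical separation over $[0,n]$ can therefore only detect the bounded spread of this tube, whose information is captured by two bounded pieces of boundary data.

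I would first reduce the separation from the metric $d_{SX}$ to the metric $d$ on $X$ over a bounded window. Apply \corref{uniform compact-open constants} to obtain $r = r(\epsilon) > 0$ such that $d(v'(t),w'(t)) < \epsilon$ for all $t \in [-r,r]$ implies $d_{SX}(v',w') < \epsilon$; applying this to $g^s v$ and $g^s w$ and contraposing shows that whenever $d_{SX}(g^s v, g^s w) > \epsilon$ there is a base time $\tau \in [s-r, s+r]$ with $d(v(\tau), w(\tau)) \ge \epsilon$. Since $A$ is separated in the Bowen metric $d_n(v,w) = \sup_{0 \le t \le n} d(g^t v, g^t w)$, distinct $v,w \in A$ satisfy $d(g^s v, g^s w) > \epsilon$ for some $s \in [0,n]$, so there is $\tau \in [-r, n+r]$ with $d(v(\tau), w(\tau)) \ge \epsilon$. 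Now $\tau \mapsto d(v(\tau), w(\tau))$ is convex on all of $\R$, so its maximum over $[-r, n+r]$ is attained at an endpoint; hence $d(v(-r), w(-r)) \ge \epsilon$ or $d(v(n+r), w(n+r)) \ge \epsilon$. Thus the map $\Phi(v) = (v(-r), v(n+r))$ sends $A$ to a subset of $X \times X$ that is $\epsilon$-separated in the maximum metric $\big((x_1,x_2),(y_1,y_2)\big) \mapsto \max\set{d(x_1,y_1), d(x_2,y_2)}$.

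It remains to see that $\Phi(A)$ has diameter bounded in terms of $\epsilon, R', R$ alone. The first coordinates satisfy $v(-r) \in \Ball(p, R'+r)$. For the second coordinates, convexity together with the common passage through $\Ball(q,R)$ confines the geodesics on $[0,\min\set{r_v,r_w}] \supseteq [0,n]$ to a tube of pairwise width at most $4R + 2R'$, and overshooting from $n$ to $n+r$ increases pairwise distance by at most $2r$; hence $d(v(n+r), w(n+r)) \le 4R + 2R' + 2r$ for all $v,w \in A$. Consequently $\Phi(A)$ lies in a product of two sets of bounded diameter, and since $\G$ acts cocompactly by isometries, the $\epsilon$-packing number of a ball of fixed radius is bounded independently of its center; therefore $\card{A} = \card{\Phi(A)} \le a(\epsilon,R',R)$. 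I expect the main obstacle to be this last diameter estimate for the forward data point: one must keep $v(n+r)$ within a bounded neighbourhood of $\Ball(q,R)$, which requires tracking that the parameter $n+r$ overshoots the tube's end $\min\set{r_v,r_w} \ge n$ by at most the fixed amount $r$ and controlling the resulting post-$q$ divergence linearly. This is precisely where the hypothesis $d(q,p) \ge n + R + R'$, ensuring $r_v \ge n$, is essential.
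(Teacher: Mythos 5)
Your proof is correct, and it runs dual to the paper's argument rather than parallel to it. The paper constructs an explicit $(d_n,\tfrac{\epsilon}{2})$-spanning set for $D(q,R',R)$: it takes $\tfrac{\epsilon}{2}$-nets $Q_1 \subset \Ball(p,R'+R'')$ and $Q_2 \subset \Ball(q,R+R'')$ (with $R''$ the constant of \corref{uniform compact-open constants} for $\tfrac{\epsilon}{2}$), joins each pair $(x,y) \in Q_1 \times Q_2$ by a geodesic $v_{x,y}$ (extending the segment, via geodesic completeness), and uses convexity to show every $w \in D(q,R',R)$ is $\tfrac{\epsilon}{2}$-shadowed by some $v_{x,y}$ on $[-R'', n+R'']$; the bound on separated sets then follows from the spanning/separated duality, with $\card{Q_1}\cdot\card{Q_2}$ uniform by cocompactness. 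You instead inject the separated set directly into a bounded region of $X \times X$ via $v \mapsto (v(-r), v(n+r))$: the observation that a convex function on an interval attains its maximum at an endpoint pushes the separation (located anywhere in $[-r, n+r]$ by \corref{uniform compact-open constants}) out to the two fixed evaluation times, and your tube estimate $d(v(t),w(t)) \le 4R+2R'$ on $[0,\min\set{r_v,r_w}]$ — which is exactly where the hypothesis $d(q,p) \ge n+R+R'$ enters, just as it enters implicitly in the paper's shadowing step to guarantee $t_0 \ge n$ — confines the second coordinates to a set of diameter $4R+2R'+2r$. Both proofs finish the same way, with cocompactness giving packing bounds for balls of fixed radius that are independent of the center. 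What your route buys: no geodesics need to be constructed (so geodesic completeness is not used in this step), no duality step, and no matching of parametrizations between $w$ and $v_{x,y}$ (a detail the paper elides); what the paper's route buys is the formally stronger conclusion that $D(q,R',R)$ has uniformly bounded $d_n$-covering numbers. Note also that you read ``$(d_n,\epsilon)$-separated'' as $\epsilon$-separated in the metric $d_n$, which is the reading the paper's own proof supports. Two cosmetic repairs: your image $\Phi(A)$ is only weakly separated (pairwise max-distances $\ge \epsilon$, not $> \epsilon$), so run the final count with covers at scale $\epsilon/3$; and that count should be phrased as a product of two covering numbers for the max metric on the product, each uniform by cocompactness.
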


\begin{proof}
Fix $\epsilon > 0$.
Let $R''$ be the $r$ guaranteed by \corref{uniform compact-open constants} for $\frac{\epsilon}{2}$.
Choose maximal $\frac{\epsilon}{2}$-separated subsets $Q_1 \subset \Ball(p,R'+R'')$ and $Q_2 \subset \Ball(q,R+R'')$.
For each $x \in Q_1$ and $y \in Q_2$, choose $v_{x,y} \in SX$ such that $v_{x,y}(-R'') = x$ and $v_{x,y}(d(x,y) - R'') = y$.
We claim the set $A_{n} = \setp{v_{x,y}}{x \in Q_1 \text{ and } y \in Q_2}$ is an $(n,\frac{\epsilon}{2})$-separated set for $D(q,R',R)$.
So let $w \in D(q,R',R)$ be arbitrary.
Then $w(0) \in \Ball(p,R'')$ and $w(t_0) \in \Ball(q,R)$, hence $w(-R'')$ and  $w(t_0 + R'')$ lie within distance $\frac{\epsilon}{2}$ of some $x \in Q_1$ and $y \in Q_2$, respectively.
By convexity, $d_X(w(t), v_{x,y}(t)) \le \frac{\epsilon}{2}$ for all $t \in [-R'',n+R'']$.
Thus $d_{SX}(g^t w, g^t v_{x,y}) \le \frac{\epsilon}{2}$ for all $t \in [0,n]$, proving the claim.
Therefore, $\bigcup_{v_{x,y} \in A_{n}} \cl{\Ball}_{d_n}(v_{x,y},\frac{\epsilon}{2}) \supseteq D(q,R',R)$.
Now any $\frac{\epsilon}{2}$-separated set in $\Ball(p,R'+R'')$ is, up to an isometry $\g \in \G$, a $\frac{\epsilon}{2}$-separated set in $\Ball(K,R'+R'')$; similarly for $\Ball(q,R+R'')$.
Hence $\card{A_{n}} \le \card{Q_1} \cdot \card{Q_2} \le s_1(\phi,\Ball(K,R'+R''),\frac{\epsilon}{2}) \cdot s_1(\phi,\Ball(K,R+R''),\frac{\epsilon}{2})$, which depends only on $\epsilon$, $R$, and $R'$.
Since any $\epsilon$-separated set in $D(q,R',R)$ can have at most $\card{A_{n}}$ elements, we have proved the lemma.
\end{proof}

\begin{lemma} [cf.~ Lemma 5.4 of \cite{knieper98}]	\label{knieper98 5.4}
Let $P = \set{\bar{v}_1, \dotsc, \bar{v}_{\ell}}$ be a maximal $(d_{2n},\epsilon)$-separated set in $\modG{SX}$, and let $\mc{B}$ be a partition of $\modG{SX}$ such that for each $B \in \mc{B}$ there is some $\bar{v}_j \in P$ such that
\[\Ball_{d_{2n}}(\bar{v}_j,\frac{\epsilon}{2}) \subseteq B \subseteq \Ball_{d_{2n}}(\bar{v}_j,\epsilon).\]
Then for each $L_{i}^{n} \in \mc{L}^{n}$,
\[\card{\setp{B \in \mc{B}}{B \cap L_{i}^{n} \neq \varnothing}}
\le a(\epsilon,R'+\epsilon,2R+\epsilon),\]
where $a(\epsilon,R'+\epsilon,2R+\epsilon)$ is the constant from \lemref{knieper98 5.2}.
\end{lemma}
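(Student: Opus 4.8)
The plan is to reduce the quantity $\card{\setp{B \in \mc{B}}{B \cap L_{i}^{n} \neq \varnothing}}$ to the cardinality of a $(d_{2n},\epsilon)$-separated set inside a single enlarged $D$-set, and then to quote \lemref{knieper98 5.2} directly. The first step is to pass from blocks to their centers. For each $B \in \mc{B}$ let $\bar{v}_{j(B)} \in P$ be a center witnessing $\Ball_{d_{2n}}(\bar{v}_{j(B)},\tfrac{\epsilon}{2}) \subseteq B \subseteq \Ball_{d_{2n}}(\bar{v}_{j(B)},\epsilon)$. Because the blocks of a partition are pairwise disjoint, the inner balls $\Ball_{d_{2n}}(\bar{v}_{j(B)},\tfrac{\epsilon}{2})$ are pairwise disjoint, so $B \mapsto \bar{v}_{j(B)}$ is injective; hence it is enough to bound the number of centers $\bar{v}_{j}$ whose block meets $L_{i}^{n}$. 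Since $P$ is $(d_{2n},\epsilon)$-separated, these centers form a $(d_{2n},\epsilon)$-separated subfamily of $P$.

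Next I would place these centers near $L_{i}^{n}$ and lift. If $B \cap L_{i}^{n} \neq \varnothing$, pick $\bar{u}_{B} \in B \cap L_{i}^{n}$; then $d_{2n}(\bar{v}_{j(B)},\bar{u}_{B}) \le \epsilon$, and $\bar{u}_{B} \in L_{i}^{n} = \piGS(F_{i}^{n})$ with $F_{i}^{n} \subseteq D(x_{i},R',2R)$ by \lemref{existence of F}. Lift $\bar{u}_{B}$ to $u_{B} \in F_{i}^{n}$, and then use $\epsilon < \injrad(X,\G)$ to lift $\bar{v}_{j(B)}$ to the geodesic $v_{B} \in SX$ that is $d_{2n}$-closest to $u_{B}$; the injectivity-radius bound guarantees $d_{2n}(v_{B},u_{B}) \le \epsilon$, that is $d(v_{B}(t),u_{B}(t)) \le \epsilon$ for all $t \in [0,2n]$, and that distinct centers yield distinct, still $(d_{2n},\epsilon)$-separated, lifts $v_{B}$.

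It remains to see that each $v_{B}$ lies in the enlarged set $D(x_{i},R'+\epsilon,2R+\epsilon)$ and to invoke \lemref{knieper98 5.2}. From $u_{B}(0) \in \Ball(p,R')$ and $d(v_{B}(0),u_{B}(0)) \le \epsilon$ we get $v_{B}(0) \in \Ball(p,R'+\epsilon)$. For the far end, note $d(p,x_{i}) = 2n + 2R + R'$ forces the first passage of $u_{B}$ through $\Ball(x_{i},2R)$ to occur at a parameter no smaller than $2n$; convexity of $t \mapsto d(v_{B}(t),u_{B}(t))$ on the $\CAT(0)$ space, together with the $\epsilon$-tracking on $[0,2n]$, then confines $v_{B}$ near $x_{i}$ within the enlarged radius $2R+\epsilon$, so $v_{B} \in D(x_{i},R'+\epsilon,2R+\epsilon)$. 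The family $\setp{v_{B}}{B \cap L_{i}^{n} \neq \varnothing}$ is thus a $(d_{2n},\epsilon)$-separated subset of $D(x_{i},R'+\epsilon,2R+\epsilon)$, and since the constant produced by \lemref{knieper98 5.2} depends only on $\epsilon$ and the two radii and not on the time parameter, applying that lemma with parameters $(\epsilon,R'+\epsilon,2R+\epsilon)$ bounds its cardinality by $a(\epsilon,R'+\epsilon,2R+\epsilon)$. Composing with the injection $B \mapsto \bar{v}_{j(B)}$ yields the claim.

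The hard part will be the bookkeeping in the two lifting steps, precisely because $d_{2n}$ is a long-time metric. I must check that $\epsilon$-closeness over $[0,2n]$ downstairs genuinely lifts to $\epsilon$-closeness upstairs, which is exactly where $\epsilon < \injrad(X,\G)$ is used, and that the enlargement of the radii from $(R',2R)$ to $(R'+\epsilon,2R+\epsilon)$ absorbs both the footpoint perturbation and the perturbation of the passage near $x_{i}$. The most delicate point is that the passage near $x_{i}$ occurs at a parameter at (or just past) the end of the window $[0,2n]$, so that the confinement to radius $2R+\epsilon$ there rests on convexity of the metric rather than on direct tracking; reconciling this with the exact equality $d(p,x_{i}) = 2n+2R+R'$ built into \lemref{existence of F} is the crux of matching the hypothesis of \lemref{knieper98 5.2}.
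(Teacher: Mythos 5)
Your route is the paper's route: trade blocks for their separated centers, lift each center to a geodesic $v_B$ that $\epsilon$-tracks (in $d_{2n}$) a lift $u_B \in F_i^n$, note the lifts stay $(d_{2n},\epsilon)$-separated, place them in an enlarged $D$-set, and quote \lemref{knieper98 5.2}; the injectivity and lifting bookkeeping in your first two paragraphs is sound. The genuine gap is the step you yourself call the crux, and the tool you propose for it cannot close it. Convexity of $t \mapsto d(v_B(t),u_B(t))$ controls that function only inside the window $[0,2n]$ where it is already known to be at most $\epsilon$; past $t = 2n$, convexity makes slopes nondecreasing, so it yields \emph{lower} bounds on the divergence, never upper bounds. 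This matters because the first passage time $r$ of $u_B$ through $\Ball(x_i,2R)$ is strictly later than $2n$: from $u_B(0) \in \Ball(p,R')$ and $d(p,x_i) = 2n+2R+R'$ one gets $2n < r < 2n+4R+2R'$, and on $[2n,r]$ you have no tracking at all. In fact the containment $v_B \in D(x_i,R'+\epsilon,2R+\epsilon)$ that you assert (and which the paper's own proof asserts in one unproved line) is false in general: in a tree, $v_B$ can coincide with $u_B$ until time about $2n+\log(2/\epsilon)$ and then branch off---this is compatible with $d_{2n}(v_B,u_B)<\epsilon$ because of the weight $e^{-\abs{s}}$ inside $d_{SX}$---after which the closest $v_B$ ever comes to $x_i$ is about $2R+R'-\log(2/\epsilon)$, which exceeds $2R+\epsilon$ once $R' > \log(2/\epsilon)+\epsilon$.

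The repair is to give up on the radius $2R+\epsilon$ and use any radius bounded in terms of $\epsilon,R,R'$ alone; the tool is not convexity but the trivial fact that two unit-speed geodesics diverge at rate at most $2$. From $d(v_B(2n),u_B(2n)) \le \epsilon$ and $r-2n < 4R+2R'$ you get $d(v_B(r),u_B(r)) \le \epsilon + 8R+4R'$, hence $v_B \in D(x_i,\,R'+\epsilon,\,10R+4R'+\epsilon)$. The hypothesis of \lemref{knieper98 5.2} for this larger set now fails by the additive constant $8R+4R'+2\epsilon$ (exactly the mismatch with $d(p,x_i)=2n+2R+R'$ that you flagged), but its proof survives: every element of the larger $D$-set still has passage time at least $2n-(8R+4R'+2\epsilon)$, so enlarging the buffer $R''$ in that proof by this constant restores tracking on all of $[0,2n]$, and the resulting bound depends only on $\epsilon$, $R$, $R'$. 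That is all that is used when the present lemma is invoked in \thmref{uniqueness of maximal entropy}---only independence of $n$ and $i$ matters---so nothing downstream is harmed; but as written, your convexity step is a real gap, not a detail to be absorbed.
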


\begin{proof}
Lift $P$ to a set $\tilde{P} = \set{v_1, \dotsc, v_{\ell}} \subset SX$ such that each $\piGS(v_j) = \bar{v}_j$, and $d_{2n}(v_j,F_{i}^{n}) < \epsilon$ whenever $d_{2n}(\bar{v}_j,L_{i}^{n}) < \epsilon$.
Then $\tilde{P}$ is $(d_{2n},\epsilon)$-separated, and
\begin{align*}
\card{\setp{B \in \mc{B}}{B \cap L_{i}^{n} \neq \varnothing}}
&\le \card{\setp{\bar{v}_j \in P}{B_{d_{2n}}(\bar{v}_j,\epsilon) \cap L_{i}^{n} \neq \varnothing}} \\
&= \card{\setp{v_j \in \tilde{P}}{d_{2n}(v_j,F_{i}^{n}) < \epsilon}}
\end{align*}
by hypothesis on $\mc{B}$ and choice of $\tilde{P}$.
But every geodesic $v \in SX$ that satisfies $d_{2n}(v,F_{i}^{n}) < \epsilon$ lies in $D(x_i,R'+\epsilon,2R+\epsilon)$, so by \lemref{knieper98 5.2} 
\[\card{\setp{v_j \in \tilde{P}}{d_{2n}(v_j,F_{i}^{n}) < \epsilon}}
\le a(\epsilon,R'+\epsilon,2R+\epsilon).\qedhere\]
\end{proof}

Let $\mc{A}^n = \set{\phi^n(L_i^{n})}$ for each $n \in \N$.

\begin{lemma} [cf.~ Lemma 5.6 of \cite{knieper98}]	\label{knieper98 5.6}
Let $\nu$ be a Borel probability measure on $\modG{SX}$
and $\Omega \subseteq \modG{SX}$ be a Borel set such that $\piGSi(\Omega)$ contains all geodesics of nonzero width.
Then there exists a union $C_n$ of sets $A \in \mc{A}^n = \set{\phi^n(L_i^{n})}$ such that
\[\nu(\Omega \Delta C_n) := \nu(\Omega \setminus C_n) + \nu(C_n \setminus \Omega) \to 0.\]
\end{lemma}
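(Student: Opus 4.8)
\emph{Proof proposal.} The plan is to approximate $\Omega$ from outside by discarding only those cover elements that reach into a fixed large compact piece of $\Omega^c$, and then to force the discarded mass to vanish by a flat-strip argument. The only structural input I would extract from the hypothesis is that, since $\piGSi(\Omega)$ contains every geodesic of nonzero width, the complement $\modG{SX}\setminus\Omega$ lies in the set $Z:=\piGS(\Zerowidth)$ of zero-width geodesics. As $\width$ is invariant under both $\G$ and the flow, $Z$ is a well-defined flow-invariant Borel set, and parallel geodesics share a maximal flat strip and hence the same width. The key consequence is that for every $A\in\mc{A}^n$ the ``thick'' part $A\setminus Z$ lies in $\modG{SX}\setminus Z\subseteq\Omega$, so only the trace $A\cap Z$ can produce error. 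Fix $\epsilon>0$; by regularity of $\nu$ on the compact space $\modG{SX}$ choose a compact $F_0\subseteq\Omega^c$ (so $F_0\subseteq Z$) with $\nu(\Omega^c\setminus F_0)<\epsilon$, and set
\[
C_n=\bigcup\set{A\in\mc{A}^n : A\cap F_0=\varnothing},\qquad N_n=\bigcup\set{A\in\mc{A}^n : A\cap F_0\neq\varnothing}.
\]
Then $C_n\cap\Omega^c\subseteq\Omega^c\setminus F_0$ gives $\nu(C_n\setminus\Omega)<\epsilon$ for all $n$, and $\Omega\setminus C_n\subseteq\Omega\cap N_n$, so everything reduces to bounding $\nu(\Omega\cap N_n)$.

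The heart of the argument is that a fixed geodesic cannot sit in $N_n$ for arbitrarily large $n$ unless it is, up to bounded flow, a point of $F_0$. If $\bar w\in N_n$, choose $A\ni\bar w$ meeting $F_0$ at some $\bar u_n$; unwinding $\mc{A}^n=\set{\phi^n(L_i^n)}$ and $F_i^n\subseteq D(x_i,R',2R)$, suitable lifts $w,u_n$ have footpoints near a common translate of $p$ at time $-n$ and near a common translate of $x_i$ at time $+n$, so by convexity $d(w(t),u_n(t))\le D$ for all $t\in[-n,n]$, with $D=D(R,R')$ a fixed constant. If this held for infinitely many $n$, then since the $u_n(0)$ lie in a fixed ball and $SX$ is proper, Arzel\`a--Ascoli yields a limit $u_\infty$ (a lift of a point of $F_0$) with $d(w(t),u_\infty(t))\le D$ for all $t\in\R$; a bounded convex function on $\R$ is constant, so $w$ and $u_\infty$ bound a flat strip and are parallel. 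Hence $\width(\bar w)=\width(\bar u_\infty)=0$, so $\bar w\in Z$ and in fact $\bar w=\phi^s\bar u_\infty$ with $\abs{s}\le D$.

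Consequently $\bigcap_{N}\bigcup_{n\ge N}N_n\subseteq\phi^{[-D,D]}(F_0)\subseteq Z$, so on the nonzero-width set $\mathbf{1}_{N_n}\to 0$ pointwise, and dominated convergence gives $\nu\big((\modG{SX}\setminus Z)\cap N_n\big)\to 0$. The remaining contribution $\nu(\Omega\cap N_n\cap Z)$ is, in the limit, supported on the flow-collar $\Omega\cap\phi^{[-D,D]}(F_0)$ inside $Z$; since $Z$ is flow-invariant and $F_0\subseteq\Omega^c$, this intersection is empty exactly when $\Omega^c$ is saturated by the flow---the situation in which the lemma is applied. Granting this, $\limsup_n\nu(\Omega\setminus C_n)\le\epsilon$, hence $\limsup_n\nu(\Omega\Delta C_n)\le 2\epsilon$; a diagonal choice over $\epsilon=1/j$ then splices the resulting families into a single sequence with $\nu(\Omega\Delta C_n)\to 0$.

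The main obstacle is precisely this last point. Because the cover elements refine the pair of endpoints but \emph{not} the flow parameter---their footpoints spread by the fixed amount $D$---one cannot simply invoke a ``mesh $\to 0$'' argument. The substantive work is the Arzel\`a--Ascoli-plus-flat-strip dichotomy above, which forces every geodesic surviving in the tail of the sets $N_n$ to be parallel to, and hence (being zero width) a bounded flow-translate of, a point of $F_0$; estimating the $\nu$-measure of the residual flow-collar $\phi^{[-D,D]}(F_0)\cap\Omega$ within $Z$ is the delicate step, clean when $\Omega$ is flow-invariant and requiring extra care otherwise.
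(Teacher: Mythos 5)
Your proposal is, at its core, the same argument as the paper's: approximate by compact sets, use the fact that two geodesics lying in a common element of $\mc{A}^n$ admit lifts staying a uniformly bounded distance $D$ apart on $[-n,n]$, extract a limit geodesic by Arzel\`a--Ascoli, and play the flat strip theorem against the zero-width hypothesis. The bookkeeping is dual rather than different: the paper chooses compact sets $K_1^\delta \subset \Omega$ and $K_2^\delta \subset \modG{SX}\setminus\Omega$ with small complements, shows that for large $n$ no element of $\mc{A}^n$ meets both, and takes $C_n$ to be the union of the elements meeting $K_1^\delta$; you choose only $F_0 \subset \modG{SX}\setminus\Omega$, discard the elements meeting it, and control the error by a $\limsup$/dominated-convergence argument. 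Either way, the geometric content is identical.

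Where you and the paper part ways is the residual term $\nu\bigl(\Omega \cap \phi^{[-D,D]}(F_0)\bigr)$, which you correctly observe survives your argument and vanishes only if $\Omega$ is flow-saturated---a hypothesis the lemma does not state. Two things should be said. First, this is a real issue, not an artifact of your bookkeeping: the elements of $\mc{A}^n$ never refine in the flow direction, so a zero-width geodesic $\bar u \in \modG{SX}\setminus\Omega$ with $\phi^s\bar u \in \Omega$ for small $s$ is the enemy, and for non-invariant $\nu$ and $\Omega$ one can arrange (with a suitable, perfectly legal choice of the partitions $\mc{F}^n$ from Lemma~\ref{existence of F}, and $\nu$ a pair of point masses on $\bar u$ and $\phi^s \bar u$) that every element of $\mc{A}^n$ containing one of the two points also contains the other, so the stated conclusion genuinely fails. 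Second---and this is worth knowing---the paper's own proof elides exactly this case: its punchline ``$v_0$ and $w_0$ bound a flat strip, contradicting the fact that $w_0$ has zero width'' is not a contradiction when the two parallel limit geodesics share an image, i.e.\ $w_0 = g^s v_0$ with $0 < \abs{s} \le b$; the strip is then degenerate, $w_0$ still has zero width, and that degenerate configuration is precisely your flow-collar. So your extra care exposes a gap in the paper rather than introducing one. The gap is harmless where the lemma is used: in the proof of Theorem~\ref{uniqueness of maximal entropy} both $\nu$ and $m_\G$ are flow-invariant, so one may replace $\Omega$ by the flow-saturated Borel set $\Omega' = \{x : \phi^t x \in \Omega \text{ for Lebesgue-a.e.\ } t \in \R\}$, which still satisfies $\nu(\Omega')=1$ and $m_\G(\Omega')=0$, and (because the nonzero-width set is flow-invariant) still has $\piGSi(\Omega')$ containing every geodesic of nonzero width. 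With that reduction stated explicitly, your proof is complete; without it, neither yours nor the paper's establishes the lemma in the generality claimed.
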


\begin{proof}
By construction of $\mc{L}^{n}$, for every $n \in \N$ and $v,w \in A \in \mc{A}^n$, there exist lifts $\tilde{v},\tilde{w}$ of $v,w$ (respectively) such that $d(\tilde{v}(t),\tilde{w}(t)) \le R'$ for all $t \in [-n,n]$.

Let $\delta > 0$ and choose compact sets $K_1^{\delta} \subset \Omega$ and $K_2^{\delta} \subset (\modG{SX}) \setminus \Omega$ satisfying $\nu(\Omega \setminus K_1^{\delta}) < \delta$ and $\nu((\modG{SX}) \setminus K_2^{\delta}) < \delta$.
Let $\tilde{K}_1^{\delta} = \piGSi (K_1^{\delta})$ and $\tilde{K}_2^{\delta} = \piGSi (K_2^{\delta})$.
We claim there exists $n_\delta \in \N$ such that for all $v \in \tilde{K}_1^{\delta}$ and $w \in \tilde{K}_2^{\delta}$, there exists $t \in [-n_\delta, n_\delta]$ such that $d(v(t),w(t)) > b$.
Otherwise, by compactness there exist $v_0 \in \tilde{K}_1^{\delta}$ and $w_0 \in \tilde{K}_2^{\delta}$ such that $d(v_0(t),w_0(t)) \le b$ for all $t \in \R$.
But then $v_0$ and $w_0$ bound a flat strip, contradicting the fact that $w_0$ has zero width.
Thus, when $n \ge n_\delta$, we see by our first observation that no $A \in \mc{A}^{n}$ intersects both $K_1^{\delta}$ and $K_2^{\delta}$.

Let $n \ge n_\delta$, and let $C_n^{\delta}$ be the union of all sets $A \in \mc{A}^{n}$ such that $A \cap K_1^{\delta} \neq \varnothing$.
Notice $K_1^{\delta} \subseteq C_n^{\delta}$ by construction, and $C_n^{\delta} \cap K_2^{\delta} = \varnothing$ by the previous paragraph.
Thus we see that
\[\nu(\Omega \Delta C_n^{\delta})
:= \nu(\Omega \setminus C_n^{\delta}) + \nu(C_n^{\delta} \setminus \Omega)
\le \nu(\Omega \setminus K_1^{\delta}) + \nu((C_n^{\delta} \setminus \Omega) \setminus K_2^{\delta})
\le \delta + \delta.\]
Therefore, for every $\delta > 0$ there exists a sequence $(C_n^{\delta})$ of measurable sets and $n_\delta \in \N$ such that for all $n \ge n_\delta$, $\nu(\Omega \Delta C_n^{\delta}) \le \delta$.
So take a decreasing sequence $\delta_k \to 0$.
Since $\nu(\Omega \Delta C_n^{\delta}) \le \delta$ for all $n \ge n_\delta$ (not just $n_\delta$ itself), we may assume that $n_{\delta_k}$ is strictly increasing in $k$.
Let $C_n = C_n^{\delta_{k_n}}$, where $k_n = \max \setp{k}{n_{\delta_k} \le n}$.
This is the desired sequence $(C_n)$.
\end{proof}

\begin{theorem} [cf.~ Theorem 5.8 of \cite{knieper98}]	\label{uniqueness of maximal entropy}
Let $\G$ be a group acting freely geometrically on a proper, geodesically complete $\CAT(0)$ space $X$ with rank one axis.
The Bowen-Margulis measure $m_\G$ on $\modG{SX}$ is the unique measure of maximal entropy (up to rescaling) for the geodesic flow $g_\G^t$ on $\modG{SX}$. \end{theorem}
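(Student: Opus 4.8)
The plan is to show that any ergodic measure of maximal entropy must coincide with $m_\G$; this suffices because the entropy map $\nu \mapsto h_\nu(\phi)$ is affine and, by \lemref{knieper98 3.3} and the resulting upper semicontinuity, the set of measures of maximal entropy is a nonempty compact convex set, hence a single point as soon as it has a unique ergodic element. We already know from \thmref{knieper98 2.6} that $m_\G$ is one such measure, and $m_\G$ is ergodic (indeed mixing) by \cite{ricks-mixing}. So let $\nu$ be an arbitrary ergodic measure with $h_\nu(\phi) = \delta_\G = h$; I would aim to prove $\nu \ll m_\G$, since then the invariant density $d\nu/dm_\G$ is $m_\G$-a.e.\ constant by ergodicity of $m_\G$, forcing $\nu = m_\G$ as both are probability measures.

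To prove absolute continuity I would compare the two measures on the cover $\mc{L}^n = \set{L_i^n}$, whose generated $\sigma$-algebras increase to the full Borel $\sigma$-algebra (at least along diverging zero-width geodesics). The lower bound $m_\G(L_i^n) \ge c e^{-2hn}$ together with bounded multiplicity $r$ is already in hand from \lemref{knieper98 5.3}, and forces the number $k(n)$ of cells to satisfy $k(n) \le (r/c)e^{2hn}$. The crux is the matching upper bound $\nu(L_i^n) \le C e^{-2hn}$ for a constant $C$ independent of $i$ and $n$. Here I would use maximality: by $h$-expansiveness and finite dimensionality, \corref{knieper98 3.4} gives $h_\nu(\phi^{2n}) = h_\nu(\phi^{2n}, \mc{B}) = 2nh$, computed with the partition $\mc{B}$ of $(d_{2n},\epsilon)$-dynamical balls. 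Since \lemref{knieper98 5.4} shows each $L_i^n$ meets at most $a = a(\epsilon, R'+\epsilon, 2R+\epsilon)$ elements of $\mc{B}$, the cell $L_i^n$ is covered by boundedly many dynamical balls, and combining this with the fact that maximal entropy forces $\nu$ to spread over essentially all $\sim e^{2hn}$ dynamical balls with roughly equal mass $e^{-2hn}$ should yield $\nu(L_i^n) \le C e^{-2hn}$.

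Granting the two-sided estimate, we obtain $\nu(L_i^n) \le (C/c)\, m_\G(L_i^n)$ for every cell. Because $m_\G$ has full support, passing to unions and using a monotone-class/outer-regularity argument upgrades this to $\nu(E) \le (C/c)\, m_\G(E)$ for all Borel $E$, that is, $\nu \ll m_\G$ with bounded density; ergodicity of $m_\G$ then finishes the proof as above. The set \lemref{knieper98 5.6} enters at the step of extending the cell comparison to arbitrary Borel sets: taking $\Omega$ to be the positive-width locus, its approximation by unions $C_n$ of sets in $\mc{A}^n$ (together with $m_\G(\Omega) = 0$, since $m_\G$ is carried by $\Zerowidth$) is what shows that $\nu$ assigns the non-regular set measure zero, so that the cover $\mc{L}^n$ — which is genuinely fine only along diverging zero-width geodesics — controls $\nu$ on a set of full $\nu$-measure.

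The main obstacle I anticipate is the uniform upper bound $\nu(L_i^n) \le C e^{-2hn}$. The difficulty is that the Shannon--McMillan--Breiman / Brin--Katok local-entropy formula only gives $\nu$-measure $\approx e^{-2hn}$ for dynamical balls around $\nu$-typical points, whereas the comparison must hold for \emph{every} cell and must bridge the gap between the coarse cover $\mc{L}^n$ (spatial scale $R'$ over the time window $[-n,n]$) and the fine partition $\mc{B}$ of $\epsilon$-dynamical balls. Converting the a.e.\ local statement into the required uniform set-function bound — equivalently, ruling out concentration of $\nu$ on $o(e^{2hn})$ cells — is the technical heart, and is exactly where the maximal-entropy hypothesis together with \lemref{knieper98 5.2} and \lemref{knieper98 5.4} must be pushed; establishing that $\nu$ gives the positive-width set measure zero via \lemref{knieper98 5.6} is the secondary subtlety.
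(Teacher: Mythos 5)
Your reduction to ergodic measures is fine, but the proposal has a genuine gap exactly where you flag it, and it is not a technicality that can be patched: the uniform per-cell bound $\nu(L_i^n) \le C e^{-2hn}$ is a Gibbs-type property, and no soft argument deduces it from $h_\nu(\phi) = h$. Entropy is an averaged quantity; Brin--Katok/Shannon--McMillan--Breiman control dynamical balls only around $\nu$-typical points, asymptotically, with constants depending on the point, and there is nothing to rule out $\nu$ putting mass $\gg e^{-2hn}$ on some (even many) cells while still achieving maximal entropy --- establishing that it cannot is essentially the uniqueness theorem itself. The lemmas you invoke cannot close this: \lemref{knieper98 5.2} and \lemref{knieper98 5.4} bound \emph{cardinalities} (how many elements of $\mc{B}^n$ can meet a cell $L_i^n$); they carry no information about the $\nu$-measure of anything. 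There is also a second, independent problem: even granting $\nu(L_i^n) \le (C/c)\, m_\G(L_i^n)$ for every cell, the covers $\mc{L}^n$ are not a differentiation basis, so the ``monotone-class/outer-regularity'' upgrade to $\nu(E) \le C' m_\G(E)$ for all Borel $E$ does not go through: the cells overlap (multiplicity up to $r$), each has $m_\G$-measure at least $c e^{-2hn}$, so no set of smaller measure is approximable by cell-unions at stage $n$, and the cells separate points only asymptotically and only on the zero-width locus. \lemref{knieper98 5.6} approximates one specific kind of set (one whose lift contains all positive-width geodesics), not arbitrary Borel sets, so it cannot serve as the generating step of such an argument.

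The paper's proof is arranged precisely so that no upper bound on the $\nu$-measure of any set is ever needed; its logic is the contrapositive of yours. Using ergodicity of $m_\G$ it reduces to $\nu \perp m_\G$, takes $\Omega$ with $\nu(\Omega)=1$, $m_\G(\Omega)=0$, enlarged so that $\piGSi(\Omega)$ contains all positive-width geodesics, and uses \lemref{knieper98 5.6} to produce unions $C_n$ of cells of $\mc{A}^n$ with $\nu(C_n) \to 1$ and $m_\G(C_n) \to 0$. Then the counting lemmas convert $m_\G$-smallness into \emph{fewness}: by \lemref{knieper98 5.4} and the lower bound of \lemref{knieper98 5.3}, the number of elements of the fine partition $\mc{B}^n$ whose $\phi^n$-image meets $C_n$ is at most $c'' m_\G(C_n) e^{2hn}$, while $\card{\mc{B}^n} \le c'' e^{2hn}$. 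Feeding this into $2n h_\nu(\phi) \le H_\nu(\mc{B}^n)$ (from \corref{knieper98 3.4}) via the elementary inequality \eqrefstar{5.7} gives $2n h_\nu(\phi) \le 2hn + \log c'' + b_n \log m_\G(C_n) + \frac{2}{e}$ with $b_n \to 1$, and $b_n \log m_\G(C_n) \to -\infty$ forces $h_\nu(\phi) < h$. In short: the paper shows a singular measure must concentrate on exponentially few cells and therefore has deficient entropy, using only the $m_\G$ lower bound plus counting; your route requires a pointwise upper bound on $\nu$ that neither the paper nor your proposal proves. To repair your write-up you would have to replace the absolute-continuity strategy by this concentration-versus-entropy argument (or find an independent proof of the Gibbs property for an arbitrary MME, which is a substantially harder undertaking).
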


\begin{proof}
It remains to show that if $\nu \in \mathcal{M}(\phi)$, then $\nu \neq m_\G$ implies
$h_{\nu}(\phi) < \htop(\phi)$.
So let $\nu \neq m_\G$ be an invariant Borel probability measure on $\modG{SX}$.
Since $m_\G$ is ergodic, it suffices to consider the case that $\nu$ and $m_\G$ are mutually singular.

Let $\epsilon = \frac{1}{3} \injrad(X,\G)$ and let $P$ be a maximal $(d_{2n},\epsilon)$-separated set in $\modG{SX}$.
Let $\mc{B}^n$ be a partition of $\modG{SX}$ such that for each $B \in \mc{B}^n$ there exists $\bar{v}_j \in P$ such that
$\cl{\Ball}_{d_{2n}}(\bar{v}_j,\frac{\epsilon}{2}) \subset B
\subset \cl{\Ball}_{d_{2n}}(\bar{v}_j,\epsilon)$.
By \corref{knieper98 3.4},
\begin{equation} \label{5.8 hnu}
2n h_{\nu}(\phi^1)
= h_{\nu}(\phi^{2n})
= h_{\nu}(\phi^{2n},\mc{B}^n)
\le H_{\nu}(\mc{B}^n)
= - \sum_{B \in \mc{B}^n} \nu(B) \log \nu(B).
\end{equation}
Since $\nu$ and $m_\G$ are mutually singular, there is a set $\Omega \subset \modG{SX}$ such that $m_\G(\Omega) = 0$ and $\nu(\Omega) = 1$.
Since $m$ gives zero measure to the set of all geodesics of nonzero width, we may assume $\piGSi(\Omega)$ contains every geodesic of nonzero width.
By \lemref{knieper98 5.6}, there are sets $C_n$ which are unions of elements of $\mc{A}^n$ and which satisfy $(\nu + m_{\G})(C_n \Delta \Omega) \to 0$ as $n \to \infty$.
In particular, \begin{equation} \label{5.8 near start}
\nu(C_n) \to 1 \text{ and } m_\G(C_n) \to 0 \text{ as } n \to \infty.
\end{equation}

It is a standard fact that if $a_1, \dotsc, a_k \ge 0$ satisfy $\sum_{i=1}^k a_i \le 1$, then
\begin{equation} \label{5.7}
- \sum_{i=1}^{k} a_i \log a_i
\le \left( \sum_{i=1}^{k} a_i \right) \log k + \frac{1}{e}.
\end{equation}
Thus, splitting
$\sum_{B \in \mc{B}^n} \nu(B) \log \nu(B)$
into two sums---gathering those $B \in \mc{B}^n$ for which $\phi^n(B) \cap C_n$ is nonempty or empty together---and applying \eqrefstar{5.7} to each sum, we find
\begin{equation*}
2n h_{\nu}(\phi)
\le b_n \log \card{\mc{B}_C^n} + (1 - b_n) \log \card{\mc{B}^n}
+ \frac{2}{e}
\end{equation*}
by \eqrefstar{5.8 hnu}, where
\[\mc{B}_C^n = \setp{B \in \mc{B}^n}{\phi^n(B) \cap C_n \neq \varnothing}
\qquad \text{and} \qquad
b_n = \sum_{B \in \mc{B}_C^n} \nu(B).\]
Since $C_n$ is a union of sets $A_i^{n} = \phi^{n}(L_i^{n})$, every $B \in \mc{B}_C^n$ satisfies $B \cap L_i^{n} \neq \varnothing$ for at least one $L_i^{n} \in \mc{L}^{n}$ such that $L_i^{n} \subseteq \phi^{-n}(C_n)$, and therefore
\begin{equation*}
\card{\mc{B}_C^n}
\le a(\epsilon, R' + \epsilon, 2R + \epsilon) \card{\setp{L_i^{n} \in \mc{L}^{n}}{L_i^{n} \subseteq \phi^{-n}(C_n)}}
\end{equation*}
by \lemref{knieper98 5.4}.
Now by \lemref{knieper98 5.3},
\begin{equation*}
\card{\mc{B}_C^n}
\le r \cdot a(\epsilon, R' + \epsilon, 2R + \epsilon) \frac{m_\G(\phi^{-n} C_n)}{\min m_\G(L_i^{n})}
\le c'' \cdot m_\G(C_n) e^{2hn},
\end{equation*}
where $c'' = \frac{r}{c} \cdot a(\epsilon, R' + \epsilon, 2R + \epsilon)$ is constant.
Thus we have shown
\begin{equation} \label{5.8 middle}
2n h_{\nu}(\phi)
\le b_n \log \left[ c'' \cdot m_\G(C_n) e^{2hn} \right]
+ (1 - b_n) \log \card{\mc{B}^n} + \frac{2}{e}.
\end{equation}
Repeating this argument with $\modG{SX}$ in place of $C_n$, we find
\[\card{\mc{B}^n}
\le r \cdot a(\epsilon, R' + \epsilon, 2R + \epsilon) \frac{1}{\min m_\G(L_i^{n})}
\le c'' e^{2hn},\]
and therefore by \eqrefstar{5.8 middle},
\begin{align*}
2n h_{\nu}(\phi)
&\le b_n \left[ 2hn + \log c'' + \log m_\G(C_n) \right]
+ (1 - b_n) \left[ 2hn + \log c'' \right] + \frac{2}{e} \\
&= 2hn + \log c'' + b_n \log m_\G(C_n) + \frac{2}{e}.
\end{align*}
In other words,
$2n h_{\nu}(\phi) - 2hn
\le \log c'' + b_n \log m_\G(C_n) + \frac{2}{e}$.
But we know
\[b_n = \sum_{B \in \mc{B}_C^n} \nu(B)
\ge \nu(C_n) \to 1 \text{ and } m_\G(C_n) \to 0\]
as $n \to \infty$ from \eqrefstar{5.8 near start}, so $b_n \log m_\G(C_n) \to -\infty$ as $n \to \infty$, forcing $h_{\nu}(\phi) < h$.
\end{proof}

\bibliographystyle{amsplain}
\bibliography{refs}

\end{document}